\documentclass[11pt]{amsart}
\usepackage{array, amsmath, amscd, amssymb, amsthm, mathrsfs, mathtools}
\usepackage{hyperref, xy}

\newcommand{\cYn}{\cY^{[n]}}

\newcommand{\Hilb}[2]{{#1}^{[#2]}}

\newcommand{\cEn}{\cE^{[n]}}

\newcommand{\Gott}{G\"{o}ttsche}

\newcommand{\Pand}{Pandharipande}

\newcommand{\fourtop}{L^2, LK, c_1(S)^2, c_2(S)}

\newcommand{\sch}{\mathbf{Sch}_k}
\newcommand{\sm}{\mathbf{Sm}_k}

\newcommand{\codim}{\text{codim}}
\newcommand{\Spec}{\text{Spec}\,}
\newcommand{\ri}{r_1, \ldots, r_k}

\theoremstyle{plain}

\newtheorem{prop}{Proposition}[section]
\newtheorem{theo}[prop]{Theorem}
\newtheorem{lemm}[prop]{Lemma}
\newtheorem{exam}[prop]{Example}
\newtheorem{assu}[prop]{Assumption}
\newtheorem{coro}[prop]{Corollary}

\theoremstyle{definition}
\newtheorem{defn}{Definition}[section]

\newtheorem{eg}{Example}[section]

\theoremstyle{remark}
\newtheorem*{rem}{Remark}




\newcommand{\CC}{\mathbb{C}}

\newcommand{\FF}{\mathbb{F}}
\newcommand{\LL}{\mathbb{L}}

\newcommand{\NN}{\mathbb{N}}
\newcommand{\PP}{\mathbb{P}}
\newcommand{\QQ}{\mathbb{Q}}

\newcommand{\ZZ}{\mathbb{Z}}


\newcommand{\bM}{\mathbf{M}}


\def\cA{{\mathcal A}}

\def\cC{{\mathcal C}}
\def\cD{{\mathcal D}}
\def\cE{{\mathcal E}}

\def\cM{{\mathcal M}}
\def\cN{{\mathcal N}}
\def\cO{{\mathcal O}}
\def\cP{{\mathcal P}}
\def\cQ{{\mathcal Q}}
\def\cR{{\mathcal R}}

\def\cV{{\mathcal V}}

\def\cY{{\mathcal Y}}


\def\fm{\mathfrak{m}}

\let\oldTitle\title
\renewcommand{\title}[1]{\newcommand{\myTitle}{#1}\oldTitle{#1}} 
\title{Enumeration of singular varieties with tangency conditions}

\setlength{\textwidth}{5.8in}
\setlength{\textheight}{8.3in}
\setlength{\oddsidemargin}{.15in}
\setlength{\evensidemargin}{.15in}
\input xy
\xyoption {all}
\numberwithin{equation}{section}
\begin{document}

\author{Yu-jong Tzeng}
\email{yujong@gmail.com}

\keywords{curve counting, singular subvarieties, tangency conditions, universal polynomial, \Gott's conjecture}
\thanks{The author is supported by NSF grant DMS-1503621.}

\begin{abstract}  
We construct the algebraic cobordism theory of  bundles and  divisors on smooth varieties. 
It has a simple basis (over $\QQ$) from projective spaces and its rank is equal to the number of Chern invariants. 
As an application we study the number of singular subvarieties satisfying given tangent conditions with a fixed smooth divisor, where the subvariety is the zero locus of a section of a vector bundle. 
We prove that the generating series  gives a homomorphism from the algebraic cobordism group to the power series ring. 
This implies that the number of singular subvarieties with tangency conditions is governed by universal polynomials of Chern numbers, when the vector bundle is sufficiently ample. 
This result combines and generalizes the Caporaso-Harris and Vakil's recursive formulae, \Gott's conjecture,  De Jonqui\`ere's Formula and relative node polynomials from tropical geometry.
  
\end{abstract}
\maketitle

\section{Introduction}
\subsection{History of the problem}
The number of (possibly reduced)  $\delta$-nodal degree $d$  curves  passing through general $\frac{d(d+3)}{2}-\delta$ points on the projective plane is classically known as the Severi degree $N^{d,\delta}$.
They remained unknown for a long time  until Caporaso and Harris  \cite{CH} found a recursive formula to determine all of them.
This recursive formula contains not only the Severi degrees but also  the number of $\delta$-nodal curves satisfying tangency conditions at assigned points (indexed by $\alpha$) and unassigned points (indexed by $\beta$), which is called the Caporaso-Harris invariants $N^{d, \delta}(\alpha, \beta)$. 

Di Francesco and C. Itzykson \cite{FI} conjectured that for any fixed $\delta$, the Severi degrees $N^{d, \delta}$ are given by a polynomial in $d$ if  $d$  is large enough. 
This conjecture was proven by Fomin and Mikhalkin \cite{FM} in 2009 using  tropical geometry to reduce the problem to  counting certain combinatorial diagrams.   
Block \cite{BlockComputing} developed an algorithm to computed the polynomial and gave the result for $\delta \leq 14$.  Furthermore, Adila and Block \cite{ArdilaBlock} proved the polynomial property for Severi degrees holds for more general tropical surfaces, some of them even singular. 

For sufficiently ample line bundles $L$ on smooth surfaces $S$, \Gott\ \cite{Gott} conjectured that the number of $\delta$-nodal curves in the linear system $|L|$ is given by a universal polynomial of degree $\delta$ in Chern invariants $L^2$, $LK_S$, $c_1(S)^2$ and $c_2(S)$. The case of $\delta\leq 3 $ can be computed directly by standard intersection theory and was known in the 19th century. 
In 1994, Vainsencher  \cite{Va} proved and computed the universal polynomials for $\delta\leq 6$ and  later Kleiman and Piene \cite{KP} improved the result to $\delta\leq 8$. 
In 2000, A.K. Liu \cite{Liu1, Liu2} proposed a proof of existence of universal polynomials for all $\delta$ from the perspective of  sympletic geometry.
 
The first complete proof of \Gott's conjecture was found by the author \cite{Tz} by viewing the number of nodal curves as invariants of the algebraic cobordism group of surfaces and line bundles.
Shortly after, Kool, Shende and Thomas \cite{KST} also gave another beautiful proof. 
Gottsche's conjecture has been generalized to hypersurfaces with arbitrary analytic singularities \cite{LT, Renn}. 

In this paper, we will complete the picture by defining a natural generalization of Caporaso-Harris invariants to count singular varieties on any smooth algebraic varieties satisfying given tangency conditions with a smooth divisor. 
The singularities and tangent points are required to be isolated.
When ampleness conditions are satisfied, these invariants should ``satisfy \Gott's conjecture'', i.e. they are also given by universal polynomials and generating series has similar features. 
The reason behind the theme is algebraic cobordism theory and we will illustrate the connection between these two topics. 

\subsection{Tangency conditions and Non-induced singular type}\label{sect:non-induced}
On surfaces, if two curves do not have common components then their intersection types at smooth points can be completely classified by the intersection multiplicities. 
Therefore the tangent conditions of a plane curve with a fixed line can be recorded by two sequences of nonnegative integers  $\alpha=(\alpha_1, \alpha_2,\ldots)$ and  $\beta=(\beta_1, \beta_2,\ldots)$, which indicate intersection multiplicity $i$ at $\alpha_i$ assigned points and $\beta_i$ unassigned points. 
The  numbers of  $\delta$-nodal degree $d$  curves  having tangency type $(\alpha, \beta)$ with a fixed line are  Caporaso-Harris invariants $N^{d, \delta}(\alpha, \beta)$,

Caporaso and Harris' recursive formula \cite{CH} completely determines all Caporaso-Harris invariants, but it is hard to see other  structures from it. 
Inspired by Fomin and Mikhalkin's methods, Block \cite{Blockrel} showed that  $N^{d, \delta}(\alpha, \beta)$ are polynomials in the components of $\alpha$ and $\beta$ multiplied by some  natural coefficients if $\sum \beta_i \geq \delta$. They are called relative node polynomials and Block gave the formula when $\delta\leq 6$. 

In higher dimensions, the tangency conditions of a subvariety with a divisor are more complicated.
We only study the case when the intersection has expected dimension and all singular and tangent points are isolated.  
For example, our discussion does not include  a sphere touching a plane only at a point, or the parabolic cylinder $V(z=x^2)$ intersecting the $xy$-plane at a double line in $3$-space, because their intersection or tangent points does not have expected dimension. 
 
If the subvariety is the vanishing locus of sections of vector bundles and its intersection with the divisor  has isolated singularities, then the singularities must be isolated complete intersection singularities (ICIS). 
For this reason, the  tangency conditions imposed  in this article  are collections of ICIS. 
The tangency conditions are recorded by two finite collections of ICIS:  $\alpha$ (at assigned points) and $\beta$ (at unassigned points). 
Caporaso-Harris invariants can be recovered by imposing suitable ICIS. 

The subvariety may be tangent to the divisor at its singular points. 
In this case we should not view the subvariety as satisfying both the singularity and tangency conditions because the expected dimension is wrong. 
The following definitions  will help to avoid this ambiguity.  
\begin{defn} Let $X$ be a subvariety and $D$ be a smooth divisor of a variety $Y$. We say $X$ is tangent to $D$ at $p$ if the scheme-theoretic intersection $X\cap D$ is singular at $p$.
The \textit{tangency type} of $X$ with $D$ is the singularity type of $X\cap D$. 
\end{defn}

\begin{defn}  Let $X$ be a subvariety and $D$ be a smooth divisor of a variety $Y$.
 A \textit{non-induced singularity point} of $X$ is a singular point of $X$ which is not on $D$.   The \textit{non-induced singular type} of $X$ is the collection of singularity types of $X$ at its non-induced singular points.  
\end{defn}

If $X$ is singular at $p\in X\cap D$ and $X\cap D$ has codimension $1$ in $X$. Then a computation of tangent spaces shows $X\cap D$ must be singular at $p$. 
As a result,  $X$ is always tangent to $D$ at singular points of $X$, if the points lie on $D$.
Therefore a singular point of $X$ is either  non-induced or $X$ is tangent to $D$ at this point.

\subsection{Main results} 

If  $\delta$ is a  collection of ICIS,  denote the number of elements in $\delta$  by $|\delta|$.
$N(\delta)$ and $\tau(\delta)$ are invariants of $\delta$ and will be defined in Section \ref{sect:notations}.  
A vector bundle $E$ on $Y$ is \textit{$k$-very ample} if for every zero-dimensional scheme $Z$ of length $k+1$ in $Y$, the natural restriction map $H^0(E)\to H^0(E\otimes \cO_Z)$ is surjective. 

\begin{theo}\label{thm:univ}
	For any positive integers $n$, $r$, let  $\alpha$, $\beta$ denote collections of dimension $n-r-1$ ICIS and  $\delta$ denote a collection of dimension $n-r$ ICIS.  
	For fixed $\beta$ and $\delta$, there is a degree $|\beta|+|\delta|$ universal polynomial  $T_{\beta, \delta}$  satisfying the following property. 
	For any smooth variety $Y$ of dimension $n$, $\alpha$, an $(N(\alpha)+N(\beta)+N(\delta)+n-r+1)$-very ample vector bundle $E$ of rank $r$ and  a smooth divisor $D$  on $Y$,  the number of subvarieties in a general $(\tau(\delta)+(n-1)|\alpha|+\tau(\alpha)+\tau(\beta))$-dimensional  linear subspace  $\PP(V)\subset \PP^0(H^0(E))$ with non-induced singularity type $\delta$ and tangency type $(\alpha,\beta)$ with  $D$ is given by the polynomial $T_{\beta, \delta}$ in the Chern numbers of $Y$, $D$, and $E$ and the multiplicities of ICIS in $\alpha$. 

\end{theo}
\begin{rem} In the case of  hypersurfaces, if $X$ is defined by $f(x_1, x_2,\dots, x_n)$ and  $p$ is a given point on $D := (x_1=0)$ near $p$, then 
$X$ is tangent to $D$ at $p$ means $f(p)=\frac{\partial f}{\partial x_i}(p)=0$ for $i = 2,\ldots, n$.  
The conditions for $p$ to be a singular point of $X$ is $f(p)=\frac{\partial f}{\partial x_i}(p)=0$ for $i = 1,\ldots, n$.  
Since the codimension of the former is $n$ and the latter is $n+1$, 
in the generic case (which can be guaranteed if the line bundle is sufficiently ample), we can drop the ``non-induced'' assumption in the theorem. 
In other words, the subvarieties enumerated by Theorem \ref{thm:univ} generically is tangent to $D$ at smooth points of $X$. 

 \end{rem}
 
Since the degrees of universal polynomials are known, their coefficients can be computed by special cases. 
Consider the generating series 
$$ T_{\alpha}(Y,D,E) = \sum_{\beta, \delta} T_{\alpha, \beta, \delta}(\text{Chern numbers of } Y, D, E) y_{\beta}z_{\delta}.$$ 

The following theorem gives the structure of the generating series. 
It induces many relations between universal polynomials. 

\begin{theo}
	If $\{\Theta_1, \Theta_2,\ldots,  \Theta_m \}$ forms a basis of the finite-dimensional $\QQ$-vector space of graded degree $n$ polynomials in the Chern classes of 
		$$\{c_i(T_Y)\}_{i=0}^{n}, c_1(\cO(D)),\{c_i(E)\}_{i=0}^{r}.$$
	Then there exist  power series $A_j$ and $P_{\alpha_i}$ in  $\QQ[[y_{\beta}, z_{\delta}]]$ such that the generating series has the form
\begin{align*}T_{\alpha}(Y,D,E)
= \prod_{j=1}^m A_j^{\Theta_j(c_i(T_Y), c_1(D), c_i(E) )}\cdot \prod_{\alpha_i \in \alpha} P_{\alpha_i}. \end{align*}

 \end{theo}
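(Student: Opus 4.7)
The plan is to view the assignment $(Y,D,E)\mapsto T_\varnothing(Y,D,E)$ as a group homomorphism from the algebraic cobordism group $\SL$ constructed in the paper into the multiplicative group $\QQ[[y_\beta,z_\delta]]^\times$, and then expand its logarithm in the basis of Chern-number functionals on $\SL\otimes\QQ$.

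First I would check multiplicativity under disjoint union: if $(Y,D,E)=(Y_1,D_1,E_1)\sqcup(Y_2,D_2,E_2)$, then a section of $E$ is a pair of sections of $E_1$ and $E_2$ whose zero locus splits accordingly, and the tangency type $\beta$ and non-induced singularity type $\delta$ distribute over the two components. Tracking how a general linear subspace $V\subset \PP(H^0(E))$ of the prescribed dimension decomposes across the two factors yields
\[
T_\varnothing(Y,D,E) \;=\; T_\varnothing(Y_1,D_1,E_1)\cdot T_\varnothing(Y_2,D_2,E_2).
\]
Combined with Theorem \ref{thm:univ} — which asserts that each coefficient of $T_\varnothing$ is a universal polynomial in the Chern numbers of $Y$, $D$, $E$ whenever $E$ is sufficiently ample — and with the fact that $\SL\otimes\QQ$ is generated by classes representable by sufficiently ample triples, this promotes $T_\varnothing$ to a well-defined homomorphism $\SL\otimes\QQ\to \QQ[[y_\beta,z_\delta]]^\times$.

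Next, the coefficient of $y_\varnothing z_\varnothing$ in $T_\varnothing(Y,D,E)$ equals $1$, since for $\beta=\delta=\varnothing$ the zero-dimensional linear system $V$ picks out a single subvariety intersecting $D$ transversally and smooth off $D$. Hence $T_\varnothing(Y,D,E)\in 1+(y_\beta,z_\delta)\,\QQ[[y_\beta,z_\delta]]$, and $\log T_\varnothing$ is well defined and $\QQ$-linear on $\SL\otimes\QQ$. By the rank property of this cobordism theory, the Chern-number evaluations $\Theta_1,\ldots,\Theta_m$ form a $\QQ$-basis of $(\SL\otimes\QQ)^{\vee}$. Expanding the linear functional $\log T_\varnothing$ in this basis produces power series $B_j\in\QQ[[y_\beta,z_\delta]]$ with
\[
\log T_\varnothing(Y,D,E) \;=\; \sum_{j=1}^m B_j\,\Theta_j\bigl(c_i(T_Y),\, c_1(D),\, c_i(E)\bigr);
\]
setting $A_j := \exp(B_j)$ and exponentiating gives the asserted factorization.

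The main obstacle is the interplay between multiplicativity and the sufficient-ampleness hypothesis in Theorem \ref{thm:univ}: the disjoint union of two sufficiently ample triples may not itself satisfy the ampleness threshold required by the theorem, so one must verify that twisting to achieve sufficient ampleness can be absorbed into cobordism relations without altering $T_\varnothing$, and that the bookkeeping of how a general $V\subset\PP(H^0(E_1\oplus E_2))$ restricts to generic linear subspaces on each factor is consistent with the product formula. Once these compatibilities are established, the factorization is a formal consequence of the duality between the Chern-number functionals and the cobordism group.
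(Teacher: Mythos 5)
There is a genuine gap, and it is twofold. First, your argument is circular relative to this paper's logic: you invoke Theorem \ref{thm:univ} (existence of universal polynomials in Chern numbers) as an input to promote $T_{\varnothing}$ to a homomorphism, but in the paper Theorem \ref{thm:univ} is itself deduced \emph{from} the statement you are asked to prove (its proof is literally a binomial expansion of the product formula \eqref{eq:Ai} combined with Proposition \ref{prop:daL}). You cannot assume universality of the coefficients of $T_{\varnothing}$ here; establishing that the coefficients depend only on Chern numbers is exactly the content at stake. Second, and more fundamentally, multiplicativity under disjoint union is not what makes $T_{\varnothing}$ descend to the cobordism group. The group $\nu_{n,1,r}$ (note: the relevant group is the divisor-and-bundle group $\nu_{n,1,r}$, not $\omega_{2,1}$) is the group completion of a monoid modulo \emph{double point relations}, so a homomorphism to $\QQ[[y_{\beta},z_{\delta}]]^{\times}$ requires the identity $T_{\varnothing}(Y_0,D_0,E_0)\,T_{\varnothing}(Y_3,D_3,E_3)=T_{\varnothing}(Y_1,D_1,E_1)\,T_{\varnothing}(Y_2,D_2,E_2)$ for every double point degeneration. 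This is the degeneration formula (Proposition \ref{deg formula}, resting on Corollary \ref{d=dd2}, the Li--Wu relative Hilbert schemes, the flat $1$-cycle of Lemma \ref{1-cycle}, and the trick of comparing four families obtained by blowing up $Y_i\times\PP^1$ along $B\times\{\infty\}$ to eliminate the relative terms); it is the technical heart of the proof and your proposal omits it entirely. Disjoint-union multiplicativity is essentially automatic and carries none of this content.

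A further point: you treat $T_{\varnothing}$ via its enumerative description (counts of subvarieties cut out by a general linear system), which only makes sense under the ampleness hypothesis, and then worry about preserving ampleness under disjoint union. The paper avoids this entirely by \emph{defining} $d_{\alpha,\beta,\delta}(Y,D,E)$ as the intersection number $c_{rN-c}(E^{[N]})\cap[Y_{\Omega}(\alpha,\beta,\delta)]$ on the Hilbert scheme for arbitrary triples; ampleness enters only in Proposition \ref{prop:daL} to give these numbers their enumerative meaning. With that definition the homomorphism statement is unconditional, and the theorem follows as in the paper: the degeneration formula shows $T_{\varnothing}$ factors through $\nu_{n,1,r}$, Corollary \ref{vdim} identifies $\nu_{n,1,r}\otimes\QQ$ with the Chern-number space spanned dually by $\Theta_1,\ldots,\Theta_m$, and linearity of $\log T_{\varnothing}$ in that basis gives the product form. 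Your final step (expanding $\log T_{\varnothing}$ in the dual basis and exponentiating) is fine, but only after the descent to the cobordism group has actually been proved.
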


\subsection{Outline of the paper}
This paper is divided into two parts. 
The first part discusses algebraic cobordism theory.
First we give a review of algebraic cobordism theory of bundles on varieties and  generalize it to arbitrary number of bundles  in Section \ref{sect:algbundles}.  
The algebraic cobordism theory of bundles and divisors on varieties is constructed in Section \ref{sect:algcobordism}, where we prove that there is a natural isomorphism between these two algebraic cobordism theories and the isomorphism gives us a natural basis. 
The second part is the enumeration of singular varieties  satisfying prescribed tangency conditions with a divisor. 
We begin with a brief review about isolated complete intersection singularities and  notations in Section \ref{sect:notations}. 
In Section \ref{sect:deg}, the number of singular varieties is expressed as an intersection number on Hilbert schemes of points and two degeneration formulae are derived.  
The degeneration formulae combined with the structure of algebraic cobordism group lead us to the proof of main theorems in Section \ref{sec:univ}. 
In the last section, we provide a list of current developments of the enumeration of singular varieties with tangency conditions. 

\subsection{Acknowledgments} I am indebted to  Dawei Chen and Jun Li for convincing me in the beginning that the construction in this paper is possible. 
I am grateful for Xiaowen Hu for pointing out an issue in the earlier version of the article and Rahul Pandharipande for explaining the details of \cite[Proposition 5]{LeeP}. 
I also wish to express my gratitude to Lawrence Ein,  Y.P. Lee, Jason Lo,  Ravi Vakil, Baosen Wu, and Ziyu Zhang for helpful discussions and encouragement.  
This project is supported by NSF grant DMS-1503621 and most of the work is done when the author is at University of Minnesota.


\section{Algebraic cobordism of bundles on varieties}\label{sect:algbundles}
\subsection{Algebraic cobordism ring of varieties}\label{var}


In this section, we will study the algebraic cobordism theory of arbitrary number of bundles on varieties.
First we give  an exposition of the algebraic cobordism theory of varieties in \cite{LP} and the algebraic cobordism theory of a pair of variety and bundle in \cite{LeeP}. 
Then we show how the approach in \cite{LeeP} can be slightly generalized to the case of arbitrary number of bundles. 

Let $k$ be a field of characteristic $0$, $\mathbf{Sch}_k$ be the category of separated schemes of finite type over $k$, and  $\mathbf{Sm}_k$ be the full subcategory of smooth quasi-projective $k$-schemes. 

For $X\in \sch$, let $\cM(X)$ be the set of  the isomorphism classes over $X$ of projective morphisms $f: M \to X$ with $M\in \sm$. The set $\cM(X)$ is graded by the dimension of $M$ and it has the structure of a monoid under  disjoint of domains. Let $\cM_*(X)^+$ be the graded group completion of $\cM(X)$ and $[f:M\to X]$ be the element in  $\cM_*(X)^+$ for $f$. 

Consider projective morphisms 
$$\pi: Y\to X\times \PP^1$$ and composition $$\pi_2= p_2\circ \pi: Y\to \PP^1\,$$ where $p_2:X\times \PP^1\to \PP^1$ is the projection to the second factor, $Y\in \sm$ and $X\in\sch$.
We say $\pi_2$ is a \textit{double point degeneration}\footnote{The point $0$ can be replaced by any regular value of $\pi$.} over $\infty\in \PP^1$ if   $Y_0:=\pi_2^{-1}(0)$ is a smooth fiber  over $0$, and $\pi_2^{-1}(\infty)$ can be written as $Y_1\cup Y_2$ where $Y_1$ and $Y_2$ are smooth Cartier divisors of $Y$ intersecting transversally along a smooth divisor $B=Y_1\cap Y_2$. 
Let $N_{Y_1/B}$ and $N_{Y_2/B}$ denote the normal bundles of $B$ in $Y_1$ and $Y_2$ respectively and  $$Y_3:=\PP(\cO_B\otimes N_{Y_1/B})\cong \PP( N_{Y_2/B}\otimes\cO_B).$$ be a  $\PP^1$ bundle over $B$. 
Then the \textit{double point relation} over $X$ defined by $\pi$ is  
\begin{align*} 
[Y_0\to X]-[Y_1\to X]-[Y_2\to X]+[Y_3\to X]\,. \end{align*}
Denote the subgroup of $\cM_*(X)^+$ generated by all double point relations over $X$ by $\cR_*(X)$. 
The \textit{algebraic cobordism theory} $\omega_*$ is defined by $$\omega_*(X):=\cM_*(X)^+/\cR_*(X).$$ Since the double point relation is homogeneous, $\omega_*(X)$ is graded by the dimension of the domain. 

The first algebraic cobordism theory $\Omega_*$ was constructed by Morel and Levine \cite{LM} to be the universal oriented Borel-Moore homology theory of schemes. 
A key theorem of \cite{LP} is  a canonical isomorphism between the two theories $\omega_*$ and  $\Omega_*$.

The algebraic cobordism rings over a point $\Omega_*(\text{Spec }k)$  and  $\omega_*(\text{Spec }k)$ are abbreviated by $\Omega_*(k)$ and $\omega_*(k)$ respectively.
Morel and Levine showed that  $\Omega_*(k)$ is isomorphic to the Lazard ring $\LL_*$, which implies $\LL_*$ is also isomorphic to $\omega_*(k)$. 
It is well known that $\LL_* \otimes_{\ZZ} \QQ$ has a basis formed by products of projective spaces, hence

\begin{align} \label{eq:n}
\omega_*(k) \otimes_{\ZZ} \QQ \cong
\bigoplus_{\lambda=(\lambda_1, \cdots, \lambda_r)} \QQ[\PP^{\lambda_1}\times \cdots\times\PP^{\lambda_{r}}]\end{align} 
where  $\lambda$ runs over all partitions of nonnegative integers.  

\subsection{Bundles on varieties}\label{bundles}
Algebraic cobordism theory was first generalized to pairs $[S,L]$ for a smooth surface $S$  and  a line bundle $L$ in \cite{LP}. 
The resulting algebraic cobordism group $\omega_{2,1}(\CC)$ can be used to prove generating series of Donaldson-Thomas invariants \cite{LP} and it plays a central role in the proof of  \Gott's conjecture in \cite{Tz}. 
More generally, Lee and \Pand\ \cite{LeeP} constructed the algebraic cobordism theory of pairs of smooth varieties and  vector bundles. We first recall their definitions and main results. 

For $X\in \sch$, let $\cM_{n,r}(X)$ be the set of isomorphism classes  $[f:Y \to X, E]$ with $Y\in \sm$  of dimension $n$, $E$ a vector bundle on $Y$ of rank $r$, and $f$ projective. 
$\cM_{n,r}(X)$ is a monoid under disjoint union of domains
and let $\cM_{n,r}(X)^+$ denote the group completion of $\cM_{n,r}(X)$. 

Suppose the projective morphism $\pi: Y\to X\times \PP^1$ together with $\pi_2= p_2\circ \pi: Y\to \PP^1$ give a double point degeneration over $X$ and $E$ is a vector bundle over $Y$. Denote the restriction of $E$ to $Y_i$ by $E_i$ for $i=0,1,2$ and the pullback of the restriction of $E$ to $B$ via the morphism $Y_3\to B$ by $E_3$.  
The \textit{double point relation} of bundles over varieties defined by $\pi$ and $E$ over $X$  is 
$$[Y_0\to X, E_0]-[Y_1\to X, E_1]-[Y_2\to X, E_2]+[Y_3\to X, E_3]\,.$$

Denote $\cR_{n,r}(X)\subset \cM_{n,r}(X)^+$ to be subgroup generated by all double point relations over $X$. The \textit{algebraic cobordism group}   of bundles of rank $r$ over varieties of dimension $n$ is defined by the quotient $$\omega_{n,r}(X):=\cM_{n,r}(X)^+/\cR_{n,r}(X).$$ 

The graded sum $$\omega_{*,r}(X):=\bigoplus_{n=0}^{\infty} \omega_{n,r}(X)$$
is an $\omega_*(k)$-module via product (and pullback). 
It  is also a $\omega_*(X)$-module   if $X\in \sm$.

The main result of \cite{LeeP} is  a natural basis and  dimension of $\omega_{n,r}(k)\otimes_{\ZZ} \QQ$. 
 A \textit{partition pair} of size $n$ and type $r$ is a pair $(\lambda, \mu)$ where
\begin{enumerate}
	\item $\lambda$ is a partition of $n$,
	\item $\mu$ is a subpartition of $\lambda$ of length $l(\mu)\leq r$. 
\end{enumerate}

The second condition means $\mu$ is obtained by deleting some parts of $\lambda$. 
In particular, $\mu$ is allowed to be empty or equal to $\lambda$ if $l(\lambda)\leq r$.
Two subpartitions are equivalent if they only differ by permuting equal parts of $\lambda$. 

Let $\cP_{n,r}$ be the set of equivalent classes of  all partition pairs of size $n$ and type $r$. 
To each $(\lambda, \mu)\in \cP_{n,r}$, we can associate  an element $\phi(\lambda, \mu) \in \omega_{n,r}(k)$  as follows. 
Suppose $\lambda=(\lambda_1, \lambda_2, \ldots, \lambda_{l(\lambda)})$, let 
$\PP^{\lambda}= \PP^{\lambda_1}\times \cdots \times \PP^{\lambda_{l(\lambda)}}$. 
To each part $m$ of $\mu$, let $L_m$ be the line bundle on $\PP^{\lambda}$ obtained by pulling back
$\cO_{\PP^m}(1)$  via the projection $P^{\lambda}\to \PP^m$. Define 

$$\phi(\lambda, \mu)=\left[\PP^{\lambda}, \cO_{\PP^{\lambda}}^{r-l(\mu)}\oplus \left(\bigoplus_{m\in \mu} L_m\right) \right].$$ 

Lee and \Pand\ \cite[Theorem~1]{LeeP} proved that for any nonnegative integer $n$ and $r$,
\begin{align} \label{eq:nr} \omega_{n,r}(k)\otimes_{\ZZ} \QQ =\bigoplus_{(\lambda, \mu)\in \cP_{n,r}}
\QQ\cdot \phi(\lambda, \mu).
\end{align}

As a result, $\{\phi(\lambda, \mu)| (\lambda, \mu)\in \cP_{n,r} \}$ form a basis of $\omega_{n,r}(k)\otimes_{\ZZ} \QQ$ and the dimension of $\omega_{n,r}(k)\otimes_{\ZZ} \QQ$ is the cardinality of $\cP_{n,r}$. 
On the other hand, the dimension of  $\omega_{n,r}(k)\otimes_{\ZZ} \QQ$ is also equal to total number of degree $n$ monomials in Chern classes of dimension $n$ varieties and rank $r$ vector bundles because Chern numbers given by these polynomials are the only invariants of $\omega_{n,r}(k)\otimes_{\ZZ} \QQ$ \cite[Theorem~4]{LeeP}. 
The special case of $r=0$ reduces to Equation \eqref{eq:n}. 

Moreover,  $\omega_{*,r}(k)$ is a free $\omega_*(k)$-module with basis
$$\omega_{*,r}(k)=\bigoplus_{\lambda} \omega_*(k) \cdot \phi(\lambda,\lambda)$$
where the sum is over all partitions $\lambda$ of length at most $r$ \cite[Theorem~2]{LeeP}. 

Once we understand the structure of $\omega_{*,r}(k)$,  $\omega_{*,r}(X)$ is simply an extension of scalars of $\omega_*$ because 
$$\omega_*(X)\otimes_{\omega_{*}(k)} \omega_{*,r}(k) \to \omega_{*,r}(X)$$ is an isomorphism \cite[Theorem~3]{LeeP}. 

\subsection{Lists of bundles}
To prove Equation \eqref{eq:nr}, Lee and \Pand\ defined the algebraic cobordism theory of lists of line bundles $\omega_{n,1^r}$. 
The construction of $\omega_{n,1^r}$ is almost identical to $\omega_{n,r}$ with only two exceptions. First, the elements in $\omega_{n,1^r}(X)$ are isomorphism classes of  the form $[f: Y\to X, L_1, \cdots, L_r]$, where  $L_1, \ldots, L_r$ is an ordered list of line bundles on $Y$. 
Second, the double point relation 
\begin{align} \label{eq:n1r} 
[Y_0\to X, \{L_{0j}\}_{j=1}^r]-[Y_1\to X, \{L_{1j}\}_{j=1}^r]
-[Y_2\to X, \{L_{2j}\}_{j=1}^r]+[Y_3\to X, \{L_{3j}\}_{j=1}^r]
\end{align}
is given  by a double point degeneration from $\pi: Y\to X\times \PP^1$  and an  ordered list of line bundles $L_1,\ldots ,L_r$ on $Y$ such that $L_{ij}$ are line bundles on $Y_i$ from $L_j$ in the same manner of
Section \ref{bundles}.

More generally, one can analogously define algebraic cobordism theory  of lists of vector bundles on varieties to be 
$$\omega_{n, r_1, \ldots, r_k}(X) =  \cM_{n, r_1, \ldots, r_k}(X)^+ /\cR_{n, r_1, \ldots, r_k}(X).$$ 
Here $\cM_{n, r_1, \ldots, r_k}(X)^+$ is the free abelian group over $\ZZ$ generated by isomorphism classes of projective morphisms and vector bundles $[f: Y\to X, E_1,\ldots, E_k]$ with rank $E_i=r_i$ and $\dim Y = n$. 
 The double point relation is the same as \eqref{eq:n1r} except $L_i$ and $L_{ij}$ are replaced by vector bundles $E_i$ and $E_{ij}$.
$\cR_{n, r_1, \ldots, r_k}(X)$ is the subgroup generated by double point relations.
 Therefore $\omega_{n,1^k}(X)$ is a special case of $\omega_{n,\ri}(X)$ with all $r_i=1$. 

The graded sum $$\omega_{*,\ri}(X):=\bigoplus_{n=0}^{\infty} \omega_{n,\ri}(X)$$
is an $\omega_*(k)$-module via product (and pullback). 
If $X\in \sm$, then $\omega_{*,\ri}(X)$ is also a module over the ring $\omega_*(X)$.

\begin{defn}
A \textit{partition list} of size $n$ and type $(\ri)$ is an ordered list $(\lambda,\mu_1, \ldots, \mu_k)$ where
\begin{enumerate}
	\item $\lambda$ is a partition of $n$,
	\item $\mu_i$ is a partition of length $l(\mu_i)\leq r_i$, 
	\item the union of $\mu_1,\ldots, \mu_k$ is a subpartition of $\lambda$. 
\end{enumerate}
\end{defn}

Two partition lists $(\lambda,\mu_1, \ldots, \mu_k)$ and $(\lambda',\mu_1', \ldots, \mu_k')$ are \textit{equivalent} if $\lambda$, $\lambda'$ are the same partitions and permuting equal parts of $\lambda$ makes $\mu_i'$ become  $\mu_i$. 
Let $\cP_{n,r_1,\ldots, r_k}$ be the set of partition lists of size $n$ and type $(\ri)$. For example,
\begin{equation*}\cP_{3,2,1}=\left\{\begin{aligned}
&(3, \varnothing, \varnothing), (3, 3, \varnothing ), (3,\varnothing, 3), \\
&(21, \varnothing,\varnothing), (21, 2, \varnothing), (21, 1, \varnothing), (21, \varnothing, 2),
(21, \varnothing, 1),\\
&(21, 21, \varnothing), (21, 2, 1), (21,1,2)\\
&(111, \varnothing,\varnothing), (111, 1, \varnothing), (111, \varnothing, 1), (111, 11, \varnothing), (111, 1,1),
(111, 11, 1)
\end{aligned}
\right\}\end{equation*}

To each $(\lambda,\mu_1, \ldots, \mu_k) \in \cP_{n,r_1,\ldots r_k}$, we associate an element
$$\phi(\lambda,\mu_1, \ldots, \mu_k) = \left[\PP^{\lambda}, 
\cO_{\PP^{\lambda}}^{r_1-l(\mu_1)}\oplus \left(\bigoplus_{m\in \mu_1} L_{m}\right), \ldots,
\cO_{\PP^{\lambda}}^{r_k-l(\mu_k)}\oplus \left(\bigoplus_{m\in \mu_k} L_{m}
\right) \right]$$
in $\omega_{n,\ri}(k)$ (recall $L_m$ be the line bundle on $\PP^{\lambda}$ obtained by pulling back
$\cO_{\PP^m}(1)$  via the projection $P^{\lambda}\to \PP^m$). Since the union of $ \mu_i$'s is a subpartition of $\lambda$, all line bundles $L_m$ in $\phi(\lambda,\mu_1, \ldots, \mu_k)$ come from distinct factors of $\PP^{\lambda}$. 

The following theorem describes a basis of  $\omega_{n,\ri}(k)\otimes_{\ZZ} \QQ$. 
\begin{theo}\label{nriQ}
For all nonnegative integers $n, \ri$, we have
$$\omega_{n,\ri}(k)\otimes_{\ZZ} \QQ
=\bigoplus_{(\lambda,\mu_1, \ldots, \mu_k) \in \cP_{n,\ri}} \QQ\cdot\phi(\lambda,\mu_1, \ldots, \mu_k). $$
\end{theo}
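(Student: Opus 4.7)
The plan is to adapt, essentially verbatim, the strategy that Lee--Pandharipande \cite{LeeP} used to establish the single-bundle case \eqref{eq:nr}, iterating their arguments to deal with a list of $k$ bundles. The proof splits into a spanning statement and a linear independence statement. Throughout we work modulo torsion, i.e.\ after tensoring with $\QQ$.

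\textbf{Spanning.} Starting from an arbitrary generator $[M \to \Spec k, E_1,\ldots, E_k]$ of $\omega_{n,r_1,\ldots,r_k}(k)$, I would perform three successive reductions. First, I would iteratively apply the splitting-principle trick for algebraic cobordism to each $E_i$: use the projective bundle $\PP(E_i \oplus \cO)\to M$ together with its tautological line sub-bundle to produce, via a standard double point degeneration, a relation expressing $[M, E_1,\ldots, E_i,\ldots, E_k]$ in terms of classes $[M', E_1',\ldots, L \oplus E_i',\ldots, E_k']$ where $\mathrm{rank}\, E_i'= r_i-1$ and $L$ is a line bundle, with the other $E_j$ pulled back. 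Iterating, each $E_i$ may be assumed to be a direct sum of line bundles, so the class lies in the image of the natural map $\omega_{n,1^{r_1+\cdots+r_k}}(k)\to \omega_{n,r_1,\ldots,r_k}(k)$ obtained by grouping line bundles into prescribed blocks. Second, using the isomorphism \eqref{eq:n}, I would replace the underlying variety $M$ by a $\QQ$-combination of products of projective spaces $\PP^\lambda$; the careful point is that line bundles extend over the double point degenerations realizing this replacement, so the list of line bundles is preserved up to pullbacks. Third, any line bundle on $\PP^\lambda$ is of the form $\bigotimes_i \cO_{\PP^{\lambda_i}}(a_i)$, and the double point relations of Lee--Pandharipande (their ``monomial to tautological'' reductions on $\PP^m$) reduce each tensor factor to either $\cO$ or $\cO_{\PP^{\lambda_i}}(1)$. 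Finally, the constraint that the line bundles in a single $\phi(\lambda,\mu_1,\ldots,\mu_k)$ come from \emph{distinct} factors of $\PP^\lambda$ is enforced by the observation that $\cO(1)\boxtimes \cO(1)$ on $\PP^a\times \PP^a$ can be degenerated (by a double point relation over $\PP^1$) into a class supported on $\PP^\lambda$ with $\lambda$ having a longer partition, matching the condition that $\cup \mu_j$ be a subpartition of $\lambda$.

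\textbf{Linear independence.} Each monomial $P(c_\bullet(T_M), c_\bullet(E_1),\ldots, c_\bullet(E_k))$ of total degree $n$ in the Chern classes of the tangent bundle and of the $E_i$'s defines, by integration over $M$, a homomorphism $\omega_{n,r_1,\ldots,r_k}(k)\to \QQ$, because Chern numbers are additive under disjoint union and vanish on double point relations (this is the content of \cite[Theorem 4]{LeeP} for a single bundle, and the same formal computation extends to several bundles, as the relevant blow-up and projective-bundle formulas are multilinear in the bundle inputs). A standard bijection identifies the set of such monomials of degree $n$ with $\cP_{n,r_1,\ldots,r_k}$: the parts of $\lambda$ record the degrees of tangent-Chern contributions, and the subpartitions $\mu_i$ record which of these contributions are replaced by Chern contributions from $E_i$. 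Evaluating this family of homomorphisms on the classes $\phi(\lambda,\mu_1,\ldots,\mu_k)$ yields a pairing matrix which, after fixing any linear extension of the natural partial order on $\cP_{n,r_1,\ldots,r_k}$, is triangular with nonzero diagonal (the diagonal entries are products of the nontrivial Chern numbers of $\PP^{\lambda_i}$ with $\cO(1)$ or $\cO$ as computed in \cite[\S 3]{LeeP}). Hence the $\phi(\lambda,\mu_1,\ldots,\mu_k)$ are $\QQ$-linearly independent.

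\textbf{Main obstacle.} The subtle part is the final stage of the spanning argument: after reducing to line bundles on $\PP^\lambda$, one must show that \emph{every} tensor monomial $\cO(a_1,\ldots,a_{l(\lambda)})$ appearing across the list can be simultaneously normalized to have $a_i\in \{0,1\}$ \emph{with the extra constraint} that, across the $k$ lists of line bundles, each projective-space factor contributes a nontrivial $\cO(1)$ to at most one $\mu_j$. This is the combinatorial content of the partition-list condition ``$\cup \mu_j$ is a subpartition of $\lambda$.'' I expect the cleanest route is to introduce an auxiliary projective space factor each time two of the $E_i$'s would share a factor of $\PP^\lambda$, carrying out the same projectivization trick used in \cite{LeeP} but performed symmetrically with respect to the $k$ slots; this should yield exactly the desired normal form, and the rest is book-keeping.
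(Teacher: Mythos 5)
Your overall strategy (split the bundles into line bundles, reduce to products of projective spaces, and get linear independence from Chern numbers via a triangular pairing) is the same family of ideas the paper uses, but your spanning argument has a genuine gap at its second step. The claim that, after invoking \eqref{eq:n}, ``line bundles extend over the double point degenerations realizing this replacement'' is unjustified and in general false: the degenerations expressing $[M]$ as a $\QQ$-combination of products of projective spaces come from the structure theory of $\omega_*(k)$ alone, are not adapted to the chosen line bundles, and a line bundle on the smooth fiber $Y_0=M$ of such a family need not extend to the total space. The same caveat already affects your first step: the splitting of each $E_i$ into a sum of line bundles (Lemma 13 of \cite{LeeP}) holds only in the quotient $\widetilde{\omega}_{*,\ri}(k)=\omega_{*,\ri}(k)/\fm\cdot\omega_{*,\ri}(k)$, i.e.\ modulo classes that are products of positive-dimensional varieties with lower-dimensional bundle classes, so your conclusion that the class lies in the image of $\omega_{n,1^{\sum r_i}}(k)$ is only valid modulo $\fm$.

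The paper's proof is structured precisely to avoid transporting bundles across cobordisms of the base: it works in $\widetilde{\omega}_{n,\ri}(k)$, where the surjection from the line-bundle-list theory together with \cite[Proposition~10]{LeeP} shows this quotient is generated by the $\phi(\lambda,\mu_1, \ldots, \mu_k)$ with $\cup_i\mu_i=\lambda$, and then runs an induction on $n$ using the $\omega_*(k)$-module structure, the error terms in $\fm\cdot\omega_{*,\ri}(k)$ accounting exactly for the basis elements with $\cup_i\mu_i\subsetneq\lambda$; the dimension upper bound so obtained is matched by the Chern-number lower bound of Theorem \ref{chern}. If you insert this quotient-plus-induction scaffolding, your steps 1 and 3 (and your ``main obstacle'' about distinct factors, which is already contained in the line-bundle-list statement) fall into place; without it, the reduction to $\PP^{\lambda}$ does not go through. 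Your independence argument is essentially the paper's Theorem \ref{chern}, except that the pairing matrix there is only block lower triangular, with nonsingular blocks obtained by induction on $k$ from the single-bundle case, rather than honestly triangular with nonzero diagonal entries; that discrepancy is a matter of detail, not a flaw.
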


\begin{proof}
	Let $\fm\subset \omega_*(k)$ be the ideal generated by varieties of positive dimension. 
	Define the graded quotient 
	$$\widetilde{\omega}_{*,\ri}(X)=\omega_{*,\ri}(X)/\fm\cdot \omega_{*,\ri}(X),\,
	\widetilde{\omega}_{*,\ri}(X)=\bigoplus_{n=0}^{\infty} \widetilde{\omega}_{n,\ri}(X)$$
	from the  $\omega_*(k)$-module structure of $\omega_{*,\ri}(X)$. 
	Lemma 13 in \cite{LeeP} and the discussion after it imply that for any smooth variety $X$ and vector bundle $E$ of rank $r$ on $X$, there exists a birational morphism  $\pi: \hat{X}\to X$ such that 
	$$[X \to X,E]=\pi_*[\hat{X}\to \hat{X},L_1\oplus\cdots\oplus L_r]$$ 
	in $\widetilde{\omega}_{*,r}(X)$ for some line bundles $L_1, \ldots, L_r$ on $\hat{X}$. 
	 
	For $r$ bundles, this trick can be applied $r$ times so that every time the pullback of a new $E_i$ splits, every $[X\to X, E_1,\ldots, E_k]$  can be expressed as
	$$[X\to X, E_1,\ldots, E_k] = [\bar{X}\to X, \oplus_{j=1}^{r_1} L_{1j}, \ldots, \oplus_{j=1}^{r_k} L_{kj}] $$ in $\widetilde{\omega}_{*,\ri}(X)$ for some $\bar{X}$ and line bundles $L_{ij}$ on $\bar{X}$. 
	After pushing forward to $\Spec k$,  
	\begin{align} \label{eq:split}[X, E_1,\ldots, E_k] = [\bar{X}, \oplus_{j=1}^{r_1} L_{1j}, \ldots, \oplus_{j=1}^{r_k} L_{kj}] \end{align} 
	in $\widetilde{\omega}_{*,\ri}(k)$. 
		
	Mapping  $[f: Y \to X, L_{11}, \ldots, L_{kr_k}]$ to $[f:Y\to X, \oplus_{j=1}^{r_1} L_{1j}, \ldots, \oplus_{j=1}^{r_k} L_{kj}] $ defines a group homomorphism from $\cM_{n, 1^{\sum r_i}}(X)^+ $  to $\cM_{n, r_1, \ldots, r_k}(X)^+ $.
	Double point relations for  $[Y, L_{11}, \ldots, L_{kr_k}]$  in $\cM_{n, 1^{\sum r_i}}(X)^+ $ naturally impose double point relations for  $[Y, \oplus_{j=1}^{r_1} L_{1j}, \ldots, \oplus_{j=1}^{r_k} L_{kj}] $  by taking direct sums of line bundles. 
	Therefore there is a group homomorphism from  $\omega_{*, 1^{\sum r_i}}(k)$ to $\omega_{*,\ri}(k)$ which is compatible with the $\omega_*(k)$-modules structures. 
	Moreover, Equation \eqref{eq:split} implies the induced homomorphism $\widetilde{\omega}_{*, 1^{\sum r_i}}(k)$ to $\widetilde{\omega}_{*,\ri}(k)$ is surjective.
	It follows from \cite[Proposition~10]{LeeP} that $\widetilde{\omega}_{n,\ri}(k)$ is generated by 
	$$\label{*1}\{\phi(\lambda,\mu_1, \ldots, \mu_k)\, |\, (\lambda, \mu_1, \ldots, \mu_k)\in \cP_{n,\ri}, \cup_i\mu_i=\lambda\}$$ over $\ZZ$. 
	
	We now prove Theorem \ref{nriQ} by induction on $n$. The $n=0$ case is trivial. Assume the result is true for all $n'<n$, then $\oplus_{n'<n}(\omega_{n-n'}(k)\cdot \omega_{n',\ri}(k)) \otimes_{\ZZ}\QQ$ is spanned by 
	$$\label{*2}\{\phi(\lambda, \mu_1, \ldots, \mu_k)\,|\, (\lambda, \mu_1, \ldots, \mu_k)\in \cP_{n,\ri}, \cup \mu_i\subsetneq \lambda\}$$ over $\QQ$. Since  $\omega_{n,\ri}(k)\otimes_{\ZZ} \QQ$ is spanned by generators of  $\omega_{n-n'}(k)\cdot \omega_{n',\ri}(k)$ for $n'<n$  and $\widetilde{\omega}_{n,\ri}(k)$, it is generated by 
	 $$\{\phi(\lambda,\mu_1,\ldots, \mu_k)\,|\,(\lambda,\mu_1,\ldots, \mu_k) \in \cP_{n,\ri} \}. $$
	As a result, the dimension of $\omega_{n,\ri}(k)\otimes_{\ZZ} \QQ$ is less or equal to the size of $\cP_{n,\ri}$.
	We will see   $\dim \omega_{n,\ri}(k)\otimes_{\ZZ} \QQ \geq |\cP_{n,\ri}|$ in the next Theorem so we conclude these generators form a basis of $\omega_{n,\ri}(k)\otimes_{\ZZ} \QQ$.
\end{proof}

	Let $\cC_{n,\ri}$ be the finite-dimensional $\QQ$-vector space of graded degree $n$ polynomials in Chern classes 
	$$\{c_i(T_Y)\}_{i=1}^{n}, \{c_i(E_1)\}_{i=1}^{r_1},\ldots, \{c_i(E_k)\}_{i=1}^{r_k}$$
	(deg $c_i = i$). 
	
\begin{theo}\label{chern} For all $\Theta\in \cC_{n,\ri}$,  the Chern invariants of the list $[Y, E_1, \ldots ,E_k]$
$$\int_Y \Theta (c_1(T_Y),\ldots, c_n(T_Y), c_1(E_1),\ldots, c_{r_1}(E_1),  c_1(E_2),\ldots,c_{r_k}(E_k))$$
respect algebraic cobordism. The resulting map 
$$\omega_{*,\ri}(k)\otimes_\ZZ \QQ\to \cC_{n,\ri}^*$$ is an isomorphism. 
\end{theo}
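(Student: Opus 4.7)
The plan is to verify first that Chern integrals descend to a well-defined map on $\omega_{*,\ri}(k)\otimes_\ZZ\QQ$, and then to show this map is an isomorphism by comparing ranks with the generating set from Theorem \ref{nriQ}.

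For the invariance step, I would mimic the argument used in \cite{LeeP} for the single-bundle case. Given a double point degeneration $\pi:Y\to\PP^1$ with $Y\in\sm$ equipped with vector bundles $E_1,\ldots,E_k$ and a monomial $\Theta\in\cC_{n,\ri}$, form the relative class $\widetilde\Theta:=\Theta(c_*(T_{Y/\PP^1}),c_*(E_1),\ldots,c_*(E_k))\in H^{2n+2}(Y)$. The restriction $\widetilde\Theta|_{Y_i}$ agrees with the intrinsic Chern monomial on the smooth fibers $Y_0,Y_1,Y_2$ because their normal bundles in $Y$ are trivial, and on the projective bundle $Y_3=\PP(N_{Y_1/B}\oplus\cO_B)\to B$ the relative tangent class is controlled by the standard $\PP^1$-bundle formula so that $E_3$, being a pullback from $B$, still sees the correct Chern classes. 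The double point formula from \cite{LP}, applied to $\widetilde\Theta$, then yields $\int_{Y_0}\widetilde\Theta-\int_{Y_1}\widetilde\Theta-\int_{Y_2}\widetilde\Theta+\int_{Y_3}\widetilde\Theta=0$, which is exactly the invariance demanded by the double point relation \eqref{eq:n1r}.

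For the isomorphism, I work with the generating set $\{\phi(\lambda,\mu_1,\ldots,\mu_k)\}_{\cP_{n,\ri}}$. The Chern classes of $\phi(\lambda,\mu_1,\ldots,\mu_k)=[\PP^\lambda,\bigoplus_j\cO\oplus\bigoplus_{m\in\mu_i}L_m]$ are explicit: $c(T_{\PP^\lambda})=\prod_i(1+h_i)^{\lambda_i+1}$ and $c(E_i)=\prod_{m\in\mu_i}(1+h_{\sigma(m)})$, where $h_i$ is the hyperplane class of the $i$-th factor of $\PP^\lambda$. Integration picks out the coefficient of $\prod_i h_i^{\lambda_i}$, so evaluating Chern monomials on $\phi(\lambda,\mu_1,\ldots,\mu_k)$ reduces to a combinatorial extraction. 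I would exhibit a monomial basis of $\cC_{n,\ri}$ indexed by $\cP_{n,\ri}$ such that the resulting pairing matrix is triangular with nonzero diagonal entries. Together with the upper bound $\dim\omega_{n,\ri}(k)\otimes\QQ\leq|\cP_{n,\ri}|$ from Theorem \ref{nriQ} and the dimensional identity $\dim\cC_{n,\ri}=|\cP_{n,\ri}|$ obtained by matching Poincar\'e series, this forces all three numbers to coincide, establishes the basis property claimed in Theorem \ref{nriQ}, and shows the Chern pairing map is an isomorphism.

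The main obstacle is the triangularity of the pairing matrix. In the $k=1$ case \cite[Theorem~4]{LeeP}, $\phi(\lambda,\mu)$ is paired with the monomial in which each part $m\in\mu$ contributes the top Chern class $c_m(E)$ and each remaining part of $\lambda$ contributes a Chern class of $T$. Extending this to lists requires assigning each marked part to one of the bundles $E_1,\ldots,E_k$, respecting the subpartition condition $\bigcup_i\mu_i\subseteq\lambda$ and the equivalence relation that permutes equal parts of $\lambda$; when $\lambda$ has repeated parts several partition lists can contribute to the same monomial, and one must order $\cP_{n,\ri}$ so that only the diagonal term is maximal and nonvanishing. This amounts to a careful Vandermonde-type computation on products of projective spaces, which I expect to go through by induction on $k$ using the decomposition $\omega_{*,\ri}(k)=\omega_*(k)\otimes_{\omega_*(k)}\omega_{*,\ri}(k)$ together with Lee--Pandharipande's theorem in the single-bundle case.
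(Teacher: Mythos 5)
Your overall architecture is the same as the paper's: invariance of the mixed Chern numbers under double point relations, then an isomorphism argument via the pairing of the generating set $\{\phi(\lambda,\mu_1,\ldots,\mu_k)\}$ against Chern monomials, using $\dim\omega_{n,\ri}(k)\otimes\QQ\le|\cP_{n,\ri}|$ from Theorem \ref{nriQ}, the count $\dim\cC_{n,\ri}=|\cP_{n,\ri}|$, and a triangularity statement, with induction on $k$ reducing to the Lee--Pandharipande single-bundle case. However, two steps are problematic. First, your justification of the invariance is wrong as stated: the normal bundles of $Y_1$ and $Y_2$ in $Y$ are \emph{not} trivial --- since $Y_1+Y_2$ is linearly equivalent to a fiber, $N_{Y_1/Y}\cong\cO_{Y_1}(-B)$ and $N_{Y_2/Y}\cong\cO_{Y_2}(-B)$ --- so $\widetilde\Theta|_{Y_i}$ does not equal the intrinsic Chern monomial on $Y_1,Y_2$ (only on the regular fiber $Y_0$). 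If the restrictions did agree as you claim, then $[Y_0]\sim[Y_1]+[Y_2]$ would give a three-term identity with no $Y_3$ contribution, which is false; the $+\int_{Y_3}$ term arises precisely from the correction classes supported along $B$ coming from these nontrivial normal bundles, assembled via the projective bundle formula on $Y_3=\PP(N_{Y_1/B}\oplus\cO_B)$. The paper sidesteps this by applying the proof of \cite[Proposition~5]{LeeP} directly (a splitting argument applied to each $E_i$); your sketch needs either that citation or the actual normal-bundle/projective-bundle computation rather than the triviality claim.

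Second, the triangularity of the pairing matrix, which you correctly identify as the main obstacle, is exactly the content of the paper's proof and is left unproven in your plan. The paper makes it precise: it indexes monomials by $\cQ_{n,\ri}$ (bijective with $\cP_{n,\ri}$ via transposing the $\mu_i$), orders monomials by the exponent vector of $c_1(E_k),\ldots,c_{r_k}(E_k)$ (lexicographically from the top class down, as in \cite[Section~1.2.2]{LeeP}), shows via the analogue of \cite[Lemma~7]{LeeP} that $\bM_{n,\ri}$ is block lower triangular, and identifies the diagonal block for a fixed $\mu_k$ with $\bM_{n-|\mu_k|,r_1,\ldots,r_{k-1}}$, so nonsingularity follows by induction on $k$ from the $k=0,1$ cases of \cite{LeeP}. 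Your appeal to ``induction on $k$'' points in the right direction, but the cited decomposition $\omega_{*,\ri}(k)=\omega_*(k)\otimes_{\omega_*(k)}\omega_{*,\ri}(k)$ is vacuous and carries no inductive content; what is needed is the explicit partial order and the identification of the diagonal blocks, without which the isomorphism claim is not established.
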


\begin{proof}
First we prove the Chern invariants respect algebraic cobordism. The argument is due to Pandharipande \cite{Pandprivate}. 

If $E$ is a vector bundle on $Y$ of rank $r$, let $p:\PP(E)\to Y$ be the projective bundle of $E$ over $Y$.
The integral on $Y$ can be lifted to $\PP(E)$ because
\begin{align}\begin{split}\label{projbundle}&\int_Y \Theta (c_1(T_Y),\ldots, c_n(T_Y), c_1(E),\ldots, c_{r}(E)) \\ = &\int_{\PP(E)} \Theta (c_1(p^*T_Y),\ldots, c_n(p^*T_Y), c_1(p^*E),\ldots, c_{r}(p^*E)) c_1(\cO_{\PP(E)}(1)^{r-1}).  \end{split}\end{align}
Since there are exact sequences
$$0 \to \cO_{\PP(E)}\to \cO_{\PP(E)}(1)\otimes p^*E \to T_{\PP(E)/Y}\to 0$$ 
$$0 \to T_{\PP(E)/Y}\to T_{\PP(E)}\to p^*T_Y\to 0,$$
Chern classes of $p^*T_Y$ can be expressed as Chern classes of $\cO_{\PP(E)}(1)$, $ p^*E$, and $T_{\PP(E)}$. 

The first part of \cite[Proposition 5]{LeeP} proves the case for arbitrary number of line bundles.
For vector bundle $E_1, \ldots, E_k$ on $Y$, let $\FF_0 = Y$ and $\pi_0$ be the identity map of $Y$. 
We will construct a sequence of varieties $\FF_i$ with morphisms $\pi_i: \FF_i \to Y$. 
For all $i \geq 1$, let $\FF_i$ be the complete flag variety over $\FF_{i-1}$   obtained from the vector bundle $\pi_{i-1}^*(E_i)$. 
The map $\pi_i$ is the composition of the structure map $\FF_i\to \FF_{i-1}$ and $\pi_{i-1}$. 
Let $\pi = \pi_k$ and $\FF = \FF_k$, since  $\pi: \FF\to Y$ is compositions of structure maps of projective bundles and for all $i$ there are filtrations of vector bundles
$$0 = F_{i,0}\subset F_{i,1}\subset F_{i,2}\subset\cdots \subset F_{i,r_i}=\pi^*E_i $$ with $F_{i,j}/F_{i,j-1} \cong L_{i,j}$ are line bundles.
In $K$-theory of $\FF$ the tangent bundle $T_{\FF}$ is the sum of $\pi^*T_Y$ and some line bundles $B_i$ and vector bundles $\pi^*E_i$ is the sum of  $L_{i,j}$.  
As a result, the Chern invariants of $T_Y$, $E_1$,\ldots, $E_k$ on $Y$can be expressed as integrals of Chern classes of $T_{\FF}$, $B_i$ and $L_{i,j}$ on $\FF$. 

If $Y \to \PP^1$ with vector bundles $\cE_{j}$ gives the double point relation
$$[Y_0, \{E_{0j}\}_{j=1}^k]-[Y_1, \{E_{1j}\}_{j=1}^k]-[Y_2, \{E_{2j}\}_{j=1}^k]+[Y_3, \{E_{3j}\}_{j=1}^k]. $$
The complete flag variety $\FF\to \PP^1$ with $\cE_{j}$ constructed above gives a double point relation of the complete flag variety $\FF_{Y_i}$ constructed above for $Y_i$ with $\{E_{ij}\}$. 
Therefore the Chern invariants of $T_{Y_i}$ and $E_{ij}$ on $Y_i$, which equal the Chern invariants of  $T_{\FF_i}$ and some line bundles on $\FF_{Y_i}$, respect algebraic cobordism.

Because Chern invariants respect algebraic cobordism, the integration map descends to a well-defined bilinear map
$$\rho: (\omega_{*,\ri}(k)\otimes_\ZZ \QQ) \times \cC_{n,\ri} \to \QQ.$$ 

To prove the map is an isomorphism, we follow the idea in  \cite[Section~1]{LeeP}. 
Let $\cQ_{n,\ri}$ be the set of partition lists $(\nu, \mu_1, \ldots, \mu_k)$
where
\begin{enumerate}
	\item every $\mu_i$ is a partition of size $|\mu_i|$ and $\sum_{i=1}^k |\mu_i|\leq n$,
	\item the largest part of $\mu_i$ is at most $r_i$,
	\item $\nu$ is a partition of $n-\sum |\mu_i|$. 
\end{enumerate}

Taking the number of parts equal to $m$ in $\mu_i$ to the power of $c_m(E_i)$ and the number of parts equal to $m$ in $\nu$ to the power of $c_m(T_Y)$ defines a bijective correspondence between $\cQ_{n,\ri}$ and the monomials in $\cC_{n,\ri}$. 
Let $C(\nu, \mu_1, \ldots, \mu_k) $ be the image of $(\nu, \mu_1, \ldots, \mu_k)$ in $\cC_{n,\ri}$. 
On the other hand, there is a natural bijection 
$\epsilon: \cQ_{n,\ri}\to \cP_{n,\ri}$ defined by 
$\epsilon(\nu, \mu_1, \ldots, \mu_k)=(\nu\cup (\cup \mu_i^t), \mu_1^t, \ldots, \mu_k^t)$ 
where $\mu_i^t$ is the partition obtained by transposing the Young diagram of $\mu_i$. Hence $\mu_i^t$ has length at most $r_i$. In particular $|\cP_{n,\ri}|=\dim \cC_{n,\ri}$. 

Now we define a partial ordering on the monomials in $\cC_{n,\ri}$ (and on $\cQ_{n,\ri}$ via the bijection). 
If  $F$ and $G$ are two  monomials in $\cC_{n,\ri}$
and 
$$c_1(E_k)^{a_1}\cdots c_{r_k}(E_k)^{a_{r_k}}\ \ \text{and} \ \ c_1(E_k)^{b_1}\cdots c_{r_k}(E_k)^{b_{r_k}}$$
are the factors of $c_*(E_k)$ in   $F$ and $G$.
We say $F<G$ if $a_{r_k}<b_{r_k}$ or $a_j=b_j$ for all $j>i$ and $a_i<b_i$. 
This partial ordering agrees with the one defined on $\cC_{n,r}$ in \cite[Section~1.2.2]{LeeP}. 

If $\bM_{n,\ri}$ is the matrix with rows and columns indexed and ordered by $\cQ_{n,\ri}$ with entries
$$\bM_{n,\ri}[(\nu,\mu_1\ldots, \mu_k), (\nu',\mu'_1\ldots, \mu'_k)]
=\rho(\phi(\epsilon(\nu,\mu_1,\ldots, \mu_k)), C(\nu',\mu'_1,\ldots, \mu'_k)).$$
For dimension reason as in the proof of Lemma 7 in \cite{LeeP} (and only focus on the integration of $c_*(E_k)$),
$$\bM_{n,\ri}[(\nu,\mu_1,\ldots, \mu_k), (\nu',\mu'_1,\ldots, \mu'_k)]=0 $$ if 
$(\nu,\mu_1,\ldots, \mu_k)<(\nu',\mu'_1,\ldots, \mu'_k)$.  
As a result, $\bM_{n,\ri}$ is a block lower triangular matrix and the blocks are determined by all $(\nu,\mu_1\ldots, \mu_k)$ with  the same $\mu_k$. 
As in the proof of \cite[Proposition~8]{LeeP}, the block corresponding to $\mu_k$ is the matrix
$\bM_{n-|\mu_k|,r_1,\ldots, r_{k-1}}$. Since the base cases $k=1,0$ are already established  there, by induction $\bM_{n,\ri}$ is a nonsingular matrix and $\rho$ is an isomorphism. 
\end{proof}

The structure of $\omega_{n,\ri}(k)$ over $\ZZ$ is determined by: 
\begin{theo} \label{nriZ}
For all $r_i\geq 0$, $\omega_{*,\ri}(k)$ is a free $\omega_*(k)$-module with basis
$$\omega_{*,\ri}(k)=\bigoplus_{(\mu_1, \ldots, \mu_k)} \omega_*(k)\cdot \phi(\cup \mu_i,\mu_1, \ldots, \mu_k)$$
where the sum is over all partitions $(\mu_1, \ldots, \mu_k)$ with length $l(\mu_i)\leq r_i$. 
\end{theo}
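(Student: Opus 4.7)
The plan is to build the natural $\omega_*(k)$-module map
\[\Psi\colon \bigoplus_{(\mu_1,\ldots,\mu_k)} \omega_*(k)\cdot \phi(\cup_i\mu_i,\mu_1,\ldots,\mu_k) \longrightarrow \omega_{*,\ri}(k),\]
where the direct sum ranges over all tuples of partitions with $l(\mu_i)\leq r_i$, and then show that $\Psi$ is an isomorphism. Surjectivity will follow from the generation statement extracted during the proof of Theorem \ref{nriQ} together with a graded Nakayama argument, and injectivity will follow from the rational basis result combined with the $\ZZ$-torsion-freeness of $\omega_*(k)$.

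For surjectivity, observe that the proof of Theorem \ref{nriQ} already establishes that the graded quotient $\widetilde{\omega}_{n,\ri}(k)=\omega_{n,\ri}(k)/\fm\cdot \omega_{*,\ri}(k)$ is generated over $\ZZ$ by the classes $\phi(\cup_i\mu_i,\mu_1,\ldots,\mu_k)$ corresponding to the case $\cup_i\mu_i=\lambda$. Since $\omega_*(k)$ is non-negatively graded with $\omega_0(k)=\ZZ$ and augmentation ideal $\fm$, and since $\omega_{*,\ri}(k)$ is itself non-negatively graded, a graded Nakayama argument upgrades this to $\omega_*(k)$-generation: any $x\in\omega_{n,\ri}(k)$ is a $\ZZ$-combination of our generators modulo $\fm\cdot\omega_{*,\ri}(k)$, while the correction term in $\fm\cdot\omega_{*,\ri}(k)$ is an $\omega_*(k)$-combination of elements of strictly lower degree, to which induction on $n$ applies.

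For injectivity, I would pass to $\QQ$. Since $\omega_*(k)\otimes\QQ$ has a $\QQ$-basis of products of projective spaces by \eqref{eq:n}, the source of $\Psi\otimes\QQ$ has $\QQ$-basis $\{[\PP^{\nu}]\cdot\phi(\cup_i\mu_i,\mu_1,\ldots,\mu_k)\}$ indexed by a partition $\nu$ together with $(\mu_1,\ldots,\mu_k)$. The $\omega_*(k)$-module structure is given by product of varieties with pullback of bundles, so this basis element maps under $\Psi_\QQ$ to $\phi(\nu\cup(\cup_i\mu_i),\mu_1,\ldots,\mu_k)$. Setting $\lambda=\nu\cup(\cup_i\mu_i)$ gives a bijection (after accounting for the equivalence that permutes equal parts of $\lambda$) with the basis $\cP_{n,\ri}$ of Theorem \ref{nriQ}, so $\Psi\otimes\QQ$ is a $\QQ$-linear isomorphism and $\ker\Psi$ is a $\ZZ$-torsion module. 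But the source of $\Psi$ is a direct sum of copies of $\omega_*(k)=\LL_*$, which is a polynomial ring over $\ZZ$ and in particular torsion-free, so $\ker\Psi=0$.

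The main point requiring care is the bookkeeping in the basis bijection of the injectivity step: one must verify that the pairs $(\nu,(\mu_1,\ldots,\mu_k))$ enumerate the partition lists $(\lambda,\mu_1,\ldots,\mu_k)\in\cP_{n,\ri}$ once each, up to equivalence, via $\lambda=\nu\cup(\cup_i\mu_i)$. This uses the fact that a partition list is determined by $\cup_i\mu_i$ as a subpartition of $\lambda$ together with the individual $\mu_i$. Once this is confirmed, the remainder is essentially formal, and the theorem follows from the $\QQ$-result already in hand.
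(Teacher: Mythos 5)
Your proposal is correct and follows essentially the same route as the paper: surjectivity from the $\ZZ$-generation of $\widetilde{\omega}_{n,\ri}(k)$ established in the proof of Theorem \ref{nriQ} plus induction on degree, and freeness by comparing against the rational basis of Theorem \ref{nriQ} using that $\omega_*(k)\cong\LL_*$ is free (torsion-free) over $\ZZ$. The paper phrases the last step as a count of $|\cP_{n,\ri}|$ generators versus $\dim_\QQ\omega_{n,\ri}(k)\otimes\QQ$ rather than via an explicit map $\Psi$, but the content, including the bijection $(\nu,\mu_1,\ldots,\mu_k)\leftrightarrow(\lambda,\mu_1,\ldots,\mu_k)$ with $\lambda=\nu\cup(\cup_i\mu_i)$, is the same.
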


\begin{proof}

	In the proof of Theorem \ref{nriQ} we have seen $\widetilde{\omega}_{n,\ri}(k)$ can be generated by 
		$$\{\phi(\lambda,\mu_1, \ldots, \mu_k)\, |\, (\lambda, \mu_1, \ldots, \mu_k)\in \cP_{n,\ri}, \cup_i\mu_i=\lambda\}$$ over $\ZZ$. 
		Hence $\omega_{n,\ri}(k)$ is generated by this set and the subgroups 
		$$\omega_{n}(k)\cdot \omega_{0,\ri}(k),\ldots, \omega_{1}(k)\cdot \omega_{n-1,\ri}(k).$$
	Since $\omega_*(k)\cong \LL$ and it is well known that $\LL_i$ is a free $\ZZ$-module of rank equal to the number of partitions of $i$. 
	By induction the generators of $\LL_i$ multiplied by  $\phi(\cup \mu_i,\mu_1, \ldots, \mu_k)\in \omega_{n-i, r_1,\ldots,r_k}$ gives $ |\cP_{n,\ri}|$ elements to generate  $\omega_{n,\ri}(k)$ over $\ZZ$ (the $n=0$ case is trivial).
	From Theorem \ref{nriQ}, $\omega_{n,\ri}(k)\otimes_{\ZZ}\QQ$ has dimension $|\cP_{n,\ri}|$ thus there are no relations among these generators.  
	\end{proof}

In fact, $\omega_{n,\ri}(k)$ and $\omega_*(X)$ determines $\omega_{n,\ri}(X)$  for general $X\in \sch$. Let $p_Y$, $p_{\PP^{\lambda}}$ be the projections of $Y\times \PP^{\lambda}$ to $Y$ and $\PP^{\lambda}$ respectively. 
\begin{theo}\label{thm:scalar}
The natural map 
$$\gamma_X: \omega_*(X)\otimes_{\omega_{*}(k)} \omega_{*,\ri}(k) \to \omega_{*,\ri}(X)$$ defined by 
\begin{align*}&\gamma_X([Y\xrightarrow{f} X]\otimes \phi(\cup \mu_i,\mu_1, \ldots, \mu_k)) \\
=& \left[Y\times \PP^{\cup \mu_i} \xrightarrow{f\circ p_Y} X, 
\cO^{r_1-l(\mu_1)}\oplus p_{\PP^{\cup \mu_i}}^*\left(\bigoplus_{m\in \mu_1}L_{m}\right), 
\ldots,\cO^{r_k-l(\mu_k)}\oplus p_{\PP^{\cup \mu_i}}^*\left(\bigoplus_{m\in \mu_k} L_{m}
\right) \right] \end{align*} is an isomorphism of $\omega_*(k)$-modules. 
Therefore the algebraic cobordism theory of bundles on varieties $\omega_{*,\ri}$ is an extension of scalars of the original algebraic cobordism theory $\omega_*(X)$,
\end{theo}
\begin{proof} Consider an arbitrary element $[Y\to X, E_1, \ldots, E_k]$ in $\omega_{*,\ri}(X)$. 
We have proved	$$[Y\to Y, E_1,\ldots E_k] = [\bar{Y}\to Y, \oplus_{j=1}^{r_1} L_{1j}, \ldots, \oplus_{j=1}^{r_k} L_{kj}] $$ in $\widetilde{\omega}_{*,\ri}(Y)$ for some $\bar{Y}$ and line bundles $L_{ij}$ on $\bar{Y}$.
Taking suitable direct sum of line bundles in the surjectivity argument and applying the pairing $C_{\psi}$ for every bundle in the injectivity argument of 
\cite[Section~4]{LeeP}  prove the result. 
\end{proof}

\section{Algebraic cobordism theory of bundles and divisors on varieties}\label{sect:algcobordism}
The main goal of the paper is the enumeration of singular subvarieties from sections of bundles with given tangency conditions with a smooth divisor. 
In this section we will study how the data of the problem-- varieties, bundles, divisors-- can be degenerated.  
In other words, we will discuss the algebraic cobordism theory of bundles and  divisors on varieties.


\subsection{Constructions} 
\begin{assu}\label{divassu}
Let $D_1$, \ldots, $D_s$ be a set of reduced effective Cartier divisors  on a smooth variety.
Each $D_i$ is the union of disjoint smooth divisors or the zero divisor. 
The intersection of any $k$ elements from the set of irreducible components of  $D_1$, \ldots, $D_s$ is either empty or smooth of codimension $k$. 
\end{assu}

For $X\in \sch$, let $\cN_{n,1^s,\ri}(X)$ be the set of  the isomorphism classes of lists $[f:M\to X, D_1,\ldots, D_s, E_1,\ldots, E_k]$, where $f: M \to X$ is a projective morphism with $M\in \sm$ of dimension $n$, $E_i$ are vector bundles of rank $r_i$ on $M$, and $D_1,\ldots, D_s$ is  a set of divisors in $M$ satisfying Assumption \ref{divassu}.
The set $\cN_{n,1^s,\ri}(X)$ is graded by the dimension of $M$ and let $\cN_{n,1^s,\ri}(X)^+$ be the group completion of $\cN_{n,1^s,\ri}(X)$ over $\ZZ$. 
For simplicity we usually write the list $[f:M\to X, D_1,\ldots, D_s, E_1,\ldots, E_k]$ by $[M\to X, D_i, E_j]$  and $[f:M\to \Spec k, D_1,\ldots, D_s, E_1,\ldots, E_k]$ by $[M, D_i, E_j]$.

Suppose a  projective morphism $\pi: Y\to X\times \PP^1$ together with $\pi_2= p_2\circ \pi: Y\to \PP^1$ give a double point relation 
$$ [Y_0\to X]-[Y_1\to X]-[Y_2\to X]+[Y_3\to X]$$
over $X$. 
Recall $Y_0:=\pi_2^{-1}(0)$ is the smooth fiber  over $0$,  $\pi_2^{-1}(\infty) = Y_1\cup Y_2$, $B=Y_1\cap Y_2$ and $Y_3$ is a $\PP^1$-bundle over $B$.

Let $\{\cE_i\}_{i=1}^k$  be vector bundles on $Y$ and  $\{\cD_j\}_{j=1}^s$ be  effective divisors on $Y$.
Assume  all $\cD_j$ intersect $Y_i$  and $B$  transversally along smooth divisors.
Denote the restriction of $\cE_j$ to $Y_i$ by $E_{ij}$ for $i=0,1,2$ and call $E_{3j}$ the pullback of the restriction of $\cE_j$ to $B$ via the morphism $Y_3\to B$.
Denote the intersection $\cD_j\cap Y_i$ by $D_{ij}$ for $i=0,1,2$ and call $D_{3j}$ the inverse image of $\cD_j\cap B$ via the morphism $Y_3\to B$. 

Suppose  $\{D_{ij}\}_{j=1}^s$ satisfies Assumption \ref{divassu} for all $i$. 
The \textit{double point relation} given by $(\pi, \{\cD_j\}_{j=1}^s, \{\cE_j\}_{j=1}^k)$ is 
\begin{align} \begin{split} \label{eq:d1ri} 
[Y_0\to X, \{D_{0j}\}, \{E_{0j}\}]-[Y_1\to X, \{D_{1j}\}, \{E_{1j}\}]\\
-[Y_2\to X, \{D_{2j}\},\{E_{2j}\}]+[Y_3\to X, \{D_{3j}\}, \{E_{3j}\}].
\end{split} \end{align}

Denote the  subgroup of $\cN_{n,1^s,\ri}(X)^+$ generated by  double point relations \eqref{eq:d1ri} by $\cQ_{n,1^s,\ri}(X)$. 
The \textit{algebraic cobordism group of divisors and bundles  over varieties} is defined by $$\nu_{n,1^s,\ri}(X)=\cN^+_{n,1^s,\ri}(X)/\cQ_{n,1^s,\ri}(X).$$

The graded sum $$\nu_{*,1^s,\ri}(X):=\bigoplus_{n=0}^{\infty} \nu_{n,1^s,\ri}(X)$$
is an $\omega_*(k)$-module via product (and pullback). 

Let $$\cP'_{n, s, \ri} = \left\{(\lambda; \pi_1, \ldots, \pi_s; \mu_1,\ldots, \mu_k) \,\left|(\lambda, \pi_1, \ldots, \pi_s, \mu_1,\ldots, \mu_k)\in \cP_{n, 1^s, \ri}  \right.\right\}.$$
Consider an element $(\lambda; \pi_1, \ldots, \pi_s; \mu_1,\ldots, \mu_k)\in \cP_{n, s, \ri} $\footnote{We will abbreviate $(\lambda; \pi_1, \ldots, \pi_s; \mu_1, \ldots, \mu_k)$ by  $(\lambda; \pi_i; \mu_j)$ in the following discussion.}. 
Since $\pi_i$ is a partition of length at most $1$, it is either a positive integer $m$ or the empty set $\varnothing$. 
If $\pi_i=\varnothing$,  set $H_i$ to be the divisor $0$. 
If $\pi_i=(m)$, let $H_i$ be the inverse image of a hyperplane of $\PP^m$ via the projection map $\PP^{\lambda}\to \PP^m$. 
Define $$\psi: \cP'_{n, s, \ri} \to \nu_{n,1^s,\ri}(k)$$ by $$\psi(\lambda; \pi_i; \mu_j) =\left[\PP^{\lambda}, H_1,\ldots, H_s, \cO_{\PP^{\lambda}}^{r_1-l(\mu_1)}\oplus \left(\bigoplus_{m\in \mu_1} L_{m}\right), \ldots,
\cO_{\PP^{\lambda}}^{r_k-l(\mu_k)}\oplus \left(\bigoplus_{m\in \mu_k} L_{m}
\right) \right].$$

\begin{theo} \label{isom}

The natural map $$[f: M\to X, D_i, E_j]\to [f: M\to X, \cO(D_i), E_j]$$ induces an isomorphism
$$\nu_{n,1^s,\ri}(X)\to \omega_{n,1^s,\ri}(X)$$
as $\omega_*(k)$-modules.
\end{theo}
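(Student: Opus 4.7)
The plan is to verify well-definedness and surjectivity directly, and then extract injectivity from a parallel basis theorem for $\nu$. For well-definedness, I would check that the assignment $[M\to X,D_i,E_j]\mapsto [M\to X,\cO(D_i),E_j]$ sends every generator of $\cQ_{n,1^s,\ri}(X)$ into $\cR_{n,1^s,\ri}(X)$. Given a double-point datum $(\pi,\{\cD_i\},\{\cE_j\})$ as in \eqref{eq:d1ri}, the line bundles $\{\cO(\cD_i)\}$ together with $\{\cE_j\}$ on $Y$ form a bundle-valued double point datum: transversality of each $\cD_i$ with $Y_0$, $Y_1$, $Y_2$ and $B$ forces $\cO(\cD_i)|_{Y_j}=\cO(D_{ji})$ for $j=0,1,2$, and the pullback of $\cO(\cD_i)|_B$ to $Y_3$ equals $\cO(D_{3i})$ by construction. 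Hence the induced relation in $\omega_{n,1^s,\ri}(X)$ is precisely the image of the chosen one in $\nu_{n,1^s,\ri}(X)$.

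For surjectivity, I would combine Theorem \ref{thm:scalar} with Theorem \ref{nriZ}: every element of $\omega_{n,1^s,\ri}(X)$ is an $\omega_*(X)$-linear combination of basis classes $\phi(\cup\mu_i,\pi_1,\ldots,\pi_s,\mu_1,\ldots,\mu_k)$, and each such class is the image of the corresponding $\psi(\lambda;\pi_i;\mu_j)\in\nu_{n,1^s,\ri}(k)$. Indeed, on $\PP^\lambda$ the pullback of $\cO_{\PP^m}(1)$ from a distinct $\PP^m$-factor coincides with $\cO(H_m)$ for a hyperplane $H_m$; since such hyperplanes lie on different factors of $\PP^\lambda$, the sum $\sum H_m$ is automatically strict normal crossing and $\psi$ is well defined. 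Multiplying by $[N\to X]\in\omega_*(X)$ via the product $N\times\PP^\lambda$ supplies the desired lift in $\nu_{n,1^s,\ri}(X)$.

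Injectivity is the main work, and my plan is to prove the analogue of Theorem \ref{nriZ} for $\nu$: that $\nu_{n,1^s,\ri}(k)$ is a free $\omega_*(k)$-module with basis $\{\psi(\lambda;\pi_i;\mu_j)\}_{(\lambda;\pi_i;\mu_j)\in\cP'_{n,s,\ri}}$, together with the extension-of-scalars identity $\nu_{*,1^s,\ri}(X)\cong\omega_*(X)\otimes_{\omega_*(k)}\nu_{*,1^s,\ri}(k)$ proved in parallel to Theorem \ref{thm:scalar}. Linear independence of the $\psi$-classes follows for free, because their images under the natural map are the $\phi$-basis of $\omega_{n,1^s,\ri}(k)$. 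What remains is generation: every $[M,D_i,E_j]\in\nu_{n,1^s,\ri}(k)$ must be an $\omega_*(k)$-combination of $\psi$-classes. Following the Lee--\Pand\ strategy reviewed in Section \ref{bundles}, I would first birationally replace each $E_j$ by a direct sum of line bundles via iterated projectivization (\cite[Lemma~13]{LeeP}), then reduce $M$ to a product of projective spaces via birational double point relations, then move each $D_i$ within its linear system by a Bertini pencil to a hyperplane on the appropriate $\PP^m$-factor. The hard part will be carrying out this bookkeeping while preserving the strict normal crossing hypothesis: a generic pencil between two SNC configurations can fail to have SNC members everywhere, so one must blow up base loci, track exceptional divisors (either including them in the list $\{D_i\}$ and matching them against $\psi$-classes with shorter partitions, or showing they cancel against contributions from the $\omega_*(k)$-factor), and verify at each step that the double point degeneration axioms of \eqref{eq:d1ri} remain valid for $\nu$. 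Once generation is established, a rank count using Theorem \ref{nriQ} over $\QQ$, lifted to $\ZZ$ via freeness of $\omega_*(k)\cong\LL_*$, together with the extension-of-scalars identity, yields that the natural map is an isomorphism both as abelian groups and as $\omega_*(k)$-modules.
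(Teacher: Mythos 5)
Your well-definedness and surjectivity steps coincide with the paper's, and your overall architecture for injectivity --- show $\nu_{*,1^s,\ri}(k)$ is generated over $\omega_*(k)$ by the $\psi$-classes, get freeness by mapping onto the $\phi$-basis of Theorem \ref{nriZ}, then pass to general $X$ by an extension-of-scalars statement parallel to Theorem \ref{thm:scalar} --- is also the paper's. The gap is in the one step that carries all the weight: generation. Your plan is to reduce the bundles and the variety by the Lee--\Pand\ degenerations first and then move each $D_i$ ``within its linear system by a Bertini pencil to a hyperplane on the appropriate $\PP^m$-factor.'' This cannot work as stated: a pencil move never changes the class $\cO(D_i)$, and after any reduction of $M$ to $\PP^{\lambda}$ that class is in general of nontrivial (multi)degree --- think of a conic in $\PP^2$ or a $(1,1)$-divisor on $\PP^1\times\PP^1$ --- whose linear system contains no hyperplane pulled back from a single factor, so no amount of SNC bookkeeping along pencils reaches the basis elements $\psi(\lambda;\pi_i;\mu_j)$. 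In addition, the degenerations you want to import from \cite{LeeP} to simplify $M$ and the $E_j$ are relations in $\omega$, not in $\nu$: to invoke them you must exhibit divisors on each degeneration total space restricting transversally to all four terms of \eqref{eq:d1ri}, and your outline does not say how the given $D_i$ are carried along these families.

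The missing idea is the paper's Proposition \ref{n1riZ}: degeneration to the normal cone of $D_s$ inside $M$. That single double point relation trades $[M,\{D_i\}_{i=1}^{s},\{E_j\}]$ for the term with $D_s=0$ plus classes supported on the $\PP^1$-bundle $\PP(\cO\oplus N_{M/D_s})\to D_s$ carrying the section at infinity as the last divisor; in other words, it converts the divisor $D_s$ into line-bundle data on the $(n-1)$-dimensional variety $D_s$. The subgroup of such $\PP^1$-bundle classes receives a surjection from $\nu_{n-1,1^{s-1},1,\ri}(k)$, so one inducts on $s$ and on dimension, and a second degeneration to the normal cone (of a hyperplane in $\PP^{m+1}$) removes the remaining classes with a nontrivial section; only after the divisors have been eliminated this way do the $\omega$-structure theorems enter. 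With that replacement your outline closes; without it, the generation step --- and hence both injectivity and the extension-of-scalars claim for $\nu$, whose proof in the paper also rests on Proposition \ref{n1riZ} taken over the base $X$ --- remains unproven.
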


 It is easy to see the map  sends $\psi(\lambda; \pi_i; \mu_j)$ to $\phi(\lambda, \pi_i, \mu_j)$.
Therefore Theorem \ref{isom} together with Theorem \ref{nriQ} and Theorem \ref{nriZ} give a natural basis of  $\nu_{n,1^s,\ri}(k)$. 
\begin{coro}\label{vdim} The dimension of $\nu_{n,1^s,\ri}(k)\otimes_{\ZZ} \QQ$ is the number of monomials of degree $n$ in the Chern classes of tangent bundle $T_M$, $\cO(D_i)$ and $E_j$. Furthermore, for all $s, r_i\geq 0$,
$$\nu_{n,1^s,\ri}(k)\otimes_{\ZZ} \QQ
=\bigoplus_{(\lambda; \pi_1, \ldots, \pi_s; \mu_1,\ldots, \mu_k) \in \cP'_{n, s, \ri} } \QQ\cdot\psi(\lambda; \pi_1, \ldots, \pi_s; \mu_1,\ldots, \mu_k). $$ 
The  $\omega_*(k)$-module $\nu_{*,1^s,\ri}(k)$ is  free with basis
$$\nu_{*,1^s,\ri}(k)=\bigoplus_{(\pi_1, \ldots, \pi_s, \mu_1, \ldots, \mu_k)} \omega_*(k)\cdot \psi(\cup \pi_i\cup \mu_i;\pi_1, \ldots, \pi_s;\mu_1, \ldots, \mu_k)$$
where the sum is over partitions $(\pi_1, \ldots, \pi_s, \mu_1, \ldots, \mu_k)$ with length $l(\pi_i)\leq 1$, $l(\mu_i)\leq r_i$. 
\end{coro}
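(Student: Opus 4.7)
The plan is to reduce the isomorphism to parallel structural results for $\nu_{*,1^s,\ri}$ and $\omega_{*,1^s,\ri}$. The map sends the proposed generator $\psi(\lambda;\pi_i;\mu_j)\in\nu_{n,1^s,\ri}(k)$ to $\phi(\lambda,\pi_i,\mu_j)\in\omega_{n,1^s,\ri}(k)$, and by Theorems \ref{nriQ} and \ref{nriZ} the latter form a basis. It therefore suffices to prove that the $\psi(\lambda;\pi_i;\mu_j)$ form a basis of $\nu_{n,1^s,\ri}(k)$ both rationally and integrally, and then to extend scalars to arbitrary $X\in\sch$ following the template of Theorem \ref{thm:scalar}.

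First I would verify the map is a well-defined homomorphism of abelian groups and of $\omega_*(k)$-modules. The only nontrivial point is that a double point relation in $\nu$ defined by $(\pi,\{\cD_j\},\{\cE_i\})$ produces a double point relation in $\omega$ defined by $(\pi,\{\cO(\cD_j)\},\{\cE_i\})$; this follows from the transversality hypotheses, which give $\cO(\cD_j)|_{Y_i}=\cO(D_{ij})$ and compatible pullbacks from $B$ to $Y_3$.

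Next I would carry over the proof of Theorem \ref{nriQ} to establish $\nu_{n,1^s,\ri}(k)\otimes_\ZZ\QQ=\bigoplus_{(\lambda;\pi_i;\mu_j)\in\cP'_{n,s,\ri}}\QQ\cdot\psi(\lambda;\pi_i;\mu_j)$. After passing to the quotient $\widetilde\nu_{*,1^s,\ri}(k)=\nu_{*,1^s,\ri}(k)/\fm\cdot\nu_{*,1^s,\ri}(k)$, the required input is a divisor-enhanced analog of \cite[Lemma~13]{LeeP} and \cite[Proposition~10]{LeeP}: for smooth SNC divisors $D_1+\cdots+D_s$ and vector bundles $E_1,\ldots,E_k$ on $M$, a controlled sequence of blow-ups yields $\bar M\to M$ carrying pulled-back hyperplane divisors $\hat D_i$ and direct sums of pullback line bundles that represent the same class in $\widetilde\nu$. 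The induction on $n$ used in Theorem \ref{nriQ} then reduces any class to a single $\psi(\lambda;\pi_i;\mu_j)$, and the integral free-module statement and the extension of scalars to general $X$ follow verbatim from the arguments of Theorems \ref{nriZ} and \ref{thm:scalar}.

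The main obstacle is the divisor-enhanced generator reduction. The Lee-Pandharipande construction replaces vector bundles by sums of line bundles pulled back from projective-space factors via blow-ups; the new ingredient here is that each smooth divisor $D_i$ must simultaneously be replaced by a pullback of a hyperplane while preserving smoothness of $D_i$, transversality with the blow-up centers, and the SNC condition on $\sum\hat D_i$. I expect to handle this by choosing centers that meet every stratum of $D_1+\cdots+D_s$ transversally, and by inserting further blow-ups to resolve any accidental tangencies produced on exceptional divisors; each such birational modification is comparable to the original datum via an explicit double point relation in $\nu$ coming from a blow-up along a smooth center transverse to the existing divisors. Once this reduction is in place, the matching of bases delivers the isomorphism.
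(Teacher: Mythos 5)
Your overall architecture (well-definedness of $[M,D_i,E_j]\mapsto[M,\cO(D_i),E_j]$, surjectivity onto the $\phi$-basis, injectivity by matching against the basis of $\omega_{n,1^s,\ri}(k)$ from Theorems \ref{nriQ} and \ref{nriZ}, then extension of scalars) agrees with the paper. But the step you flag as the ``main obstacle'' --- the divisor-enhanced analog of \cite[Lemma~13, Proposition~10]{LeeP} --- is exactly the heart of the matter, and the mechanism you sketch does not supply it. Blow-ups of $M$ along smooth centers transverse to the strata of $D_1+\cdots+D_s$, even combined with further blow-ups to fix accidental tangencies, only replace each $D_i$ by its strict transform; no sequence of such modifications turns an arbitrary smooth divisor into the pullback of a hyperplane from a projective-space factor, and the Lee--Pandharipande line-bundle manipulations (writing $L=A\otimes B^{-1}$ and degenerating sections) have no direct counterpart once the datum is an actual divisor rather than its class $\cO(D)$: in $\nu_{n,1^s,\ri}$ the only available relations are double point relations in which the divisors restrict, so you need a genuine degeneration of the pair $(M,D_i)$, not just birational modifications of $M$.

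The paper supplies this missing mechanism in Proposition \ref{n1riZ}: the degeneration to the normal cone of $D_s$ (the blow-up of $M\times\PP^1$ along $D_s\times\{\infty\}$) gives a double point relation expressing $[M,\{D_i\}_{i=1}^s,\{E_j\}]$ in terms of the class with $D_s=0$ and classes of the form $[\PP_{D_s}(\cO\oplus N),\,p^{-1}(D_i\cap D_s),\,D_\infty,\,p^*E_j]$ supported on $\PP^1$-bundles over $D_s$. The latter are organized into a subgroup $P$ that receives a surjection from $\nu_{n-1,1^{s-1},1,\ri}(k)$, and a further normal-cone degeneration of a hyperplane in $\PP^{m+1}$ reduces the resulting generators back to the $\psi(\lambda;\pi_i;\mu_j)$; induction on $s$ (and on $n$, as in Theorem \ref{nriQ}) then yields the $\ZZ$-generation statement, from which your remaining steps (rational basis by counting against $|\cP'_{n,s,\ri}|=|\cP_{n,1^s,\ri}|$, freeness as in Theorem \ref{nriZ}, extension of scalars as in Theorem \ref{thm:scalar}) do go through. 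Without this normal-cone argument, or an equivalent way to trade a divisor for relative data over it, your reduction to the generators $\psi(\lambda;\pi_i;\mu_j)$ is unproven, and with it your route becomes essentially the paper's proof.
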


\subsection{Proof of Theorem \ref{isom}}

\begin{lemm}
The map  $[f:M\to X, D_i, E_j]$ to $[f: M\to X, \cO(D_i), E_j]$ sends  double point relations to double point relations. Hence 
 $$\nu_{n,1^s,\ri}(X)\to \omega_{n,1^s,\ri}(X)$$ is a well-defined morphism of abelian groups and $\omega_*(k)$-modules. 
\end{lemm}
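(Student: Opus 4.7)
The plan is to verify directly that the assignment $D\mapsto \cO(D)$ carries the data defining a double point relation in $\nu_{n,1^s,\ri}(X)$ to the data defining a double point relation in $\omega_{n,1^s,\ri}(X)$. On generators, the rule $[f\colon M\to X, D_i, E_j]\mapsto [f\colon M\to X, \cO(D_i), E_j]$ is unambiguous, because an isomorphism of lists in $\cN_{n,1^s,\ri}(X)$ identifies each $D_i$ with the corresponding divisor on the target, and taking associated line bundles is functorial under such identifications.

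Fix a generator of $\cQ_{n,1^s,\ri}(X)$ produced by a projective morphism $\pi\colon Y\to X\times\PP^1$, smooth divisors $\{\cD_j\}_{j=1}^s$, and vector bundles $\{\cE_i\}_{i=1}^k$ on $Y$ satisfying all the listed transversality and normal-crossing hypotheses. The replaced data $(\pi,\{\cO(\cD_j)\},\{\cE_i\})$ is now a valid input for the bundles-only double point relation \eqref{eq:n1r}, since each $\cO(\cD_j)$ is a line bundle on $Y$. So this data produces a specific element of $\cR_{n,1^s,\ri}(X)$, and the task reduces to matching that element term by term with the image under $D\mapsto\cO(D)$ of the chosen element of $\cQ_{n,1^s,\ri}(X)$.

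For the indices $i=0,1,2$, the transversality of $\cD_j$ with $Y_i$ guarantees that $D_{ij}=\cD_j\cap Y_i$ is a Cartier divisor on $Y_i$ with $\cO_Y(\cD_j)|_{Y_i}\cong \cO_{Y_i}(D_{ij})$. For the $Y_3$ term, restrict first to $B$: transversality of $\cD_j$ with $B$ gives $\cO_Y(\cD_j)|_B\cong \cO_B(\cD_j\cap B)$, and since $D_{3j}$ is by definition the inverse image of $\cD_j\cap B$ along the projective bundle map $Y_3\to B$, pulling $\cO_B(\cD_j\cap B)$ back to $Y_3$ produces $\cO_{Y_3}(D_{3j})$. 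Thus the image of the chosen generator lies in $\cR_{n,1^s,\ri}(X)$, so the map descends to a well-defined group homomorphism; compatibility with external products and pullbacks makes it a homomorphism of $\omega_*(k)$-modules. The only mildly delicate verification is the $Y_3$ term, which comes down to the elementary identity $\cO(f^{-1}D)\cong f^*\cO(D)$ for the pullback of a Cartier divisor under $Y_3\to B$; every other step is an immediate consequence of transversality and the functoriality of $\cO(-)$.
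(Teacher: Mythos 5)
Your proposal is correct and follows essentially the same route as the paper: the paper's proof likewise reduces the claim to the identities $\cO(\cD_j)|_{Y_i}=\cO(D_{ij})$ and $\cO(\cD_j)|_{B}=\cO(\cD_j\cap B)$ guaranteed by transversality, with the $Y_3$ term handled by pulling back along $Y_3\to B$, and then notes compatibility with addition and the $\omega_*(k)$-module structure. Your write-up simply makes the term-by-term matching (especially the $Y_3$ case via $\cO(f^{-1}D)\cong f^*\cO(D)$) more explicit than the paper does.
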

\begin{proof} 
If $(\pi, \cD_i, \cE_j)$ gives a double point relation \eqref{eq:d1ri}, then $(\pi, \cO(\cD_i), \cE_j)$ gives a double point relation in $\omega_{n,1^s,\ri}(X)$. 
This is because $\cD_j$ intersect transversally with $Y_i$ and  $B$ along smooth divisors  and  $\cO(\cD_j)|_{Y_i}=\cO(D_{ij})$, $\cO(\cD_i)|_{B}=\cO(\cD_i \cap B)$. 
The addition and $\omega_*(k)$-module structure on both sides are compatible so the map induces a  morphism of abelian groups and $\omega_*(k)$-modules from $\nu_{n,1^s,\ri}(X)\to \omega_{n,1^s,\ri}(X)$.  
\end{proof}

\begin{lemm}
The group morphism  
 $\nu_{n,1^s,\ri}(k)\to \omega_{n,1^s,\ri}(k)$ is surjective. \end{lemm}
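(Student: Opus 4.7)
The plan is to lift the natural generators of $\omega_{n,1^s,\ri}(k)$ to classes in $\nu_{n,1^s,\ri}(k)$ using explicit hyperplanes. By Theorem \ref{nriZ} applied with the $s$ rank-one slots together with the $k$ slots of ranks $\ri$, the group $\omega_{*,1^s,\ri}(k)$ is freely generated as an $\omega_*(k)$-module by the elements
\[
\phi\bigl(\cup \pi_i \cup \mu_j;\, \pi_1, \ldots, \pi_s;\, \mu_1, \ldots, \mu_k\bigr),
\]
ranging over tuples with $l(\pi_i)\leq 1$ and $l(\mu_j)\leq r_j$. It therefore suffices to exhibit, for each such generator and each $[V]\in \omega_*(k)$ with $V$ a smooth projective $k$-variety, a preimage of $[V]\cdot \phi(\cdots)$ in $\nu_{n,1^s,\ri}(k)$.

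For each nonempty $\pi_i=(m_i)$, I would take $H_i$ to be the pullback via the projection $\PP^{\lambda}\to \PP^{m_i}$ of a hyperplane in $\PP^{m_i}$, and set $H_i=0$ when $\pi_i=\varnothing$. This is precisely the construction of $\psi(\cup \pi_i \cup \mu_j;\, \pi_i;\, \mu_j)$ in the paper, and since $\cO_{\PP^{\lambda}}(H_i)$ is the pullback of $\cO_{\PP^{m_i}}(1)$, the class $\psi(\cup \pi_i \cup \mu_j;\, \pi_i;\, \mu_j)\in \nu_{n,1^s,\ri}(k)$ maps to the corresponding $\phi$ under the natural morphism. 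For a product generator $[V]\cdot \phi(\cdots)$, I would take the preimage $[V\times \PP^{\lambda},\, V\times H_i,\, p_2^* E_j]\in \nu_{n,1^s,\ri}(k)$; the $\omega_*(k)$-actions on both sides are compatible because both are induced by external product with smooth varieties, and $\cO(V\times H_i)=p_2^*\cO(H_i)$.

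The one point that needs verification is that $\sum_i H_i$ (and its pullback $\sum_i V\times H_i$) is a strict normal crossing divisor, so that the proposed lift actually lies in $\nu_{n,1^s,\ri}(k)$. This is where the combinatorics is used: the subpartition condition defining $\cP'_{n,s,\ri}$ forces the nonempty $\pi_i$ to correspond to \emph{distinct} factors of $\PP^{\lambda}$, so each $H_i$ is pulled back from a different $\PP^{m_i}$, and any intersection $H_{i_1}\cap \cdots \cap H_{i_t}$ is automatically smooth of codimension $t$; crossing with the smooth variety $V$ preserves this property. Thus every generator of $\omega_{n,1^s,\ri}(k)$ lies in the image, yielding surjectivity. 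The main (but mild) obstacle is ensuring the SNC condition from the subpartition combinatorics; once that is handled, surjectivity is an immediate consequence of the generation statement in Theorem \ref{nriZ}.
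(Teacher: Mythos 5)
Your proof is correct and follows essentially the same route as the paper: invoke Theorem \ref{nriZ} for type $(1^s,\ri)$ to get the generators $\omega_*(k)\cdot\phi(\cup\pi_i\cup\mu_j,\pi_i,\mu_j)$, and observe that each is the image of the corresponding $\psi$ class (multiplied into the $\omega_*(k)$-module structure). Your extra checks — that the subpartition condition forces the $H_i$ to come from distinct factors of $\PP^{\lambda}$ so that $\sum_i H_i$ is strict normal crossing, and that the lift of $[V]\cdot\phi(\cdots)$ is $[V\times\PP^{\lambda}, V\times H_i, p_2^*E_j]$ — are details the paper leaves implicit in the definition of $\psi$ and the module map, and they are verified correctly.
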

\begin{proof}
From  Theorem \ref{nriZ}, $\omega_{n,1^s,\ri}(k)$ is generated by $\omega_*(k)\cdot\phi(\lambda, \pi_i, \mu_j)$ over $\ZZ$.
Since  $\phi(\lambda, \pi_i, \mu_j)$ is the image of  $\psi(\lambda; \pi_i; \mu_j)$ the map is surjective.  
\end{proof}

\begin{prop} \label{n1riZ}
For $s, r_i\geq 0$, $\nu_{*,1^s,\ri}(k)$ is generated by  $$\left\{\left.\omega_*(k)\cdot \psi(\cup \pi_i \cup \mu_j; \pi_1, \ldots, \pi_s; \mu_1,\ldots, \mu_k)\right. | \ l(\pi_i)\leq 1, l(\mu_j)\leq r_j\right\} $$ over $\ZZ$. \end{prop}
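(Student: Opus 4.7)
The plan is to mimic the argument in the proof of Theorem \ref{nriQ}, adding the bookkeeping needed to carry the divisors $D_1,\ldots,D_s$ through the reductions. Let $\fm\subset \omega_*(k)$ denote the ideal generated by classes of positive-dimensional smooth varieties and form the graded quotient
$$\widetilde{\nu}_{n,1^s,\ri}(k):=\nu_{n,1^s,\ri}(k)/\fm\cdot \nu_{n,1^s,\ri}(k),$$
so that once $\widetilde{\nu}_{n,1^s,\ri}(k)$ is shown to be generated over $\ZZ$ by the specified $\psi$-elements, the full statement follows by induction on $n$: elements in $\fm\cdot\nu_{n,1^s,\ri}(k)$ come from $\omega_{n-n'}(k)\cdot\nu_{n',1^s,\ri}(k)$ with $n'<n$, and the inductive hypothesis rewrites each factor on the right as desired, while $\omega_*(k)\cong\LL_*$ supplies the scalar $\omega_*(k)$-action.

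First I would split each vector bundle $E_j$ into a direct sum of line bundles modulo $\fm$. Given a class $[M\to M, D_i, E_j]\in\nu_{*,1^s,\ri}(M)$, apply Lemma 13 of \cite{LeeP} iteratively, replacing $M$ by the successive flag bundles $\mathrm{Flag}(E_j)\to M$. Because each such flag bundle projection is smooth, the pullbacks of the $D_i$ remain smooth and their sum remains a strict normal crossing divisor, so the construction lives in $\nu_{*,1^s,1^{\sum r_j}}$. The upshot is that modulo $\fm$,
$$[M\to M,\ D_i,\ E_j]=[\bar{M}\to M,\ \bar{D}_i,\ \textstyle\bigoplus_j L_{1j},\ \ldots,\ \bigoplus_j L_{kj}]$$
in $\widetilde{\nu}_{*,1^s,\ri}(M)$, and after pushforward to $X=\Spec k$ the same identity holds in $\widetilde{\nu}_{*,1^s,\ri}(k)$. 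Forming direct sums of line bundles turns every double-point relation in $\nu_{n,1^{s+\sum r_j}}(k)$ into one in $\nu_{n,1^s,\ri}(k)$, so this yields a surjection
$$\widetilde{\nu}_{n,1^{s+\sum r_j}}(k)\twoheadrightarrow \widetilde{\nu}_{n,1^s,\ri}(k).$$

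Next I would establish the analog of \cite[Proposition~10]{LeeP} in the presence of divisors: $\widetilde{\nu}_{n,1^{s+\sum r_j}}(k)$ is generated by classes of the form $\psi(\lambda;\pi'_1,\ldots,\pi'_{s+\sum r_j})$ with $\cup\pi'_i=\lambda$. The argument in \cite{LeeP} proceeds by a double-point degeneration built from the blowup of $M\times\PP^1$ along $M\times\{0\}$ followed by projection to a product of projective spaces; the key point is that for each line bundle $L_i$ on $M$ whose associated divisor meets the other $D_j$'s transversely, a standard deformation to the normal cone reduces $[M,\ldots,L_i]$ to a class in which either $L_i$ is trivial (when its part of $\lambda$ is deleted) or the underlying variety is $\PP^{\lambda}$ with $L_i=\cO(1)$ pulled back from a factor. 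Because all our divisors and line bundles already sit inside a strict normal crossing configuration, the same degenerations apply verbatim and the transversality required by \eqref{eq:d1ri} is preserved.

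The main obstacle will be verifying that the birational modifications and normal-cone degenerations used in these two reduction steps respect the strict normal crossing hypothesis on $\sum D_i$, so that every intermediate expression is actually a legitimate class in $\nu$. I expect that flag bundles handle the first step cleanly and that one can blow up the base locus to put the divisor system in general position before performing the degeneration in the second step, at the cost only of adding trivial line bundles, which act as identities in the relevant $\psi$-classes. Combining the surjection and the line-bundle generation result then gives the claim modulo $\fm$, and the induction on $n$ completes the proof.
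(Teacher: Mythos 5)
There is a genuine gap. Your reduction of the bundles $E_j$ to sums of line bundles via flag bundles is fine (the $D_i$ pull back to an snc configuration under the smooth flag projections), but after that step the hard content of the proposition is still untouched: you must explain how the actual divisors $D_1,\ldots,D_s$ — which are genuine subvarieties, not just line-bundle classes — get transformed into the standard form required by the $\psi$-classes, namely either the zero divisor or the pullback $H_i$ of a hyperplane from a $\PP^{m}$ factor of $\PP^{\lambda}$. Your second step asserts that the degenerations behind \cite[Proposition~10]{LeeP} ``apply verbatim,'' but that argument only manipulates line bundles (writing them as differences of very ample ones and restricting them along double point degenerations); with honest divisors present one must track where each $D_i$ goes under every degeneration, keep $\sum D_i$ strict normal crossing and transverse to all the pieces $Y_1$, $Y_2$, $B$ as demanded by \eqref{eq:d1ri}, and end with a divisor that is literally a hyperplane pullback. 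That is exactly the new geometric input the proposition requires, and your proposal supplies no mechanism for it; the suggestion to ``blow up the base locus'' would change $M$ and the $D_i$ birationally in a way that is not obviously compatible with the double point relations, and adding trivial line bundles does not repair this.

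For comparison, the paper's proof produces this mechanism explicitly and inducts on $s$ rather than passing to $\widetilde{\nu}$: it degenerates to the normal cone of $D_s$, which replaces $[M,\{D_i\},\{E_j\}]$ by a class with $D_s=0$ together with classes on the $\PP^1$-bundle $\PP_{D_s}(\cO\oplus N)$ in which $D_s$ has become a section; it then shows the subgroup $P$ of such $\PP^1$-bundle classes receives a surjection from $\nu_{n-1,1^{s-1},1,\ri}(k)$ (one fewer divisor, one extra line bundle, one lower dimension), and finally uses a second degeneration — the normal cone of a hyperplane in $\PP^{m+1}$ — to convert the resulting $\PP_{\PP^m}(\cO\oplus\cO(1))$ classes with a section into classes of the allowed form. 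Your outline has no analogue of this two-stage reduction of the divisor data, so as written the proof does not go through.
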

\begin{proof}

Let  $P$ be the subgroup of $\nu_{n,1^s,\ri}(k)$ generated by elements of the form $[\PP_V(\cO\oplus N), \{p^{-1}D_i\}_{i=1}^{s-1},  s, p^*{E_j} ]$ where $V$ is a smooth variety of dimension $n-1$, $N$ is a line bundle   and $E_j $ are vector bundles of rank $r_j$ on $V$,  $D_i$ are divisors on $V$, and  $s$ the section at infinity of $p: \PP_V(\cO\oplus N)\to V$.

Consider an element $[M, \{D_i\}_{i=1}^s, \{E_j\}_{j=1}^k]$ in  $\nu_{n,1^s,\ri}(k)$. 
If $D_s\neq 0$ and $D$ is an irreducible component of $D_s$, the deformation to the normal cone of $D$ yields a double point relation 
\begin{align*}&[M, \{D_i\}_{i=1}^s, \{E_j\}_{j=1}^k]
-[M, \{D_i\}_{i=1}^{s-1}, D_s-D,  \{E_j\}_{j=1}^k] \\
-&[\PP_{D},  \{p^{-1}(D_i\cap D)\}_{i=1}^{s-1}, D_{\infty}, \{p^*{E_j|_{D}}\}_{j=1}^k  ]
+[\PP_{D}, \{p^{-1}(D_i\cap D)\}_{i=1}^{s-1}, 0, \{p^*{E_j|_{D}}\}_{j=1}^k ]\end{align*}
where $p: \PP_{D}:=\PP(\cO\otimes N_{M/{D}})\to D$  and $D_{\infty} \cong D$ is the section of $\PP_{D}$ at infinity. 
By induction on the number of components of $D_s$,  $\nu_{n,1^s,\ri}(k)$ is generated by elements with $D_s=0$ and  $P$. 

Now we treat elements in $P$. 
Suppose 
$$[V_0, D_{0i}, N_0,  E_{0j}]-[V_1, D_{1i}, N_1,  E_{1j}]
-[V_2, D_{2i}, N_2,E_{2j}]+[V_3, D_{3i}, N_3, E_{3j}]$$ is a double point relation in $\cN^+_{n-1, 1^{s-1}, 1, r_1, \ldots, r_k}(k)$  given by a double point degeneration $\pi: \cV\to \PP^1$, divisors $\cD_i$ of $\cV$, a line bundle $\cN$ and vector bundles $\cE_j$ on $\cV$. 
Then
$\PP_{\cV}(\cO\oplus N)\to \PP^1$, the section $s=\PP_{\cV}(\cO_{\cV})$ and the pullback of $\cD_i$ and $\cE_j$ yield
a double point relation 
 \begin{align*}[\PP_{V_0}(\cO\oplus N_0),p^{-1}D_{0i} ,s_0, p^*(E_{0j})]
 -[\PP_{V_1}(\cO\oplus N_1), p^{-1}D_{1i}, s_1, p^*(E_{1j})]\\
-[\PP_{V_2}(\cO\oplus N_2), p^{-1}D_{2i}, s_2, p^*(E_{2j})]
+[\PP_{V_3}(\cO\oplus N_3), p^{-1}D_{3i}, s_3, p^*(E_{3j})].
\end{align*}
As a result, $P$ naturally inherits double point relations from   $ \cN^+_{n-1, 1^{s-1},1, \ri}(k)$ so that  the map
$$[V, \{D_i\}_{i=1}^{s-1}, N, E_j]\to 
[\PP_V(\cO\oplus N), \{p^{-1}D_i\}_{i=1}^{s-1},  s, p^*{E_j} ].$$
defines a surjective homomorphism from $\nu_{n-1, 1^{s-1},1, \ri}(k)$ to $P$.

By induction on $s$, $\nu_{n-1, 1^{s-1},1, \ri}(k)$ is generated by $\omega_*(k)\cdot \psi(\cup \pi_i \cup \mu_j; \pi_i; \pi_s,  \mu_j)$ which has the form $[M]\cdot [\PP^{\cup \pi_i \cup \mu_j}, H_i, L_s, E_j]$. 
If $L_s$ is trivial,  $[M]\cdot [\PP^{\cup \pi_i \cup \mu_j}, H_i, L_s, E_j]$ maps to $$[M]\cdot [\PP^{\cup \pi_i \cup \mu_j}\times \PP^1, p^{-1}H_i, \PP^{\cup \pi_i \cup \mu_j}\times \{\infty\}, p^*E_j].$$ 
If $L_s$ is not trivial, i.e. when $\pi_s=m>0$,   $[M]\cdot [\PP^{\cup \pi_i \cup \mu_j}, H_i, L_s, E_j]$ maps to $$[M]\cdot [\PP_{\PP^{\cup \pi_i \cup \mu_j}}(\cO\oplus L_m), p^{-1}H_i, s, p^*E_j].$$
where $p: \PP_{\PP^{\cup \pi_i \cup \mu_j}}(\cO\oplus L_m) \to \PP^{\cup \pi_i \cup \mu_j}$. 
In fact $\PP_{\PP^{\cup \pi_i \cup \mu_j}}(\cO \oplus L_m)$
is the product of $\PP^{\lambda'}$ and  $\PP_{\PP^m}(\cO \oplus \cO_{{\PP^m}}(1)) $ where $\lambda'$ is the union of all $\pi_i$ and $\mu_j$ except $\pi_s$. 
The degeneration to the normal cone of a hyperplane $H_{m+1}$ in $\PP^{m+1}$ yield the double point relation 
$$[\PP^{m+1}, H_{m+1}]-[\PP^{m+1}, 0]-[\PP_{\PP^m}(\cO \oplus \cO_{{\PP^m}}(1)), s]+[\PP_{\PP^m}(\cO \oplus \cO_{{\PP^m}}(1)), 0]$$
Multiplying the family by $M$ and $\PP^{\lambda'}$ and adding $p^{-1}H_i$, $p^*E_j$ give a double point relation which expresses 
$[M]\cdot [\PP_{\PP^{\cup \pi_i \cup \mu_j}}(\cO\oplus L_m), p^{-1}H_i, s, p^*E_j]$ as the sum of elements with $D_s=0$ or 
in $\left\{\left.\omega_*(k)\cdot \psi(\cup \pi_i \cup \mu_j; \pi_i; \mu_j)\right| 
(\cup \pi_i \cup \mu_j; \pi_i; \mu_j) \in \cP'_{*, s, \ri}\right\}$. Therefore $P$ is spanned by these elements over $\ZZ$. 

The map of trivially inserting a zero divisor  $$[M, \{D_i\}_{i=1}^{s-1}, \{E_j\}_{j=1}^k]
\to [M, \{D_i\}_{i=1}^{s-1}, 0,  \{E_j\}_{j=1}^k] $$
defines an isomorphism from 
$\nu_{n,1^{s-1},\ri}(k)$ to the subgroup of $\nu_{n,1^{s},\ri}(k)$ with $D_s=0$. 
By  induction on $s$, $\nu_{n,1^{s-1},\ri}(k)$  is generated by 
$\omega_*(k)\cdot \psi(\cup \pi_i \cup \mu_j; \{\pi_i\}_{i=1}^{s-1}; \mu_j)$ which are mapped to $\omega_*(k)\cdot \psi(\cup \pi_i \cup \mu_j; \{\pi_i\}_{i=1}^{s-1}, 0; \mu_j)$.
They generate the subgroup of $\nu_{n,1^{s-1},\ri}(k)$ with $D_s=0$ and 
this completes our proof.  
\end{proof}

\begin{prop}
The group morphism  
 $\nu_{n,1^s,\ri}(k)\to \omega_{n,1^s,\ri}(k)$ is injective. \end{prop}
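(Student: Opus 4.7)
The plan is to deduce injectivity from the surjectivity already established together with the generating result of Proposition \ref{n1riZ}, by comparing against the known free basis of $\omega_{*,1^s,\ri}(k)$ given by Theorem \ref{nriZ}. The strategy is essentially a pigeonhole-type argument: the map is surjective, and the set of generators of the source indexed by partition lists in $\cP'_{n,s,\ri}$ is sent bijectively onto the known basis of the target, so it must already be a basis of the source.

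More precisely, first I would combine Proposition \ref{n1riZ} with the $\omega_*(k)$-module structure to conclude that $\nu_{*,1^s,\ri}(k)$ is generated as an $\omega_*(k)$-module by the family
\[
\left\{\psi(\cup\pi_i\cup\mu_j;\pi_1,\ldots,\pi_s;\mu_1,\ldots,\mu_k)\ \left|\ l(\pi_i)\leq 1,\ l(\mu_j)\leq r_j\right.\right\}.
\]
Next, applying the map $\nu_{n,1^s,\ri}(k)\to \omega_{n,1^s,\ri}(k)$ sends each generator $\psi(\cup\pi_i\cup\mu_j;\pi_i;\mu_j)$ to $\phi(\cup\pi_i\cup\mu_j,\pi_i,\mu_j)$, and by Theorem \ref{nriZ} these images form a free $\omega_*(k)$-basis of $\omega_{*,1^s,\ri}(k)$ (after observing that the $\pi_i$ with $l(\pi_i)\leq 1$ play exactly the role of the length-one rank-one blocks in the statement of Theorem \ref{nriZ}).

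Then injectivity follows by a standard relation-lifting argument: suppose
\[
\sum_{(\lambda;\pi_i;\mu_j)\in\cP'_{n,s,\ri}} a_{(\lambda;\pi_i;\mu_j)}\cdot\psi(\lambda;\pi_i;\mu_j)=0
\]
is any $\omega_*(k)$-linear relation in $\nu_{n,1^s,\ri}(k)$; applying the morphism yields
\[
\sum_{(\lambda;\pi_i;\mu_j)\in\cP'_{n,s,\ri}} a_{(\lambda;\pi_i;\mu_j)}\cdot\phi(\lambda,\pi_i,\mu_j)=0
\]
in $\omega_{n,1^s,\ri}(k)$, which forces every coefficient $a_{(\lambda;\pi_i;\mu_j)}$ to vanish by the freeness in Theorem \ref{nriZ}. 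Hence the generators in the source are $\omega_*(k)$-linearly independent, so they are in fact a free basis, and the morphism is an isomorphism of $\omega_*(k)$-modules (in particular of abelian groups). The pre-existing surjectivity lemma then upgrades to injectivity, completing the proof of Theorem \ref{isom}.

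There is essentially no remaining obstacle at this point: the only mildly delicate bookkeeping is checking that the indexing sets of generators of $\nu_{*,1^s,\ri}(k)$ (partition lists with $l(\pi_i)\leq 1$) correspond exactly under $\psi\mapsto\phi$ to the indexing set of the basis of $\omega_{*,1^s,\ri}(k)$ produced by Theorem \ref{nriZ}, which is transparent from the definitions. All the real work was done in Proposition \ref{n1riZ} (reducing every class in $\nu_{n,1^s,\ri}(k)$ to the $\psi$ generators via degeneration-to-the-normal-cone arguments); the injectivity statement here is then a clean consequence of that reduction combined with Theorem \ref{nriZ}.
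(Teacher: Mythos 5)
Your proposal is correct and follows essentially the same route as the paper: use Proposition \ref{n1riZ} to reduce every class in $\nu_{n,1^s,\ri}(k)$ to an $\omega_*(k)$-combination of the $\psi(\cup\pi_i\cup\mu_j;\pi_i;\mu_j)$, then observe that these map to the $\phi(\cup\pi_i\cup\mu_j,\pi_i,\mu_j)$, which by Theorem \ref{nriZ} satisfy no nontrivial $\omega_*(k)$-relation, so any kernel element must have vanishing coefficients. The only cosmetic difference is that you invoke the surjectivity lemma, which is not actually needed for the injectivity argument itself.
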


\begin{proof}
$\nu_{*,1^s,\ri}(k)$ is generated by  $$\left\{\left.\omega_*(k)\cdot \psi(\cup \pi_i \cup \mu_j; \pi_i; \mu_j)\right. | \ l(\pi_i)\leq1, l(\mu_j)\leq r_j\right\} $$ over $\ZZ$. 
The morphism sends $\left\{\omega_*(k)\cdot \psi(\cup \pi_i \cup \mu_j; \pi_i; \mu_j)\right\}$ to $\left\{\omega_*(k)\cdot \phi(\cup \pi_i \cup \mu_j, \pi_i, \mu_j)\right\}$ and the latter set has no nontrivial relation in $\omega_{n,1^s,\ri}(k)$ by Theorem \ref{nriZ} . 
\end{proof}
\begin{coro}The morphism $$\nu_{n,1^s,\ri}(k)\to \omega_{n,1^s,\ri}(k)$$ is an isomorphism of abelian groups and $\omega_*(k)$ modules. \end{coro}
\begin{proof}  We have proved the morphism is injective and surjective between abelian groups. 
It is easy to check the morphism is a homomorphism of $\omega_*(k)$-modules. 
\end{proof}
 
\begin{theo}
The natural map 
$$\gamma_X: \omega_*(X)\otimes_{\omega_{*}(k)} \nu_{*,1^s, \ri}(k) \to \nu_{*,1^s, \ri}(X)$$ 
defined by 
\begin{align*}&\gamma_X([Y\xrightarrow{f} X]\otimes
\psi(\cup \pi_i \cup \mu_j; \pi_i; \mu_j) \\
=& \left[Y\times \PP^{\lambda} \xrightarrow{f\circ p_Y} X, 
 p_{\PP^{\lambda}}^{-1}H_i, 
\cO^{r_j-l(\mu_j)}\oplus p_{\PP^{\lambda}}^*\left(\bigoplus_{m\in \mu_j}L_{m}\right)\right]\end{align*}
where $\lambda = \cup \pi_i \cup \mu_j$ is an isomorphism of $\omega_*(k)$-modules and $(\lambda; \pi_i; \mu_j)\in \cP_{*,s,r_1,\ldots, r_k}'$. 
Therefore the algebraic cobordism theory of bundles and divisors on varieties $\nu_{*,1^s, \ri}$ is an extension of scalars of the algebraic cobordism theory $\omega_*$.
\end{theo}
\begin{proof}
By definition the morphism $$\omega_*(X)\otimes_{\omega_{*}(k)} \nu_{*,1^s, \ri}(k) \xrightarrow{\sim} \omega_*(X)\otimes_{\omega_{*}(k)} \omega_{*,1^s, \ri}(k) \xrightarrow{\sim} \omega_{*,1^s, \ri}(X)$$ equals the composition of
$$\omega_*(X)\otimes_{\omega_{*}(k)} \nu_{*,1^s, \ri}(k)  \overset{\gamma_X}{\rightarrow}  \nu_{*,1^s, \ri}(X)  \to  \omega_{*,1^s, \ri}(X)$$
thus $\gamma_X$ is injective. Since $\gamma_X$ is canonically a homomorphism of  $w_*(k)$-modules, the only remaining part is to prove $\gamma_X$ is surjective. 

If $n=0$ then all bundles are trivial and divisors are empty, so $\nu_{0,1^s, \ri}(X) $ equals $\omega_{0, 1^s, \ri}(X)$.
If $s = 0$, there are no divisors so $\nu_{n,1^0, \ri}(X) $ equals $\omega_{n, \ri}(X)$.
In this two cases, the theorem follows from Theorem \ref{thm:scalar}. 

Now we mimic the trick in the first part of Proposition \ref{n1riZ}.
Let  $P$ be the subgroup of $\nu_{n,1^s,\ri}(X)$ generated by elements of the form $[\PP_V(\cO\oplus N)\to X, \{p^{-1}D_i\}_{i=1}^{s-1},  s, p^*{E_j} ]$ where $V$ is a smooth variety of dimension $n-1$, $\PP_V(\cO\oplus N)\to X$ is the composition of the structure map $p: \PP_V(\cO\oplus N)\to V$ and a map $f: V\to X$, $N$ is a line bundle   and $E_j $ are vector bundles of rank $r_j$ on $V$,  $D_i$ are divisors on $V$, and  $s$ is the section at infinity.

Consider an element $[f: M\to X, \{D_i\}_{i=1}^s, \{E_j\}_{j=1}^k]$ in  $\nu_{n,1^s,\ri}(k)$. 
If $D_s\neq 0$ and $D$ is an irreducible component of $D_s$, the map $f$ naturally extends to the total family  of the deformation to the normal cone  $Bl_{D\times \{\infty\}} M\times \PP^1 \to  M\times \PP^1\overset{p_M}{\rightarrow} M \overset{f}{\rightarrow} X$. 
Thus the deformation to the normal cone  yields a double point relation 
\begin{align*}&[f:M\to X, \{D_i\}_{i=1}^s, \{E_j\}_{j=1}^k]
-[\PP_{D} \to X,  \{p^*(D_i\cap D)\}_{i=1}^{s-1}, D_{\infty}, \{p^*{E_j|_{D}}\}_{j=1}^k  ]\\
-&[f:M\to X, \{D_i\}_{i=1}^{s-1}, D_s-D,  \{E_j\}_{j=1}^k] 
+[\PP_{D}\to X, \{p^*(D_i\cap D)\}_{i=1}^{s-1}, 0, \{p^*{E_j|_{D}}\}_{j=1}^k ].\end{align*}
By induction on the number of irreducible components of $D_s$,  $\nu_{n,1^s,\ri}(k)$ is generated by elements with $D_s=0$ and  $P$. 

As in proof of Proposition \ref{n1riZ}, 
$$[f:V\to X, \{D_i\}_{i=1}^{s-1}, N, E_j]\to 
[f\circ p: \PP_V(\cO\oplus N)\to X, \{p^{-1}D_i\}_{i=1}^{s-1},  s, p^*{E_j} ].$$
defines a surjective homomorphism from $\nu_{n-1, 1^{s-1},1, \ri}(X)$ to $\nu_{n, 1^s, \ri}(X)$ which is onto $P$.
By induction on $n$ and $s$, the top morphism in the following commutative diagram is an isomorphism and therefore the image of $\gamma_X$ generates $P$. 
$$\xymatrix{ 
\omega_*(X)\otimes_{\omega_{*}(k)} \nu_{*,1^{s-1},1, \ri}(k) \ar[d]\ar[r]& 
\ar[d] \nu_{*, 1^{s-1},1, \ri}(X) \ar[d]\\
\omega_*(X)\otimes_{\omega_{*}(k)} \nu_{*,1^s, \ri}(k) \ar[r]^(.6){\gamma_X}&  \nu_{*,1^s, \ri}(X) }
$$
(The diagram commutes because $\PP_{Y\times M}(\cO\oplus p_M^*N)\cong Y\times \PP_M(\cO\oplus N)$ if $N$ is  a bundle on $M$.)

Lastly, elements with $D_s=0$ are canonically isomorphic to $\nu_{n, 1^{s-1}, \ri}(X)$. By induction they are also generated by images of $\gamma_X$. 
\end{proof}

\begin{proof}[proof of Theorem \ref{isom}]
$$\xymatrix{ 
\omega_*(X)\otimes_{\omega_{*}(k)} \nu_{*,1^s, \ri}(k) \ar[d]\ar[r]& 
\ar[d] \nu_{n, 1^s, \ri}(X) \ar[d]\\
\omega_*(X)\otimes_{\omega_{*}(k)} \omega_{*,1^s, \ri}(k) \ar[r]&  \omega_{*,1^s, \ri}(X) }
$$
We have proven the top, bottom and left morphisms are isomorphisms therefore the right is also an isomorphism. 
\end{proof}
\section{Notations for singularity and tangency conditions}\label{sect:notations}
\subsection{Singularities}  First, we recall some results in singularity theory from \cite{Loo}. 
Let  $(X,x)$ be  an isolated complete intersection singularity (ICIS)  of dimension $p$. Suppose $(X,x)$ can be locally embedded in $\CC^N$ so that  $X$ is  locally defined by  common zeros of $f = (f_1, \ldots, f_{N-p}): (\CC^N,0)\to (\CC^{N-p}, 0)$.
The  miniversal deformation space of the ICIS  $(X,x)$ exists and its dimension is equal to the  Tjurina number  $$\tau(f):=\dim_{\CC}\cO_{\CC^N,0}^{N-p}/(Df\cdot \cO_{\CC^N,0}^N+\langle f_1, f_2, \ldots, f_{N-p}\rangle \cO_{\CC^N,0}^{N-p}).$$  
Two map-germs $f, g: (\CC^N,0)\to (\CC^{N-p}, 0)$ define the same ICIS if  they are $\mathscr{K}$-equivalent, i.e.  there exists a commutative diagram
\begin{displaymath}
    \xymatrix{ (\CC^N,0) \ar[r]^f \ar[d]  & (\CC^{N-p}, 0) \ar[d] \\
               (\CC^N,0) \ar[r]^g 		& (\CC^{N-p}, 0) }
\end{displaymath}
in which the vertical maps are biholomorphisms.

It is well known \cite{Wall} that every ICIS is \textit{finitely determined}, which means that if $f$ defines an ICIS then there exists a finite integer $k$ such that any map-germ  $g$ with the same $k$-jet defines the same ICIS.
If this holds then we say the  ICIS is \mbox{$k$-determined}.

\begin{defn} Let $\delta=(X,x)$ be an ICIS and $\fm_{X,x}$ be the maximal ideal in the local ring $\cO_{X,x}$. Let 
\begin{align*} k(\delta) &= \min_k \{k\,|\,\delta \text{ is } k\text{-determined}\} \text{ (the degree of determinacy)}, \\
 \xi(\delta)&=  \cO_{X,x}/\fm_{X,x}^{k(\delta)+1},
 \end{align*} 
 and $N(\delta)$ be the length of the zero-dimensional scheme $\xi(\delta)$. It is easy to see the isomorphism classes of $\xi(\delta)$, $k(\delta)$ and $N(\delta)$ can be computed from any representation of $\delta$ so they are indeed invariants of the ICIS.  
\end{defn}


Let  $\delta=(\delta_1, \delta_2,\ldots, \delta_{|\delta|})$ be  a collection of ICIS.  
A variety $X$ has singularity type $\delta$ if $X$ is singular at exactly $|\delta|$  points $\{x_1,x_2,\ldots, x_{|\delta|}\}$ and the singularity at $x_i$ is exactly $\delta_i$. 
Such variety  must contain a subscheme isomorphic to $\coprod_{i=1}^{|\delta|}\xi(\delta_i)$.  
In general ICIS includes smooth point, but  $\delta$ is assumed to have no smooth point. 

Define 
\begin{align*}
 N(\delta)=\sum_{i=1}^{|\delta|} N(\delta_i), \,\,\text{ and} \,\,
\tau(\delta)=\sum_{i=1}^{|\delta|} \tau(\delta_i). 
\end{align*}
The number $\tau(\delta)$ is the expected number of conditions for a subvariety to have singularity type $\delta$.

\subsection{Tangency conditions}

For Caporaso-Harris invariants, general nodal curves are tangent to the fixed line at smooth points of the curves. 
However, in higher dimension certain tangency conditions must occur at singular points of subvarieties. 
For example, consider an ICIS  defined by $f: (\CC^2, 0) \to (\CC^2, 0)$, $f(x,y) = (x^2, y^3)$. 
Let $C$ be a curve in $\CC^3$ defined by $g_1=g_2=0$.  
If the intersection of $C$ and the $xy$-plane has this ICIS at a point $p$, then the Jacobian matrix of $g_1$ and $g_2$ at $p$ has rank at most one.  
Therefore $C$ must be singular at $p$.  

In general, consider an ICIS defined by $r$ equations. 
If the Jacobian matrix of those $r$ equations has rank less than $r-1$ at a point, then any codimension $r$ subvariety having tangency type with a divisor of this ICIS  must be singular at the point, since the Jacobian matrix of the defining equations has rank less than $r$.  
Then this point can be counted as a tangency point or a singular point,  and there are many possible singularity types.  
One may think these singularities are ``induced'' by the tangency condition. 
The motivation of the  definition of non-induced singularity type in Section \ref{sect:non-induced} is  to avoid this ambiguity. 

Let $Y$ be a smooth variety of dimension $n$, $E$ be a vector bundle of rank $r$ and $D$ be a smooth divisor in $Y$.
If $E$ is globally generated,  the zeros of a generic section of $E$ is $(n-r)$-dimensional. 
Recall in introduction we explained why the tangency conditions are recorded by two collections of ICIS $\alpha$ (at assigned points) and $\beta$ (at unassigned points). 
Here we further assume the dimensions of all ICIS in $\alpha$ and $\beta$ are $n-r-1$ and give the formal definition of tangency conditions below.

Denote the numbers of ICIS in $\alpha$ and $\beta$ by $|\alpha|$ and $|\beta|$ and index them by $$\alpha=(\alpha_1, \alpha_2, \ldots, \alpha_{|\alpha|}), \beta=(\beta_1, \beta_2, \ldots, \beta_{|\beta|}).$$ 
 Fix an ordered list of $|\alpha|$ distinct points 
$$\Omega =\{p_i\}_{1\leq i\leq |\alpha|} \subset D.$$ 
If $s$ is a section of $E$, assume its zero locus $X$ is a  reduced subvariety of dimension $n-r$. 
We say the subvariety $X$ satisfies \textit{tangency conditions} $(\alpha, \beta)$ with $D$ or has \textit{tangency type} $(\alpha, \beta)$ with $D$ at $\Omega$ if there are $|\beta|$ distinct points $\{q_j\}_{1\leq j\leq |\beta|}$ in $D\backslash \Omega$, such that the scheme-theoretical intersection of $D$ and $X$ has singularity type $\alpha_i$ at $p_i$ and $\beta_j$ at $q_j$. 
$\alpha$ may contain smooth point as an ICIS because  this condition just means passing through a fixed point on $D$. 
But $\beta$ will not contain smooth points. 

The scheme-theoretic intersection $D\cap X$ has a given ICIS at some point $p\in D$ if and only if the restriction of $s$  on $D$  defines the ICIS at $p$.
Since $D$ is smooth everywhere, the expected codimension of this condition is the Tjurina number $\tau$ of this ICIS.
If the point $p$ is given, then the expected codimension becomes $\tau+n-1$.
As a result, the expected codimension of tangency conditions $(\alpha, \beta)$ at given $\Omega$ is
$$\codim(\alpha, \beta):= (n-1)|\alpha|+\tau(\alpha)+\tau(\beta).$$

\begin{exam}\label{multiplicity} If $X$ is a curve, $X$ is tangent to $D$ of multiplicity $k$ at a smooth point if and only if $X\cap D$ has the ICIS $(\Spec \CC[t]/t^k, 0)$.  
The ICIS $(\Spec \CC[t]/t^k, 0)$ is $(k-1)$-determined and  
$$\tau(\Spec \CC[t]/t^k, 0) = k-1, \ \  \xi(\Spec \CC[t]/t^k, 0) = \Spec \CC[t]/t^k, \ \ N(\Spec \CC[t]/t^k, 0) =k.$$ 
The expected codimension for $X$ to be tangent to  $D$ of  multiplicity $k$ at unassigned point is $k-1$. 
The expected codimension for $X$ to be tangent to  $D$ of  multiplicity $k$ at an assigned point is  $n+k-2$. 

\end{exam}

\section{Degeneration formula} \label{sect:deg}
In this section we express the numbers of singular subvarieties  satisfying given tangency conditions as intersection numbers on Hilbert schemes of points  under ampleness assumptions (Proposition \ref{prop:daL}). 
Then two degeneration formulae  (Corollary \ref{goalpha}, \ref{d=dd2}) show the intersection numbers behave nicely under double point relations. 
The degeneration formulae combined with the basis of the algebraic cobordism groups discussed in Section \ref{sect:algcobordism} will lead to the existence of universal polynomials in Section \ref{sec:univ}. 

\subsection{Hilbert schemes}

Recall $Y$ is a smooth variety of dimension $n$, $D$ is a smooth divisor and $E$ is a vector bundle of rank $r$ on $Y$. Let $\alpha$, $\beta$ be  collections of ICIS of dimension $n-r-1$ and  $\delta$ is a collection of ICIS of dimension $n-r$. 

Fix an ordered list of $|\alpha|$ distinct points $\Omega= \{p_i\}_{1\leq i\leq |\alpha|}$ in $D$ and 
let $Y^{[m]}$ be the Hilbert scheme of $m$ points on $Y$. 
Let $$N(\alpha, \beta, \delta) = N(\alpha)+N(\beta)+N(\delta).$$
Define $Y^0_{\Omega}(\alpha,\beta,\delta)$ to be the subset of $Y^{[N(\alpha, \beta, \delta)]}$ consisting of subschemes $\coprod_{k=1}^{|\alpha|+|\beta|+|\delta|} \eta_k$ satisfying the following conditions: 
\begin{enumerate}
	\item If $D$ is the zero divisor, then $Y^0_{\Omega}(\alpha,\beta,\delta)$ is the empty set except when $\alpha$, $\beta$ are both the empty set $\varnothing$. 
	\item If $\alpha$, $\beta$ and  $\delta$ are all empty sets, let $Y^0_{\Omega}(\alpha,\beta,\delta)$  be $\Spec \CC$. 
	\item $\eta_k$ are supported on distinct points of $Y$. 
	\item For $1\leq i\leq |\alpha|$, $\eta_i$ is a subscheme of $D$   support at  $p_{i}$ and   $\eta_i\cong \xi(\alpha_i)$ as schemes.
	\item For  $1\leq i\leq |\beta|$,  $\eta_{|\alpha|+i}$ is a subscheme of $D$ and   $\eta_{|\alpha|+i}\cong \xi(\beta_i)$ as schemes.
	\item For  $1\leq i\leq |\delta|$,  $\eta_{|\alpha|+|\beta|+i}$ is isomorphic to $\xi(\delta_i)$. 

\end{enumerate}

 
For every $k\in \NN$, let 
$Z_k\subset Y\times Y^{[k]}$ be the universal closed subscheme with projections 
$p : Z_k \to Y$, $q: Z_k \to Y^{[k]}$.

\begin{defn}\label{defn:tautological} If $E$ is a vector bundle of rank $r$ on $Y$, define $E^{[k]} = q_* p^*E$. Because $q$ is finite and flat, $E^{[k]}$ is a vector bundle of rank $kr$ on $Y^{[k]}$ and it is called the \textit{tautological bundle} of $E$. 
\end{defn}

Recall $E$ is \textit{$k$-very ample} if for every zero-dimensional subscheme $Z$ of length $k+1$ in $Y$, the natural restriction map $H^0(E)\to H^0(E\otimes \cO_Z)$ is surjective.

For simplicity, let $N=N(\alpha, \beta, \delta)$, $c=\text{codim}(\alpha,\beta)+\tau(\delta)$ be the expected codimension for having non-induced singularity $\delta$ and tangency type $(\alpha,\beta)$ with $D$ at $\Omega$. Let 
$$\Lambda^{\Omega}_{\alpha,\beta, \delta}(Y,D,E)
:=c_{rN-c}(E^{[N]})\cap [Y_{\Omega}(\alpha,\beta,\delta)]$$ 
in the Chow group of $\Hilb{Y}{N}$. 

\begin{lemm} $\Lambda^{\Omega}_{\alpha,\beta, \delta}(Y,D,E)$ is a zero cycle.
\end{lemm}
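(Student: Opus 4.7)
The class $\Lambda$ is a zero-cycle precisely when $\dim Y_{\Omega}(\alpha,\beta,\delta) \leq rN - c$, since $c_{rN-c}(E^{[N]})$ acts as a codimension-$(rN-c)$ operator on Chow cycles. The two degenerate cases in the definition (where $Y^0_{\Omega}$ is empty or equal to $\Spec \CC$) are immediate; in the generic situation it suffices, by density of $Y^0_{\Omega}$ in its closure $Y_{\Omega}$, to show $\dim Y^0_{\Omega}(\alpha,\beta,\delta) = rN - c$.

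My plan is to fiber $Y^0_{\Omega}$ over the configuration of supports. Since the $\eta_k$'s are required to be supported at pairwise distinct points, the assignment $\coprod \eta_k \mapsto (\text{tuple of supports})$ is a morphism from $Y^0_{\Omega}$ to an open subset of $D^{|\beta|}\times Y^{|\delta|}$ (with mutual distinctness and disjointness from $\Omega$), of dimension $(n-1)|\beta| + n|\delta|$; the $\alpha$-supports are pinned at $\Omega$. Because a subscheme with disjoint supports decomposes as a disjoint union, the fiber over a configuration splits as a product of punctual equisingular loci: for each $\alpha_i$, the locus in the punctual Hilbert scheme of $D$ at $p_i$ of subschemes isomorphic to $\xi(\alpha_i)$; for each $\beta_i$ (resp.\ $\delta_i$), the analogous punctual locus at the chosen free point of $D$ (resp.\ $Y$).

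The central local input is the following dimension formula: for a codimension-$r$ ICIS $\gamma$ inside a smooth ambient space of dimension $d$, the locus in the punctual Hilbert scheme of $\CC^d$ at the origin consisting of subschemes isomorphic to $\xi(\gamma)$ has dimension $rN(\gamma) - \tau(\gamma) - d$. This is extracted from the miniversal deformation theory of ICIS recalled in Section \ref{sect:notations} (cf.\ \cite{Loo, Wall}): each such subscheme corresponds to a $\mathcal{K}$-equivalence class of map-germs $(\CC^d,0)\to(\CC^r,0)$ cutting out $\xi(\gamma)$, and $\tau(\gamma)$ records the codimension of the tangent space to the $\mathcal{K}$-orbit inside the $k(\gamma)$-jet space. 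As a sanity check, for a node on a surface ($r=1$, $d=2$, $N=5$, $\tau=1$) the locus is an open subset of $\PP(\fm^2/\fm^3)$, of dimension $2 = rN-\tau-d$.

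Summing the fiber and base contributions, the $(n-1)|\beta|$ and $n|\delta|$ dimensions from the base cancel the corresponding subtractions appearing in the dimensions of the free $\beta_i$ and $\delta_i$ fibers, leaving
\begin{align*}
\dim Y^0_{\Omega} &= \sum_{i}\bigl(rN(\alpha_i) - \tau(\alpha_i) - (n-1)\bigr) + \sum_{i}\bigl(rN(\beta_i) - \tau(\beta_i)\bigr) + \sum_{i}\bigl(rN(\delta_i) - \tau(\delta_i)\bigr)\\
&= rN - (n-1)|\alpha| - \tau(\alpha) - \tau(\beta) - \tau(\delta) = rN - c.
\end{align*}
The main obstacle is the local equisingular-locus dimension formula: although $\tau(\gamma)$ naturally measures codimensions inside jet spaces, the transfer to the punctual Hilbert scheme requires care about the subgroup of contact transformations that preserve the ideal (as opposed to the map-germ) and an explicit identification of $\mathcal{K}$-equivalence classes with points of the Hilbert scheme.
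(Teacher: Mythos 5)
Your overall strategy coincides with the paper's: reduce the lemma to showing $\dim Y_{\Omega}(\alpha,\beta,\delta)=rN-c$, split the count point by point, and use determinacy plus the Tjurina number; your bookkeeping (each free $\delta_i$ contributes $rN(\delta_i)-\tau(\delta_i)$, each free $\beta_i$ contributes $rN(\beta_i)-\tau(\beta_i)$ with ambient dimension $n-1$, each assigned point loses a further $n-1$) agrees exactly with the paper's. The gap is that the punctual dimension formula you isolate --- that the subschemes of $(\CC^d,0)$ isomorphic to $\xi(\gamma)$ form a locus of dimension $rN(\gamma)-\tau(\gamma)-d$ --- is the entire content of the proof, and you assert it rather than prove it (you flag it yourself as ``the main obstacle''). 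Moreover, the one-line justification you sketch is not correct as stated: for map-germs $(\CC^d,0)\to(\CC^r,0)$, $\tau(\gamma)$ is the \emph{extended} ($\mathscr{K}_e$) codimension, i.e.\ the codimension, inside the full space of $k(\gamma)$-jets (constant terms included), of the locus of jets acquiring the singularity $\gamma$ at some nearby point; it is \emph{not} the codimension of the $\mathscr{K}$-orbit inside the space of jets vanishing at the origin. For a node ($r=1$, $d=2$) the latter codimension is $2$ while $\tau=1$, and carrying out your count inside the jets vanishing at the origin with $\tau$ in place of the true orbit codimension would yield $r\bigl(N(\gamma)-1\bigr)-\tau(\gamma)$, which equals your $rN(\gamma)-\tau(\gamma)-d$ only when $r=d$. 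Your formula is right, but not for the reason given, so the deferred step genuinely needs the argument you postponed.

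The paper closes exactly this gap with a short versality argument that you could adopt: let $H$ be the space of $r$-tuples of polynomials of degree at most $k(\gamma)$ (constants included) and consider the deformation $F(x,h)=f(x)+h(x)$ of a representative $f$ of $\gamma$. By $k(\gamma)$-determinacy this deformation is versal, so the locus of $h\in H$ for which $f+h$ has the ICIS $\gamma$ at a nearby (unconstrained) point has dimension $\dim H-\tau(\gamma)$; two tuples cut out the same subscheme $\langle g_1,\ldots,g_r\rangle+\fm^{k(\gamma)+1}$ precisely when each $g_i$ is a general element of the fixed ideal modulo $\fm^{k(\gamma)+1}$, so the fibers of the map to the Hilbert scheme have dimension $r\cdot\dim\bigl((\langle f_1,\ldots,f_r\rangle+\fm^{k(\gamma)+1})/\fm^{k(\gamma)+1}\bigr)$. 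Subtracting gives $rN(\gamma)-\tau(\gamma)$ for the locus with moving support, i.e.\ your punctual $rN(\gamma)-\tau(\gamma)-d$, and with this input your fibering over the configuration of supports completes the proof along the same lines as the paper (which folds the support point into the count rather than fibering it off, a purely cosmetic difference).
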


\begin{proof} Since $\text{codim}(\alpha,\beta)=(n-1)|\alpha|+\tau(\alpha)+\tau(\beta)$,
\begin{align*} & rN-c=r(N(\alpha)+N(\beta)+N(\delta))-(n-1)|\alpha|-\tau(\alpha)-\tau(\beta)-\tau(\delta)\\
=& (rN(\alpha) -(n-1)|\alpha|-\tau(\alpha))
+(rN(\beta)-\tau(\beta))+(rN(\delta)-\tau(\delta))\end{align*}

Next we compute the dimension of $Y_{\Omega}(\alpha,\beta,\delta)$. 
If $f$ is a representative of a single ICIS $\delta$, since $\delta$ is $k(\delta)$-determined, $$\cO_{\CC^n,0}/(\langle f_1, f_2,\ldots, f_r\rangle +\fm_{\CC^n,0}^{k(\delta)+1}) \cong \cO_{\CC^n,0}/(\langle g_1, g_2,\ldots, g_r\rangle +\fm_{\CC^n,0}^{k(\delta)+1})$$  if and only if $g=(g_1,\ldots, g_r)$ also defines $\delta$. 
These two subschemes are equal for general choices of $g_i$ in $\langle f_1, f_2,\ldots, f_r\rangle +\fm_{\CC^n,0}^{k(\delta)+1}/\fm_{\CC^n,0}^{k(\delta)+1}$ and the dimension for all choices of $g_1, \ldots ,g_r$ is 
$$r \cdot \left( \dim_{\CC} (\langle f_1, f_2,\ldots, f_r\rangle +\fm_{\CC^n,0}^{k(\delta)+1})/\fm_{\CC^n,0}^{k(\delta)+1}\right).$$

Let $H$ be the direct sum of $r$ copies of the vector space of degree  $k(\delta)$ polynomials in $x_1, \ldots, x_n$. A deformation of $f$ over $H$ is given by $F(x,h)=f(x)+h(x)$. 
Since $\delta$ is $k(\delta)$-determined, this deformation of $f$ is versal  and the dimension of $H$ is the sum of  $\tau(\delta)$ and the dimension of $k(\delta)$-jets $g$ which also define $\delta$. 
As a result, the dimension of choosing an subscheme $\eta \cong \xi(\delta)$  is 
\begin{align*}&\dim H-\tau(\delta)-r \cdot \left( \dim (\langle f_1, f_2,\ldots, f_r\rangle +\fm_{\CC^n,0}^{k(\delta)+1})/\fm_{\CC^n,0}^{k(\delta)+1}\right) \\
=&r \dim \cO_{\CC^n,0}/(\langle f_1, f_2,\ldots, f_r\rangle +\fm_{\CC^n,0}^{k(\delta)+1})  -\tau(\delta) =rN(\delta)-\tau(\delta).\end{align*} 
Therefore the dimension of choosing $  \coprod_{k=1}^{|\delta|} \eta_{|\alpha|+|\beta|+k}$
is $rN(\delta)-\tau(\delta)$. 

For $\beta$, the only difference is that the subschemes are supported on $D$.
If we change $n$ to $n-1$ above, the same argument holds and the dimension of choosing $  \coprod_{k=1}^{|\beta|} \eta_{|\alpha|+k}$
is $rN(\beta)-\tau(\beta)$. 
For $\alpha$,  the dimension of the subscheme $\coprod_{k=1}^{|\alpha|} \eta_{k}$
is $rN(\alpha)-\tau(\alpha) - (n-1)|\alpha|$ because $\eta_1, \ldots, \eta_{|\alpha|}$ are supported at assigned points on $D$.
So we can conclude the dimension of $Y_{\Omega}(\alpha,\beta,\delta)$ is 
$$ (rN(\alpha) -(n-1)|\alpha|-\tau(\alpha)) + (rN(\beta)-\tau(\beta))+(rN(\delta)-\tau(\delta))=rN-c.$$
Since the computation is local, the dimension is the same everywhere on $Y^0_{\Omega}(\alpha,\beta,\delta)$ and thus on $Y_{\Omega}(\alpha,\beta,\delta)$. 
\end{proof}



Define
$d_{\alpha,\beta, \delta}^{\Omega}(Y, D,E)$ to be the degree of the zero cycle $\Lambda^{\Omega}_{\alpha,\beta, \delta}(Y,D,E)$. 
If $\alpha$ is empty, $\Omega$ is also empty so there is no ambiguity to write $d_{\varnothing,\beta,\delta}(Y,D,E)$.  



\begin{prop}\label{prop:daL} If  $\alpha$,  $\beta$ are two collections of ICIS of dimension $n-r-1$,  and  $\delta$ is a collection of ICIS of dimension $n-r$.
For all  $(N(\alpha)+N(\beta)+N(\delta)+n-r+1)$-very ample vector bundle $E$ of rank $r$  on a smooth variety $Y$ of dimension $n$, if  $\PP(V)\subset \PP(H^0(E))$ is  a general linear subspace of dimension $\text{codim}(\alpha,\beta)+\tau(\delta)$, then there are precisely $d_{\alpha,\beta, \delta}^{\Omega}(Y, D,E)$ subvarieties defined by zero sets of sections in $V$ which have non-induced singularity type $\delta$ and satisfy tangency conditions $(\alpha,\beta)$ with $D$ at $\Omega$. 
\end{prop}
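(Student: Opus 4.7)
The plan is to realize $d_{\alpha,\beta,\delta}(Y,D,E)$ as the degree of a $0$-cycle on an incidence correspondence and then match this to the geometric count via a Bertini-type genericity argument. Write $N = N(\alpha,\beta,\delta)$ and $c = \codim(\alpha,\beta) + \tau(\delta)$, fix a collection $\Omega \subset D$ of $|\alpha|$ distinct points, and form
\[
I := \{(\xi,[s]) \in Y_\Omega(\alpha,\beta,\delta)\times \PP(H^0(E)) : s|_\xi = 0\}.
\]
The $(N+n-r+1)$-very ample hypothesis implies in particular the surjectivity of the evaluation map $\mathrm{ev}_\xi : H^0(E) \to H^0(E\otimes \cO_\xi) = E^{[N]}_\xi$ for every $\xi \in Y^{[N]}$, so the first projection exhibits $I$ as a projective bundle over $Y_\Omega(\alpha,\beta,\delta)$ of relative dimension $h^0(E) - rN - 1$, with $\dim I = h^0(E) - c - 1$. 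Equivalently, $I$ is the zero scheme of the tautological section of $\pi_1^* E^{[N]} \otimes \pi_2^*\cO_{\PP(H^0(E))}(1)$ on $Y_\Omega(\alpha,\beta,\delta) \times \PP(H^0(E))$.

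A general linear subspace $\PP(V) \subset \PP(H^0(E))$ of dimension $c$ has codimension $h^0(E)-c-1$, so a dimension count gives $\dim\bigl(I \cap (Y_\Omega \times \PP(V))\bigr) = 0$ for generic $V$. Intersecting the class $c_{rN}(\pi_1^* E^{[N]} \otimes \pi_2^*\cO(1))$ of $[I]$ with the hyperplane-power class $\pi_2^* H^{h^0(E)-1-c}$ of $Y_\Omega \times \PP(V)$ and pushing forward to $Y_\Omega(\alpha,\beta,\delta)$ isolates the term with $c_{rN-c}(E^{[N]})$; hence
\[
\#\bigl(I \cap (Y_\Omega \times \PP(V))\bigr) = \deg\bigl(c_{rN-c}(E^{[N]}) \cap [Y_\Omega(\alpha,\beta,\delta)]\bigr) = d_{\alpha,\beta,\delta}(Y,D,E).
\]

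The last step, which is also the main obstacle, is to identify this set-theoretic count with the geometric count of subvarieties $Z(s)$ of non-induced singularity type $\delta$ and tangency type $(\alpha,\beta)$ with $D$. One direction uses finite-determinacy: each local ICIS $\delta_i$, $\alpha_i$, $\beta_i$ appearing in $Z(s)$ is recorded intrinsically by a unique jet of length $N(\delta_i)$, $N(\alpha_i)$, $N(\beta_i)$ cut out by $s$, so each such $Z(s)$ yields a unique $\xi \in Y_\Omega(\alpha,\beta,\delta)$ with $\xi \subset Z(s)$. The harder direction is to show, for generic $V$, that every $(\xi, [s]) \in I \cap (Y_\Omega \times \PP(V))$ produces a $Z(s)$ whose local type at each component of the support of $\xi$ is exactly the prescribed ICIS---rather than a deeper degeneration---and that no extra singular or tangent points appear off the support of $\xi$. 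This will require a Bertini-type argument built on the miniversal deformation spaces of each ICIS. The $(N+n-r+1)$-very ample hypothesis, stronger than the bare $(N+1)$-very ampleness used to make $I$ a projective bundle, provides $n-r+1$ extra independent jets---precisely enough to guarantee, first, that the natural map from sections in $V$ to the product of miniversal deformation spaces meets the prescribed strata transversely (so the local types are exactly $\delta_i$, $\alpha_i$, $\beta_i$), and second, that the loci of potential extra singularities or tangencies on $Y$ and $D$ have codimension strictly greater than $\dim \PP(V)$ and hence are avoided by generic $V$. Combining this transversality with the intersection count above completes the proof.
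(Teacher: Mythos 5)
The intersection-theoretic half of your argument is fine and essentially the same as the paper's: your incidence correspondence $I\cap\bigl(Y_\Omega\times\PP(V)\bigr)$, after expanding $c_{rN}(\pi_1^*E^{[N]}\otimes\pi_2^*\cO(1))$ and pushing forward, is exactly the paper's degeneracy-locus computation (via \cite[Example~14.3.2]{Fu}) identifying the count of pairs $(\xi,[s])$ with $\deg\bigl(c_{rN-c}(E^{[N]})\cap[Y_\Omega(\alpha,\beta,\delta)]\bigr)$. But the final paragraph is where the actual content of the proposition lies, and there you only announce what must be shown: that for generic $V$ every such $s$ has zero locus of dimension $n-r$, with \emph{exactly} the ICIS $\delta_i$, $\beta_j$, $\alpha_i$ at the support of $\xi$ (not a deeper degeneration), and with no singular or tangent points off $\mathrm{supp}\,\xi$. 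Deferring this to ``a Bertini-type argument built on the miniversal deformation spaces'' is a genuine gap: there is no globally defined map from $V$ to a product of miniversal deformation spaces to which Kleiman transversality could be applied, and the ``not a deeper singularity'' statement is not a transversality statement about strata at all. The paper handles it by three explicit dimension counts in auxiliary Hilbert schemes: (1) an unexpected singular point or a zero locus of excess dimension forces $Z(s)$ to contain $\xi$ together with an extra fat point $\Spec\CC[x_1,\dots,x_{n-r+1}]/(x_1,\dots,x_{n-r+1})^2$, and the corresponding cycle in $Y^{[N+n-r+2]}$ has negative expected dimension; (2) a deeper ICIS at a point of $\mathrm{supp}\,\xi$ forces the existence of an intermediate ideal $J$ with colength $\mathrm{length}(\eta_i)+1$ squeezed between the ideal of $X$ and that of $\eta_i$, and the family of such $J$ has dimension at most $r-1$ plus that of the $\eta_i$'s, again killing the cycle in $Y^{[N+1]}$; (3) extra or wrong tangency with $D$ is excluded the same way using subschemes of $D$ and $Y^{[N+n-r+1]}$. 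Your description of the role of the hypothesis is also off: the budget $N+n-r+1$ is consumed precisely by the length-$(n-r+2)$ and length-$(n-r+1)$ auxiliary subschemes in steps (1) and (3), not by ``$n-r+1$ extra jets'' ensuring transversality to miniversal strata; the ``exact type, not deeper'' steps need only $(N+1)$-very ampleness.

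A secondary omission: to conclude there are \emph{precisely} $d_{\alpha,\beta,\delta}(Y,D,E)$ subvarieties you must also know the zero-dimensional intersection is reduced (each point counts with multiplicity one); the paper gets this from Kleiman's transversality theorem applied to the preimage under $\phi_N:Y^{[N]}\to \mathrm{Grass}(h^0(E)-rN,H^0(E))$ of the Schubert cycle of subspaces meeting $V$. Your proposal never addresses multiplicities.
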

\begin{proof}

Recall  $N=N(\alpha)+N(\beta)+N(\delta)$, $c=\text{codim}(\alpha,\beta)+\tau(\delta)$. 

Let $\{s_0,s_1,\ldots, s_c\}\subset H^0(E)$ be a coordinate system of $V$,  then 
$\{q_*p^* s_i\}_{i=0}^c$
are global sections of $E^{[N]}$ via the evaluation map $H^0(Y,E)\otimes \cO_{Y^{[N]}} \to E^{[N]}$.  
By the ampleness assumption, $H^0(E)\to H^0(E\otimes \cO_Z)$ is surjective for every closed subscheme $Z$ in $\Hilb{Y}{N}$, 
therefore $E^{[N]}$ are generated by sections from $H^0(E)$. 
By applying \cite[Example~14.3.2]{Fu} and a dimensional count,  the locus $W$  where $\{q_*p^* s_i\}_{i=0}^c$ are linearly dependent  is Poincar\'e dual to 
 $c_{rN-c}(E^{[N]})$ for general $V$
and  $$[W\cap Y_{\Omega}(\alpha,\beta,\delta)]=c_{rN-c}(E^{[N]})\cap [Y_{\Omega}(\alpha,\beta,\delta)]=\Lambda^{\Omega}_{\alpha,\beta, \delta}(Y,D,E).$$

Applying \cite[Example~14.3.2]{Fu} again to $Y_{\Omega}(\alpha,\beta,\delta)\backslash Y_{\Omega}^0(\alpha,\beta,\delta)$ with a dimensional count implies that $W\cap Y_{\Omega}(\alpha,\beta,\delta)$   is only supported on $Y^0_{\Omega} (\alpha,\beta,\delta)$ for general $V$. 
If $Z\in Y^{[N]}$, the kernel of $H^0(E)\to H^0(E|_{Z})$ is a $rN$-codimensional subspace of $H^0(E)$ so it defines a morphism $\phi_N: Y^{[N]}\to $ Grass$(h^0(E)-rN, H^0(E))$.    
$W\cap Y_{\Omega}(\alpha,\beta,\delta)$ is the preimage of the Schubert cycle 
$\{U \in \text{ Grass}(h^0(E)-rN, H^0(E))\,|\, U\cap V\neq 0\}$
 under $\phi_N$. 
By \cite{Kltran},  $W\cap Y_{\Omega}(\alpha,\beta,\delta)$  is smooth for general $V$.

For general $V$, every point in $W\cap Y_{\Omega}(\alpha,\beta,\delta)$ corresponds to a section $s\in V$ which  vanishes at a point in $Y^0_{\Omega}(\alpha,\beta,\delta)$.  
We need to  show the vanishing locus of such $s$ must be a subvariety  with non-induced singularity types $\delta$ satisfying tangent conditions  $(\alpha,\beta)$ with $D$ at $\Omega$. 
Denote the zero locus of $s$ by $X$ and let $X$ contain the zero-dimensional subscheme $\eta=\coprod_{i=1}^{|\alpha|+|\beta|+|\delta|} \eta_i$ in   $Y^0_{\Omega}(\alpha,\beta, \delta)$.
The argument is divided into three  steps.

\textbf{Step 1}. For general $V$, we show all such subvarieties $X\in \Lambda^{\Omega}_{\alpha,\beta, \delta}(Y,D,E)$ must have dimension $n-r$ and the  non-induced singular points of  $X$ must lie in  the support of $\coprod_{i=1}^{|\delta|} \eta_{|\alpha|+|\beta|+i}$. In particular it implies that $X$ is reduced. 
We will prove it by contradiction. 

Suppose $X$ has a  singular point disjoint from the support of $\eta$ or the dimension of $X$ is greater than $n-r$. 
then $X$ must contain a zero-dimensional subscheme in $Y'(\alpha,\beta, \delta):= \{ \eta' \cup Z \}$ for all
$$
\eta' \in Y^0_{\Omega}(\alpha,\beta, \delta) \text{ and } Z\cong
\Spec \CC[x_1, \ldots, x_{n-r+1}]/(x_1, \ldots, x_{n-r+1})^2. $$ $Y'(\alpha,\beta, \delta)$ is a subset of $\Hilb{Y}{N+n-r+2}$.
Since $V$ is general and $E$ is $(N+n-r+1)$-very ample,  $$c_{r(N+n-r+2)-c}(E^{[N+n-r+2]})\cap [\overline{Y'(\alpha,\beta, \delta)}]\geq 1$$
because the existence of $X$.  
But the cycle vanishes because 
\begin{align*}&\text{dim }\overline{Y'(\alpha,\beta, \delta)}=\text{dim } Y(\alpha,\beta, \delta)+n+(r-1)(n-r+1) \\= &rN-c+ n+(r-1)(n-r+1)<r(N+n-r+2)-c.\end{align*}
Therefore such $X$ does not exist.

\textbf{Step 2}. We prove that subvarieties in $\Lambda_{\alpha,\beta, \delta}^{\Omega}(Y,D,E)$ must have non-induced singularity type  $\delta$. 
Let $X$, $s$ and $\eta$ be same as above. By Step 1 we know the non-induced  singular points of $X$ lie in   the support of $\coprod_{i=1}^{|\delta|} \eta_{|\alpha|+|\beta|+i}$. 
Because $X$ contains $\coprod_{i=1}^{|\delta|} \eta_{|\alpha|+|\beta|+i}$, the non-induced singularity type of $X$ at supp $\eta_{|\alpha|+|\beta|+i}$ is either $\delta_i$ or ``worse than'' $\delta_i$. 
If the non-induced singularity type of $X$ is worse than $\delta_i$, without loss of generality we can assume the singularity type at the point $y_1$ (which is defined to be the support of $\eta_{|\alpha|+|\beta|+1}$)  is not $\delta_1$.  

Fix an isomorphism between $(Y,y_1)$ with $(\CC^n,0)$. Let $g_1=\cdots=g_{r}=0$ be the local equations of $X\subset Y$ at  $y_1$   and $\eta_{|\alpha|+|\beta|+1}$  be $\Spec\cO_{\CC^n,0} / \langle f_1,\ldots, f_{r}, \fm_{\CC^n,0}^{k(\delta_1)+1}\rangle$. 
Because  $\eta_{|\alpha|+|\beta|+1}$ is a subscheme of $X$, the ideal $\langle f_1,\ldots, f_r, \fm_{\CC^n,0}^{k(\delta_1)+1}\rangle \subset \cO_{\CC^n,0}$ contains but can not equal $\langle g_1,\ldots, g_{r}, \fm_{\CC^n,0}^{k(\delta_1)+1}\rangle$  otherwise by the finite determinacy theorem the singularity type of $X$ at $y_1$ is $\delta_1$. 
Then there must be an ideal $J$ satisfying 
$$\langle g_1,\ldots, g_{r}, \fm_{\CC^n,0}^{k(\delta_1)+1}\rangle\subset J\subset \langle f_1,\ldots, f_{r}, \fm_{\CC^n,0}^{k(\delta_1)+1}\rangle $$
and $\text{dim}_{\CC}(\cO_{\CC^n,0}/J)=\text{length}(\eta_{|\alpha|+|\beta|+1})+1$. 
If we fix $\eta_{|\alpha|+|\beta|+1}$, every ideal $J$ satisfying 
$\fm_{\CC^n,0}^{k(\delta_1)+1}\subset J\subset \langle f_1,\ldots, f_{r},  \fm_{\CC^n,0}^{k(\delta_1)+1}\rangle $ and  
$\text{length}(\cO_{\CC^n,0}/J)=\text{length}(\eta_{|\alpha|+|\beta|+1})+1$
corresponds to  a codimension $1$ subspace in the vector space spanned by the images of $ f_1,\ldots, f_{r}$ in $\cO_{\CC^n,0}/\fm_{\CC^n,0}^{k(\delta_1)+1} $. Therefore the dimension of all choices of $J$ is no greater than $r-1$.

If we allow $\eta_{|\alpha|+|\beta|+1}$ to vary, the dimension of such $J$ is no greater than $r-1$ plus the dimension of all $ \eta_{|\alpha|+|\beta|+1}$.   
Since $X$ contains $\Spec \cO_{Y,y_1}/J$ and $\eta\backslash \eta_{|\alpha|+|\beta|+1}  $,  a similar argument to step 1 implies such $X$ must contribute to intersection of 
$c_{r(N+1)-c}(E^{[N+1]})$ and the closure of all possible union of $\Spec \cO_{Y,y_1}/J$ and $ \coprod_{i\in \{1,2,\ldots,|\alpha|+|\beta|+|\delta|\}\backslash\{|\alpha|+|\beta|+1\}}  \eta_i  $ in $Y^{[N+1]}$. For dimension reason this cycle is empty, 
therefore for general $V$, every $X$ in $\Lambda_{\alpha,\beta, \delta}^{\Omega}(Y,D,E)$ must have  singularity type $\delta$ at the support of $\coprod_{i=1}^{|\delta|}  \eta_{|\alpha|+|\beta|+i}$.

All $\eta$  with  $ \eta_{|\alpha|+|\beta|+1}$ supported on $D$ form a closed subvariety of $Y_{\Omega}(\alpha, \beta, \delta)$. For dimension reason the intersection of this subvariety and $c_{rN-c}(E^{[N]})$ is empty. 
Therefore by symmetry for general $V$, $X$ has non-induced singularity type precisely $\delta$. 

\textbf{Step 3}. We prove for general $V$, subvarieties  in $\Lambda^{\Omega}_{\alpha,\beta, \delta}(Y,D,E)$  must have tangent type $(\alpha,\beta)$ with $D$ at $\Omega$. 

Since $X$ contains  $\coprod_{i=1}^{|\alpha|+|\beta|} \eta_k \subset D$, the definition of  $\eta_k$ ensures that $X$ has tangency type ``at least'' $(\alpha,\beta)$ with $D$ at the support of $\coprod_{i=1}^{|\alpha|+|\beta|} \eta_i$. 
If $X$ is tangent to $D$ at another point, then $D\cap X$ is not  smooth there and the tangent space at this point is at least $(n-r)$-dimensional. Therefore, $X\cap D$ contains a subscheme $Z\cong  \Spec \CC[x_1,\ldots, x_{n-r}]/(x_1,\ldots, x_{n-r})^2$ which is disjoint from $\coprod_{k=1}^{|\alpha|+|\beta|} \eta_k$.  
All such $Z$ form a subset of $D^{[n-r+1]}$ of dimension $n-1+(r-1)(n-r)$ where $n-1$ comes from the choice of point on $D$ and   $(r-1)(n-r)$  from the choice of $x_1,\ldots, x_{n-r}$ in the local coordinate of the point in $D$. 
Using a similar argument in Step 2 and the fact that $E$ is $(N+n-r)$-very ample, those $X$ containing $Z \cup \eta$ in $V$ contribute to intersection of  $c_{r(N+n-r+1)-c}(E^{[N+n-r+1]})$ and  a  $(rN-c+n-1+(r-1)(n-r))$-dimensional subvariety in $Y^{[N+n-r+1]}$. The dimension of this cycle is negative so such $X$ does not exist. 
Therefore we can conclude $X$ is tangent to $D$ at exactly $|\alpha|+|\beta|$ points. 

If the tangency type of $X$ and $D$ is still not $(\alpha, \beta)$, it must due to a different ICIS at one of the tangency points on $D\cap X$. 
Suppose this happens at an unassigned point, which can be assumed to be $q = \text{supp } \eta_{|\alpha|+1}$ without loss of generality.
The argument in Step 2 can be applied here with $Y$ replaced by $D$. 
It shows $X$ must contain the union of  $\Spec \cO_{D,q}/J$ with $ \coprod_{i\in \{1,2,\ldots,|\alpha|+|\beta|+|\delta|\}\backslash\{|\alpha|+1\}}  \eta_i$ for some $J$ such that   the length of $\Spec \cO_{D,q}/J$ is $N(\beta_1)+1$. 
Since $E$ is $N$-very ample,  such $X$ must contribute to intersection of 
$c_{r(N+1)-c}(E^{[N+1]})$ and the closure of all possible  $\Spec \cO_{D,q}/J$ union $ \coprod_{i\in \{1,2,\ldots,|\alpha|+|\beta|+|\delta|\}\backslash\{|\alpha|+1\}}  \eta_i  $ in $Y^{[N+1]}$. 
All possible $\Spec \cO_{D,q}/J$ form a subset of dimension no greater than $r-1$ plus the dimension of all $\eta_{|\alpha|+1}$. 
Thus for dimension reason this cycle is empty and tangency type of  $X$ with $D$ at unassigned points must be $\beta$. 

If the tangency type of $D$ and $X$  is not $\alpha_k$ at an assigned point $p_k$,  we can modify the argument above by considering  $q=p_k$. 
Then all possible $\Spec \cO_{D,p_k}/J$ still form a subset of dimension no greater than $r-1$ plus the dimension of all $\eta_{k}$. 
Thus for dimension reason it can not happen and the tangency type of $X$ with $D$ must be $(\alpha, \beta)$ at $\Omega$.
\end{proof}


\subsection{Relative Hilbert schemes}\label{rel} To set up, we recall some useful facts about the relative Hilbert schemes constructed by Li and Wu \cite{LW}. 
Let $U$ be a smooth irreducible curve and $\infty \in U$ be a specialized point. Consider a flat projective family of schemes $\pi: \cY \to U$  which satisfies
\begin{enumerate}
	\item $\cY$ is smooth and $\pi$ is smooth away from the fiber $\pi^{-1}(\infty)$; 
	\item $\pi^{-1}(\infty)=: Y_1\cup Y_2$  is a union of two irreducible smooth components $Y_1 $ and $Y_2$ which intersect transversally along a smooth divisor $B$.
\end{enumerate}
In \cite{LW}, Li and Wu constructed a family of Hilbert schemes of $n$ points $\pi^{[n]}: \cYn\to U$, whose smooth fiber over $t\neq \infty$ is $Y_t^{[n]}$, the Hilbert scheme of $n$ points  on the smooth fiber $Y_t$ of $\pi$. 
To compactify this moduli space, one can replace $\cY$ by a new space $\cY[n]$ so that $\cY$ and $\cY[n]$ have same smooth fibers over $t\neq \infty$, but over $\infty$  the fiber of $\cY[n]$ is a semistable model 
$$\cY[n]_{\infty}=Y_1 \cup \Delta_1 \cup \Delta_2\cup \ldots \Delta_{n-1}\cup Y_2,$$ 
where $\Delta_i\cong \PP_B(\cO_B\oplus N_{Y_1/B})$. 
The fiber of $\cYn$ over $\infty$ consists of length $n$ zero-dimensional subschemes supported on the smooth locus of $\cY[n]_{\infty}$. 
Since $\cY[n]_{\infty}$ is a chain, any zero-dimensional subscheme $Z$ on $\cY[n]_{\infty}$ can be decomposed into $Z_1\cup Z_2$ where $Z_1$ is supported on $Y_1 \cup \Delta_1 \cup \ldots \cup \Delta_i$ and $Z_2$ is supported on $\Delta_{i+1} \cup \ldots \cup \Delta_{n-1}\cup Y_2$ for some $i$. 
These  $Z_1$ and $Z_2$ belong to the relative Hilbert scheme $(Y_i/B)^{[k]}$ and the decomposition gives $$\cY^{[n]}|_{\infty} = \cup_{k=0}^n (Y_1/B)^{[k]}\times (Y_2/B)^{[n-k]}. $$
Li and Wu proved that the moduli stack $\cYn$ is a separated and proper Deligne-Mumford stack of finite type over $U$. 

Suppose  $U = \PP^1$ and vector bundle  $\cE$  on $\cY$ with $\pi$ defines a double point relation, let $E_i$ denote the restriction of $\cE$ to $Y_i$ for $i=0,1,2$ and call $E_{3}$ the pullback of the restriction of $\cE$ to $B$ via the morphism $Y_3\to B$. 
The tautological bundle $\cEn$ on $\cYn$ exists.
A straightforward generalization of \cite[Lemma~3.7]{Tz} shows the restriction of $\cEn$ on smooth fibers $Y_t^{[n]}$ is the tautological bundle  $(\cE|_{Y_t})^{[n]}$ and the restriction  on  $(Y_1/B)^{[k]} \times (Y_2/B)^{[n-k]}$ is the direct sum of tautological bundles on relative Hilbert schemes\footnote{More precisely, there is a natural way to extend bundles to the expanded relative pairs of $Y_1\cup_B Y_2$. See Section 2.3 of \cite{LW}.} $E_1^{[k]}\oplus E_2^{[n-k]}$. 

Let $\cD$ be a smooth effective divisor on $\cY$ which intersects $Y_i$ and $B$ transversally along smooth divisors. 
Denote the intersection $\cD\cap Y_i$ by $D_i$ for $i=0,1,2$ and call $D_{3}$ the inverse image of $\cD\cap B$ via the morphism $Y_3\to B$.
Because $\cD$ intersects $B$ transversally and $\cY[n]$ is constructed by blowups, $\cD$ can be naturally lifted to a divisor $\cD[n]$ in $\cY[n]$ and its restriction $\cD[n]_{\infty}$ on $\cY[n]_{\infty}$ are union  of smooth divisors on $Y_1$, $Y_2$ and all $\Delta_i$.

Recall in Section 4.3 the subsets $Y^0_{\Omega}(\alpha,\beta, \delta)$ and $Y_{\Omega}(\alpha,\beta, \delta)$ of $Y^{[N(\alpha)+N(\beta)+N(\delta)]}$ are defined.
If the following assumption is satisfied,  these subsets can be extended to the family $\cY\to U$.

\begin{assu}\label{sections} Assume there are   sections $\{\sigma_{i}\}_{1\leq i\leq |\alpha|}$  of the family $\cD\to U$ whose images are smooth curves in $\cD$ and disjoint from  $B$. In addition, assume these sections are disjoint  over an open subset of $U$ which contains $0$ and $\infty$. By replacing $U$ with this open subset,  we can assume the sections are disjoint everywhere.  
\end{assu}
Call  
$\Omega_0 =\{\sigma_{i}(0)\}_{1\leq i\leq |\alpha|}$,  
 $\Omega_1 =\{\sigma_{i}(\infty)\}_{1\leq i\leq |\alpha|}\cap D_1$,   
$\Omega_2 =\{\sigma_{i}(\infty)\}_{1\leq i\leq |\alpha|} \cap D_2$.
Let $\alpha_j$ be collections of ICIS satisfying $\alpha_0=\alpha$ and $\alpha = \alpha_1\cup \alpha_2$. 
On the relative Hilbert scheme $(Y_j/B)^{[N(\alpha_j)+N(\beta_j)+N(\delta_j)]}$, 
the subset  $(Y_j/B)^0_{\Omega_j}(\alpha_j, \beta_j, \delta_j)$ can be defined in a similarly way to  $Y^0_{\Omega}(\alpha,\beta, \delta)$ except now all subschemes are in the relative Hilbert scheme of $Y_j/B$ and $\coprod_{i=1} ^{|\alpha_j|+|\beta_j|} \eta_i  $ are supported on $\cD[n]_{\infty}$ for some $n$. 
Let $(Y_j/B)_{\Omega_j}(\alpha_j, \beta_j, \delta_j)$ be the closure of $(Y_j/B)^0_{\Omega_j}(\alpha_j, \beta_j, \delta_j)$ with reduced induced structure and 
$d_{\alpha_j,\beta_j, \delta_j}^{\Omega_j} (Y_j/B, D_j, E_j)$ be the degree of the zero cycle 
$$c_{r(N(\alpha_j)+N(\beta_j)+N(\delta_j))-\codim(\alpha_j,\beta_j)-\tau(\delta_j)}(E_j^{[N(\alpha_j)+N(\beta_j)+N(\delta_j)]})\cap [(Y_j/B)_{\Omega_j}(\alpha_j, \beta_j, \delta_j)].$$

\begin{lemm}\label{1-cycle} If $\cY\to U$ satisfies Assumption \ref{sections}. For every $\alpha$, $\beta$, and $\delta$, there is a flat $1$-cycle  $\cY(\alpha,\beta, \delta) \subset \cY^{[N(\alpha)+N(\beta)+N(\delta)]}$ over $U$ such that 
\begin{align*} \cY(\alpha,\beta, \delta) \cap (Y_0)^{[N(\alpha)+N(\beta)+N(\delta)]} =(Y_0)_{\Omega_0}(\alpha,\beta, \delta)\text{ and } \\
\cY(\alpha,\beta, \delta)\cap \left({(Y_1/B)^{[m]}\times (Y_2/B)^{[N(\alpha)+N(\beta)+N(\delta)-m]}}\right)
=  \\ \bigcup (Y_1/B)_{\Omega_1}(\alpha_1,\beta_1, \delta_1)\times (Y_2/B)_{\Omega_2}(\alpha_2,\beta_2, \delta_2),  \end{align*}
where the union is over all  $\beta_j$  and  $\delta_j$ satisfying $N(\alpha_1)+N(\beta_1)+N(\delta_1)=m$,  $\beta=\beta_1 \cup \beta_2$ and $\delta=\delta_1\cup\delta_2$.
.  
\end{lemm}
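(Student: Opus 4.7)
The plan is to build $\cY(\alpha,\beta,\delta)$ as the scheme-theoretic closure in $\cY^{[N]}$ (with $N=N(\alpha)+N(\beta)+N(\delta)$) of an explicit flat family defined away from the central fiber, and then to identify the special fiber using the stratification of $\cY^{[N]}|_\infty$ from Li--Wu.

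First, working over $U^\circ := U\setminus\{\infty\}$, where $\pi$ is smooth and $\cY^{[N]}|_{U^\circ}$ is the ordinary relative Hilbert scheme, define
\[
\cY^0(\alpha,\beta,\delta)\subset \cY^{[N]}|_{U^\circ}
\]
as the locally closed subset whose fiber over $t\in U^\circ$ is $(Y_t)^0_{\Omega_t}(\alpha,\beta,\delta)$ with $\Omega_t=\{\sigma_i(t)\}_i$, using Assumption \ref{sections} to make sense of the assigned points in families. The conditions (3)--(6) defining $Y^0_\Omega$ are open-and-constructible in families and the sections $\sigma_i$ are regular, so this subscheme is flat over $U^\circ$ with the desired reduced fiber structure. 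I would then let $\cY(\alpha,\beta,\delta)$ be its scheme-theoretic closure in $\cY^{[N]}$. Since $\cY^{[N]}\to U$ is a proper Deligne--Mumford stack of finite type \cite{LW} and $U$ is a smooth curve, the closure is automatically $U$-flat, which gives the first identity of the lemma and reduces the problem to computing the central fiber.

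Second, I would identify $\cY(\alpha,\beta,\delta)\cap\cY^{[N]}|_\infty$. By construction $\cY^{[N]}|_\infty=\bigcup_k (Y_1/B)^{[k]}\times(Y_2/B)^{[N-k]}$ and each length-$N$ subscheme on $\cY[N]_\infty=Y_1\cup\Delta_1\cup\cdots\cup\Delta_{N-1}\cup Y_2$ splits uniquely along the chain. For a generic limit of $\coprod_i\eta_i$, each $\eta_i$ is supported at a single point of the nearby smooth fiber and therefore specializes onto one side of the chain only. For the assigned components the disjointness of the $\sigma_i$ from $B$ forces $\sigma_i(\infty)\in D_1\setminus B$ or $D_2\setminus B$, which fixes the partition $\alpha=\alpha_1\cup\alpha_2$; for the unassigned and non-induced singular components every partition $\beta=\beta_1\cup\beta_2$, $\delta=\delta_1\cup\delta_2$ can be realized by placing generic supports on either side. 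The expanded-pair convention of \cite{LW} absorbs any pieces supported on the intermediate $\Delta_j$ into the relative Hilbert schemes $(Y_j/B)^{[\,\cdot\,]}$, so the limits land precisely in $(Y_1/B)_{\Omega_1}(\alpha_1,\beta_1,\delta_1)\times(Y_2/B)_{\Omega_2}(\alpha_2,\beta_2,\delta_2)$.

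The step I expect to be the hardest is promoting the evident inclusion of $\cY(\alpha,\beta,\delta)|_\infty$ into the claimed union to an equality, i.e.\ showing every component on the right is hit. The approach is a dimension count combined with flatness: the generic fiber of $\cY(\alpha,\beta,\delta)\to U$ has pure dimension $rN-c$, each product stratum on the right also has dimension $rN-c$ by additivity of the dimension computation of Lemma~\ref{independent} across $\alpha_j,\beta_j,\delta_j$, and $U$-flatness of the closure forces conservation of dimension on every irreducible component of the special fiber. It remains to check that a generic point of each stratum genuinely smooths to a nearby $(Y_t)^0_{\Omega_t}(\alpha,\beta,\delta)$; this follows from the local product structure of $\cY[N]$ along its relative smooth loci together with the versality of the miniversal deformations of each ICIS $\alpha_i,\beta_i,\delta_i$, since these can be deformed across the node by choosing arbitrary lifts to the total space $\cY$.
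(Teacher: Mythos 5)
Your construction is the same as the paper's (take the closure of the union of the loci $(Y_t)^0_{\Omega_t}(\alpha,\beta,\delta)$ over the smooth fibers, and get flatness from reducedness plus dominance of the curve $U$ --- not, as you say, from properness of the Li--Wu stack), so the whole content of the lemma is the identification of the fiber over $\infty$, and this is where your argument has a genuine gap. For the inclusion $\cY(\alpha,\beta,\delta)\cap \cY^{[N]}|_\infty \subseteq \bigcup (Y_1/B)_{\Omega_1}(\alpha_1,\beta_1,\delta_1)\times (Y_2/B)_{\Omega_2}(\alpha_2,\beta_2,\delta_2)$ you only analyze ``generic limits'' in which each $\eta_i$ persists as a separate punctual piece sitting on one side of the chain. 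But the special fiber of the closure consists of \emph{all} specializations, including ones where the supports of different $\eta_i$ collide, where a punctual piece degenerates to a subscheme no longer isomorphic to $\xi(\cdot)$, or where supports slide toward $B$ and end up on inserted components $\Delta_j$; nothing in your proposal places such a limit inside the product of the \emph{closed} loci. The paper's proof handles exactly this point: any limit $z$ is supported on the smooth locus of the expanded degeneration, one chooses analytic neighborhoods $V_{p_k}$ of its support points over which $\pi$ is a trivial fibration (with $\cD$ a coordinate divisor and the $\sigma_i$ giving the $U$-direction), and then \emph{projects an approximating sequence} $z_j\subset \cY^0(\alpha,\beta,\delta)$ into the fiber over $\infty$; since this projection preserves isomorphism types, preserves $\cD$, and sends $\sigma_i(t)$ to $\sigma_i(\infty)$, it exhibits $z$ as a limit of points of the open product strata, hence as a point of the product of closures. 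You invoke the local product structure only in the opposite direction (for smoothing the strata), so this half of the argument is missing; without it the special fiber could a priori have extra components and the degeneration formula of Corollary \ref{d=dd} would not follow.

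Two further repairs are needed in your reverse inclusion. First, the appeal to ``versality of the miniversal deformations of each ICIS, deformed across the node'' is misplaced: a generic point of $(Y_1/B)^0_{\Omega_1}\times(Y_2/B)^0_{\Omega_2}$ is supported away from $B$, so there is no node to deform across, and no deformation theory of the singularities is used --- one simply transports each punctual subscheme isomorphically into nearby smooth fibers along the fibers of the local trivializations (keeping the $\alpha$- and $\beta$-pieces on $\cD$ and the assigned pieces on the sections $\sigma_i$), which is the paper's argument and which directly shows every open stratum, hence its closure, lies in $\cY(\alpha,\beta,\delta)$. Second, your dimension-count-plus-flatness step only shows that each component of the special fiber has dimension $rN-c$ and (granting the first inclusion and irreducibility of the strata, which you do not address) coincides with some stratum; it cannot by itself show that \emph{every} stratum on the right occurs, so the transport argument is not optional but is the actual content of that direction. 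With those two points fixed your outline becomes essentially the paper's proof; the dimension count is then superfluous.
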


\begin{proof}
For simplicity we write $N=N(\alpha)+N(\beta)+N(\delta)$ and $\Omega_t=\{\sigma_{i}(t)\}$.
Let $\cY^0(\alpha,\beta, \delta)$ be the union of all $(Y_t)^0_{\Omega_t}(\alpha,\beta, \delta)$ for all smooth fibers $Y_t$ over $t\in U$. 
Define $\cY(\alpha,\beta, \delta)$ to be the closure of $\cY^0(\alpha,\beta, \delta)$ in $\Hilb{\cY}{N}$, then by definition $\cY(\alpha,\beta, \delta) \cap Y_0^{[N]} =(Y_0)_{\Omega_0}(\alpha,\beta, \delta)$.
Since the fibers of $\pi$ are smooth away from $B$,  for every point $p$ in $\cY\backslash B$ one can choose an analytic neighborhood $V_p$ such that $\pi: V_p \to U$ is a trivial fibration over its image in $U$. 
Since $\cD$ and the image of sections $\sigma_{i}$ are a submanifolds of $\cY$, we can further assume $\cD$ is  the zero locus of a coordinate function of $V_p$ if $p\in D$ and if $p$ is in the image of a section $\sigma_{i}$ then the image locally gives the coordinate along the direction of $U$.

Assume $z$ is a point in $\cY(\alpha,\beta, \delta)$ which lies over $\infty$. 
Write $z=\cup z_k$  so that each $z_k$ is only supported at one point  $p_k$.
Then every $p_k$ must  belong to the smooth locus of a component $\Delta$ of  a semistable model $\cY[n]_{\infty}=Y_1 \cup \Delta_1 \cup \Delta_2\cup \ldots \Delta_{n-1}\cup Y_2$. 
Since $z$ is in the closure of  $\cY^0(\alpha,\beta, \delta)$, there is a sequence of points $\{z_j\}$ in $\cY^0(\alpha,\beta, \delta)$ approaching $z$. 
By shrinking $V_{p_k}$ if necessary and taking  large enough $j$, we can assume  $\{V_{p_k}\}$ are pairwise disjoint and all $z_j$ are contained in $\cup_k  V_{p_k}$.
As a result, the sequence $\{z_j\}$ can be decomposed into disjoint sequences $z_{jk}\to z_k$ in  $V_{p_k}$. 
Since $V_{p_k}\subset \cY \to U$ is a trivial fibration over its image, there is another trivial projection in the complementary direction called
$q_k: V_{p_k} \to \Delta $ which projects $V_{p_k}$ along the direction of $U$ onto a small open neighborhood of $p_k$ on  $\Delta$.  

Because $z_{jk}$ are subschemes of fibers,  the trivial projection $q_k$  does not change their isomorphism types. 
In addition, our choice of coordinate functions assures that  $q_k$ sends points on $\cD$ to $\cD$ and  $\sigma_{i}(t)$ to $\sigma_{i}(\infty)$.     
Therefore $\{\cup_k q_k(z_{jk})\}$ is a sequence in $(Y_1/B)^0_{\Omega_1}(\alpha_1,\beta_1, \delta_1)\times (Y_2/B)^0_{\Omega_2}(\alpha_2,\beta_2, \delta_2)$ for some  $\beta_j$, $\delta_j$ and $\Omega_j$ and its limit $z$ is in the closure 
$(Y_1/B)_{\Omega_1}(\alpha_1,\beta_1, \delta_1)\times (Y_2/B)_{\Omega_2}(\alpha_2,\beta_2, \delta_2)$.

On the other hand,  $(Y_1/B)^0_{\Omega_1}(\alpha_1,\beta_1, \delta_1)\times (Y_2/B)^0_{\Omega_2}(\alpha_2,\beta_2, \delta_2)$ is a subset  of $\cY(\alpha,\beta, \delta)$  because the trivialized neighborhood allows subschemes  to be moved away from the fiber of $\infty$ along fibers of $q_k$. So for every point in $(Y_1/B)^0_{\Omega_1}(\alpha_1,\beta_1, \delta_1)\times (Y_2/B)^0_{\Omega_2}(\alpha_2,\beta_2, \delta_2)$, we can create a sequence in  $\cY^0(\alpha,\beta, \delta)$ approaching it. 
Therefore  $\cY(\alpha,\beta, \delta)$, defined as the closure of  $\cY^0(\alpha,\beta, \delta)$, contains $(Y_1/B)_{\Omega_1}(\alpha_1,\beta_1, \delta_1)\times (Y_2/B)_{\Omega_2}(\alpha_2,\beta_2, \delta_2)$ because the latter is  the closure of $(Y_1/B)^0_{\Omega_1}(\alpha_1,\beta_1, \delta_1)\times (Y_2/B)^0_{\Omega_2}(\alpha_2,\beta_2, \delta_2)$. 
 
The irreducible components of the open part $\cY^0(\alpha,\beta, \delta)$  dominate the curve $U$. 
Therefore the closure of every irreducible component is flat over $U$ and their union   $\cY(\alpha,\beta, \delta)$ is also flat. 
\end{proof}


If $H_1$, $H_2$ are two different hyperplanes in $\PP^n$ and $p$ is a point in $H_2$ but not in $H_1$, then any  $H_1'$, $H_2'$ and $p'$ satisfying the same conditions are the images of $H_1$, $H_2$ and $p$ by an linear coordinate change of $\PP^n$. 
Therefore if $|\alpha|= 1$, $d_{\alpha, \beta, \delta}^{p}(\PP^n/H_1,H_2,\cO^{\oplus r})$ is independent of the choice of $H_i$ and $p$ and $p$ can be dropped in the notation. 
\begin{coro}\label{goalpha}
If $\Omega$ is an ordered list of $|\alpha|$ distinct points on $D$ and $\alpha_1$, $p_1$ are the first singularity and point in $\alpha$ and $\Omega$ respectively. 
Let $\pi:Bl_{p_1} Y \to Y$ be the blowup of $Y$ at the point $p_1$ with exceptional divisor $E_1$, then 
\begin{align*}d_{\alpha,\beta, \delta}^{\Omega}(Y,D,E) &=  \sum d_{\alpha\backslash\alpha_1,\beta_1,\gamma_1}^{\Omega\backslash p_1}(Bl_{p_1}Y/E_1,Bl_{p_1} D,\pi^*E) d_{\alpha_1,\beta_2,\gamma_2}(\PP^n/H_1,H_2,\cO^{\oplus r}),\\
d_{\alpha\backslash\alpha_1,\beta,\gamma}^{\Omega\backslash p_1}(Y,D,E) &=  \sum d_{\alpha\backslash\alpha_1,\beta_1,\gamma_1}^{\Omega\backslash p_1}(Bl_{p_1}Y/E_1,Bl_{p_1} D,\pi^*E) d_{\varnothing,\beta_2,\gamma_2}(\PP^n/H_1,H_2,\cO^{\oplus r}).
\end{align*}
where $H_1$ and $H_2$ are two distinct hyperplanes in $\PP^n$ and both sums are over all $\beta_j$ and  $\delta_j$ satisfying  $\beta=\beta_1\cup\beta_2$ and $\delta=\delta_1\cup\delta_2$.
\end{coro}
\begin{proof}
Let $\cY\to \PP^1$ be the family of blowing up the point $p_1\times {\infty}$ in the trivial family $Y\times \PP^1\to \PP^1$. 
$\cY$ carries the pullback of $E$ via the map $\cY\to Y\times \PP^1\to Y$  and the proper transform of $D\times \PP^1$. 
The vector bundle and divisor on $\cY$ restrict to the fiber over $0$ is $[Y,D,E]$ and over $\infty$ is the union of two components $[Bl_{p_1}Y/E_1,Bl_{p_1} D,\pi^*E]$ and $[\PP^n/H_1,H_2,\cO^{\oplus r}]$. 

$\Omega\backslash p_1\times \PP^1$ induce sections of $\cY\to\PP^1$ which satisfy Assumption \ref{sections}.  
Applying Lemma \ref{1-cycle} to this family and taking degree at $0$ and $\infty$ 
give the second equation. 

A local computation shows the proper transform of the line $\{p_1\}\times \PP^1$  gives a section of $\cY\to \PP^1$ and this section does not intersect $E_1$. 
The union of this section and the sections above satisfy Assumption \ref{sections}. 
Applying Lemma \ref{1-cycle} to this family and taking degree at $0$ and $\infty$ 
give the first equation. 
\end{proof}

If  $\alpha$ is the empty set $\varnothing$,  Assumption \ref{sections} is trivially satisfied. By taking the degree of the $1$-cycle in Lemma \ref{1-cycle} again  we have

\begin{coro} \label{d=dd2} If ($\pi: \cY\to \PP^1$, $\cD$, $\cE$) gives a double point relation 
$$[Y_0, D_0, E_0] - [Y_1, D_1, E_1]-[Y_2, D_2, E_2]+[Y_3, D_3, E_3].$$
Then 
$$d_{\varnothing,\beta,\gamma}(Y_0,D_0,E_0) = 
\sum d_{\varnothing,\beta_1,\gamma_1}(Y_1/B,D_1,E_1) d_{\varnothing,\beta_2,\gamma_2}(Y_2/B,D_2,E_2)$$
where the sum is over all  $\beta_j$ and  $\delta_j$ satisfying  $\beta=\beta_1\cup\beta_2$ and $\delta=\delta_1\cup\delta_2$.
\end{coro}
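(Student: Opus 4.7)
The plan is to repeat the argument of Corollary \ref{d=dd} essentially verbatim, observing that Assumption \ref{sections} was used in Lemma \ref{1-cycle} only to track the $|\alpha|$ assigned points across the family $\cY\to U$, and this piece of data is vacuous once $\alpha=\varnothing$. So the first step is to revisit Lemma \ref{1-cycle} with $\alpha=\varnothing$ and verify that the flat $1$-cycle $\cY(\varnothing,\beta,\delta)\subset \cY^{[N]}$ (with $N=N(\beta)+N(\delta)$) still exists without the sections $\sigma_i$. Since $(Y_t)^0(\varnothing,\beta,\delta)$ does not refer to any $\Omega_t$, one simply sets
\[
\cY^0(\varnothing,\beta,\delta)=\bigcup_{t\in U\setminus\{\infty\}}(Y_t)^0(\varnothing,\beta,\delta),
\]
and defines $\cY(\varnothing,\beta,\delta)$ to be its closure in the Li--Wu relative Hilbert scheme $\cY^{[N]}$ with reduced induced structure.

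Next I would show that the special-fiber description still holds, namely
\[
\cY(\varnothing,\beta,\delta)\cap\bigl((Y_1/B)^{[m]}\times(Y_2/B)^{[N-m]}\bigr)
=\bigcup(Y_1/B)(\varnothing,\beta_1,\delta_1)\times(Y_2/B)(\varnothing,\beta_2,\delta_2),
\]
summed over decompositions $\beta=\beta_1\cup\beta_2$, $\delta=\delta_1\cup\delta_2$ with $N(\beta_1)+N(\delta_1)=m$. The argument is literally the one in Lemma \ref{1-cycle}: away from $B$, the family $\cY\to U$ admits local analytic trivializations $V_p\to \Delta$ respecting $\cD$; any limit point $z=\cup z_k$ over $\infty$ supported on the smooth locus of the semistable model $\cY[n]_\infty$ can be pushed back along these trivializations to produce nearby fibers realizing the appropriate product structure. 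Conversely, the trivializations move any product-type subscheme on the special fiber into $\cY^0$. The only place where Assumption \ref{sections} entered the original proof was to guarantee that the trivialization $q_k$ sends $\sigma_i(t)$ to $\sigma_i(\infty)$, preserving the assigned-point structure; with $\alpha=\varnothing$ this compatibility is trivially satisfied because there is nothing to preserve. Flatness of $\cY(\varnothing,\beta,\delta)$ over $U$ follows as before from the fact that its open part dominates $U$ and the scheme structure is reduced.

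With the $1$-cycle in hand, I would then form the class
\[
c_{rN-c}(\cE^{[N]})\cap[\cY(\varnothing,\beta,\delta)]
\]
in $\cY^{[N]}$, where $c=\tau(\beta)+\tau(\delta)$. By construction this is a flat family of zero-cycles over $U$, hence has constant degree on fibers. On the fiber over $0$ the degree is, by definition, $d_{\varnothing,\beta,\delta}(Y_0,D_0,E_0)$. On the fiber over $\infty$, the product description of the special fiber together with the fact (recalled in Section \ref{rel}) that the tautological bundle $\cE^{[N]}$ restricts on $(Y_1/B)^{[m]}\times(Y_2/B)^{[N-m]}$ to $E_1^{[m]}\boxplus E_2^{[N-m]}$, gives
\[
\sum_{\substack{\beta=\beta_1\cup\beta_2\\ \delta=\delta_1\cup\delta_2}} d_{\varnothing,\beta_1,\delta_1}(Y_1/B,D_1,E_1)\cdot d_{\varnothing,\beta_2,\delta_2}(Y_2/B,D_2,E_2).
\]
Equating the two degrees yields the claimed identity.

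The main obstacle I expect is checking that the closure argument really does not lose any limiting component when the sections $\sigma_i$ are removed. More precisely, one must ensure that no irreducible component of $\cY(\varnothing,\beta,\delta)|_\infty$ is missed, and conversely that every product $(Y_1/B)(\varnothing,\beta_1,\delta_1)\times(Y_2/B)(\varnothing,\beta_2,\delta_2)$ is genuinely approached from $\cY^0$ over a nearby fiber. This is exactly the step where the original proof used the trivialized coordinate neighborhoods $V_{p_k}$; once one verifies that these still give a bijection between limits of sequences and decompositions of $z$ on the semistable chain when no marked sections are present, the rest of the argument is formal.
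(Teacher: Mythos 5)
Your proposal is correct and follows essentially the same route as the paper: the paper simply observes that with $\alpha=\varnothing$ Lemma \ref{1-cycle} holds without Assumption \ref{sections} (the sections were only needed to track the assigned points), and then repeats the argument of Corollary \ref{d=dd}, capping the flat $1$-cycle with $c_{rN-c}(\cE^{[N]})$ and comparing degrees over $0$ and $\infty$ using the product description of the special fiber and of the tautological bundle. Your added care about the closure/limit argument is exactly the content the paper compresses into ``it is easy to see.''
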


\section{Universal polynomials and generating series} \label{sec:univ}

In this section, we will combine the two degeneration formulae (Corollary \ref{goalpha} and \ref{d=dd2}) and  the algebraic cobordism group of bundles and divisors on varieties $\nu_{n,1,r}$ in Section \ref{sect:algcobordism} to prove the main Theorems.

For  a single ICIS $\delta$, let $z_{\delta}$ be a formal variable indexed by $\delta$. 
For  a collection of ICIS $\delta=\{\delta_i\}$, we write $z_{\delta}=\prod_i z_{\delta_i}$.  
It is easy to see  $z_{\delta_1}\cdot z_{\delta_2}$ is equal to $z_{\delta}$ if and only if $\delta$ is the union of $\delta_1$ and $\delta_2$ and the multiplication is commutative. 
For a collections of ICIS $\beta$,  variables and their products $y_{\beta}$ are defined in the same way. 

Let $\QQ[[y_{\beta}, z_{\delta}]]$ be the formal power series ring in $y_{\beta}, z_{\delta}$ and $\QQ[[y_{\beta}, z_{\delta}]]^{\times}$ be the multiplicative group of invertible series.

\begin{defn}
For a rank $r$ vector bundle $E$  and a smooth divisor $D$ on a smooth $n$-dimensional variety $Y$, a fixed collection of dimension $n-r-1$ ICIS $\alpha$ and an ordered list of $|\alpha|$ distinct points $\Omega\subset D$, 
define the generating series
$$T_{\alpha}^{\Omega}(Y,D,E)=\sum_{\beta,\delta} d^{\Omega}_{\alpha,\beta,\delta}(Y,D,E) y_{\beta}z_{\delta}. $$
If $B$ is a smooth divisor of $Y$ which intersects $D$ transversally at smooth divisors, let
$$T_{\alpha}^{\Omega}(Y/B,D,E)=\sum_{\beta,\delta} d_{\alpha,\beta,\delta}^{\Omega}(Y/B,D,E) y_{\beta}z_{\delta}.$$
The sums are over all collections of  dimension $n-r-1$ ICIS $\beta$ and collections of  dimension $n-r$ ICIS $\delta$. 
\end{defn}

\begin{rem} 
By definition if  $\alpha$, $\beta$ and $\delta$ are all empty, $d_{\alpha,\beta,\delta}(Y,D,E)=1$. It makes sense because $1$ is the degree of $\PP(H^0(E))$ in itself. 

\end{rem}

\begin{prop}\label{Pi} For each ICIS $\alpha_i$, there exists a series $ P_{\alpha_i}\in \QQ[[y_{\beta}, z_{\delta}]]$ such that 
$$T_{\alpha}^{\Omega}(Y,D,E) = T_{\varnothing}(Y,D,E)\cdot \prod_{\alpha_i\in \alpha} P_{\alpha_i}.$$
As a result, $T_{\alpha}^{\Omega}(Y,D,E)$ and $d_{\alpha}^{\Omega}(Y,D,E)$ are independent of $\Omega$. 
\end{prop}
\begin{proof}
By Corollary \ref{goalpha}, 
\begin{align*}
T_{\alpha}^{\Omega}(Y,D,E) &=  T_{\alpha\backslash\alpha_1}^{\Omega\backslash p_1}(Bl_{p_1}Y/E_1,Bl_{p_1} D,\pi^*E) T_{\alpha_1}(\PP^n/H_1,H_2,\cO^{\oplus r}) \\
T_{\alpha\backslash\alpha_1}^{\Omega\backslash p_1}(Y,D,E) &=  T_{\alpha\backslash\alpha_1}^{\Omega\backslash p_1}(Bl_{p_1}Y/E_1,Bl_{p_1} D,\pi^*E) T_{\varnothing}(\PP^n/H_1,H_2,\cO^{\oplus r})
\end{align*}
Therefore if one lets 
$$P_{\alpha_1} = T_{\alpha_1}(\PP^n/H_1,H_2,\cO^{\oplus r})/ T_{\varnothing}(\PP^n/H_1,H_2,\cO^{\oplus r})$$
then
$$T_{\alpha}^{\Omega}(Y,D,E) =  T_{\alpha\backslash\alpha_1}^{\Omega\backslash p_1}(Y,D,E)
\cdot P_{\alpha_1}.$$
The desired equation follows from applying this equality many times. 
\end{proof}

By Proposition \ref{Pi}, it is safe to write $T_{\alpha}(Y,D,E)$ and $d_{\alpha}(Y,D,E)$ because they are independent of $\Omega$.

\begin{prop} \label{deg formula} Suppose ($\pi: \cY\to \PP^1$, $\cD$, $\cE$) gives a double point relation
$$[Y_0, D_0, E_0] - [Y_1, D_1, E_1]-[Y_2, D_2, E_2]+[Y_3, D_3, E_3]$$
 in the algebraic cobordism group of divisors and bundles constructed in Section 3. Then 
\begin{align}\label{eq:homo}T_{\varnothing}(Y_0, D_0, E_0) = \frac{T_{\varnothing}(Y_1, D_1, E_1) T_{\varnothing}(Y_2, D_2, E_2) }{T_{\varnothing}(Y_3, D_3, E_3) }.\end{align}
In other words,  $T_{\varnothing}$ induces a  homomorphism from the algebraic cobordism group $\nu_{n,1,r}$ to $\QQ[[y_{\beta}, z_{\delta}]]^{\times}$.
\end{prop}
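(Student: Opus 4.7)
The plan is to deduce the multiplicative identity directly from the additive degeneration formula (Corollary \ref{d=dd2}) plus three auxiliary applications of the same formula to deformations to a normal cone. First, $d_{\varnothing,\varnothing,\varnothing}(Y,D,E)=1$, so every $T_\varnothing(Y,D,E)$ has constant term $1$ and lies in $\CC[[y_\beta,z_\delta]]^\times$. Using that $y_\beta y_{\beta'}=y_{\beta\cup\beta'}$ and $z_\delta z_{\delta'}=z_{\delta\cup\delta'}$, Corollary \ref{d=dd2} immediately repackages into the relative-to-absolute identity
\begin{equation}\label{eq:abs-rel}
T_\varnothing(Y_0,D_0,E_0)=T_\varnothing(Y_1/B,D_1,E_1)\cdot T_\varnothing(Y_2/B,D_2,E_2).
\end{equation}
So what remains is to trade each relative factor on the right for an absolute one, at the cost of a $T_\varnothing(Y_3/B,D_3,E_3)$-term, and to account separately for $T_\varnothing(Y_3,D_3,E_3)$.

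For this, I would apply Corollary \ref{d=dd2} to three further double point degenerations, each obtained by deformation to a normal cone of a smooth divisor, with $\cD$ and $\cE$ extended by pullback. Degenerating $Y_1$ to the normal cone of $B\subset Y_1$ gives the central fibre $Y_1\cup_B \PP(\cO_B\oplus N_{Y_1/B})=Y_1\cup_B Y_3$, hence
\[
T_\varnothing(Y_1,D_1,E_1)=T_\varnothing(Y_1/B,D_1,E_1)\cdot T_\varnothing(Y_3/B,D_3,E_3).
\]
The analogous deformation on $Y_2$, using $\PP(\cO_B\oplus N_{Y_2/B})\cong Y_3$, yields
\[
T_\varnothing(Y_2,D_2,E_2)=T_\varnothing(Y_2/B,D_2,E_2)\cdot T_\varnothing(Y_3/B,D_3,E_3).
\]
Finally, degenerating $Y_3$ to the normal cone of its zero section $B\subset Y_3$ produces a central fibre $Y_3\cup_B Y_3$, since the normal bundle of the zero section of $\PP(\cO_B\oplus N_{Y_1/B})$ is again $N_{Y_1/B}$; one application of Corollary \ref{d=dd2} then gives $T_\varnothing(Y_3,D_3,E_3)=T_\varnothing(Y_3/B,D_3,E_3)^2$.

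Substituting these three identities into the right-hand side of \eqref{eq:homo} cancels two factors of $T_\varnothing(Y_3/B,D_3,E_3)$ against $T_\varnothing(Y_3,D_3,E_3)$ and reduces the claim to \eqref{eq:abs-rel}. Because $\nu_{n,1,r}$ is generated by classes $[M,D,E]$ modulo the subgroup generated by double point relations, and \eqref{eq:homo} is exactly the image under $[M,D,E]\mapsto T_\varnothing(M,D,E)$ of the relation $[Y_0]-[Y_1]-[Y_2]+[Y_3]=0$, this assignment descends to a homomorphism $\nu_{n,1,r}\to\CC[[y_\beta,z_\delta]]^\times$.

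The main technical obstacle I foresee is the last of the three auxiliary degenerations: one must check that the deformation to the normal cone of $B\subset Y_3$, together with the natural extensions of $D_3$ and $E_3$, genuinely satisfies the hypotheses of Corollary \ref{d=dd2} (smooth total space, and the divisor meeting both components of the central fibre and $B$ transversally), and that the two $Y_3$-components in the central fibre contribute \emph{equal} relative invariants. The latter rests on the canonical isomorphism $\PP(\cO\oplus N)\cong\PP(\cO\oplus N^{-1})$, which swaps the two distinguished sections of the ruled surface and transports the divisor and bundle data identically to the two copies.
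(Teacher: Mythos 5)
Your overall route is the same as the paper's: apply Corollary \ref{d=dd2} to the given family and to three deformations to the normal cone (of $B\subset Y_1$, of $B\subset Y_2$, and of a section of $Y_3\to B$), then cancel the relative factors. The one genuinely flawed point is precisely the claim you single out at the end, that the two $Y_3$-components in the third degeneration contribute \emph{equal} relative invariants because ``$\PP(\cO\oplus N)\cong\PP(\cO\oplus N^{-1})$ swaps the two distinguished sections.'' That canonical isomorphism is an isomorphism over $B$, and any isomorphism of pairs over $B$ must carry a section to a section with the same normal bundle; the two distinguished sections of $Y_3=\PP(\cO_B\oplus N_{Y_1/B})$ have normal bundles $N_{Y_1/B}$ and $N_{Y_1/B}^{-1}\cong N_{Y_2/B}$, so no such swap exists unless $N_{Y_1/B}^{\otimes 2}$ is trivial. (Concretely, for $B=\PP^1$ and $N=\cO(1)$, $Y_3$ is the Hirzebruch surface $\FF_1$, and no automorphism carries the $(-1)$-section to a $(+1)$-section.) Hence the identity $T_{\varnothing}(Y_3,D_3,E_3)=T_{\varnothing}(Y_3/B,D_3,E_3)^2$, with a single meaning of the relative series, is unjustified, and your second and third displayed identities conflate two a priori different relative series.

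Fortunately that equality is never needed. Keep the two relative invariants distinct, as the paper does, writing $B+$ and $B-$ for the two distinguished sections: the normal-cone degenerations give
\begin{align*}
T_{\varnothing}(Y_1, D_1, E_1) &= T_{\varnothing}(Y_1/B, D_1, E_1)\, T_{\varnothing}(Y_3/B+, D_3, E_3),\\
T_{\varnothing}(Y_2, D_2, E_2) &= T_{\varnothing}(Y_2/B, D_2, E_2)\, T_{\varnothing}(Y_3/B-, D_3, E_3),\\
T_{\varnothing}(Y_3, D_3, E_3) &= T_{\varnothing}(Y_3/B+, D_3, E_3)\, T_{\varnothing}(Y_3/B-, D_3, E_3),
\end{align*}
the last because in that family the two copies of $Y_3$ are glued along \emph{opposite} sections, so exactly one factor relative to $B+$ and one relative to $B-$ appears. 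Multiplying the first two, dividing by the third, and comparing with your relative-to-absolute identity $T_{\varnothing}(Y_0,D_0,E_0)=T_{\varnothing}(Y_1/B,D_1,E_1)\,T_{\varnothing}(Y_2/B,D_2,E_2)$ cancels one factor of each relative series and yields \eqref{eq:homo}. With this bookkeeping correction your argument coincides with the paper's proof; the descent to a homomorphism on $\nu_{n,1,r}$ is then immediate, as you say.
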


\begin{proof}
Recall $B=Y_1\cap Y_2$. Lemma \ref{d=dd2} implies
$$T_{\varnothing}(Y_0, D_0, E_0) = T_{\varnothing}(Y_1/B, D_1, E_1) T_{\varnothing}(Y_2/B, D_2, E_2).$$

To derive a relation of generating series without relative terms, we  apply this equality to four families: $\cY$, 
the blowup of  $Y_1\times \PP^1$ along $B\times\{\infty\}$, 
the blowup of  $Y_2\times \PP^1$ along $B\times\{\infty\}$,  
and the blowup of  $Y_3\times \PP^1$ along $B\times\{\infty\}$.
The results are
\begin{align*}
T_{\varnothing}(Y_0, D_0, E_0) &= T_{\varnothing}(Y_1/B, D_1, E_1) T_{\varnothing}(Y_2/B, D_2, E_2),\\
T_{\varnothing}(Y_1, D_1, E_1) &= T_{\varnothing}(Y_1/B, D_1, E_1) T_{\varnothing}(Y_3/B_{\infty}, D_3, E_3),\\
T_{\varnothing}(Y_2, D_2, E_2) &= T_{\varnothing}(Y_2/B, D_2, E_2) T_{\varnothing}(Y_3/B_0, D_3, E_3),\\
T_{\varnothing}(Y_3, D_3, E_3) &= T_{\varnothing}(Y_3/B_{\infty}, D_3, E_3) T_{\varnothing}(Y_3/B_0, D_3, E_3).
\end{align*}

Here $B_{\infty}$ and $B_0$ are the section at $\infty$ and $0$ of the $\PP^1$-bundle $Y_3\to B$. 
Multiplying the first and fourth equations and  dividing by the second and third equations prove the proposition.
\end{proof}

Proposition \ref{deg formula} implies that  torsion elements in  $\nu_{n,1,r}(k)$  has generate series $1$ and $T_{\varnothing}(Y,D,E)$ can be determined by a basis of $\nu_{n,1,r}\otimes \QQ$, which corresponds to a basis of Chern numbers of $Y$, $D$, and $E$. 

\begin{theo}\label{thm:Ai} 
If $\{\Theta_1, \Theta_2,\ldots,  \Theta_m \}$ forms a basis of the finite-dimensional $\QQ$-vector space of graded degree $n$ polynomials in the Chern classes of 
		$$\{c_i(T_Y)\}_{i=0}^{n}, c_1(\cO(D)),\{c_i(E)\}_{i=0}^{r}.$$
	Then there exist power series $A_1$, $A_2, \ldots, A_m$ in  $\QQ[[y_{\beta}, z_{\delta}]]^{\times}$ such that 
\begin{align}\label{eq:Ai}T_{\alpha}(Y,D,E)
= \prod_{k=1}^m A_k^{\int_Y \Theta_k(c_i(T_Y), c_1(D), c_i(E))}\cdot \prod_{\alpha_i\in \alpha} P_{\alpha_i}. \end{align}
It follows that $T_{\varnothing}$ (and therefore its coefficients) is an invariant of the algebraic cobordism group $\nu_{n, 1, r}$.
 \end{theo}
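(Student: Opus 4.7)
The plan is to lift the degeneration formula of Proposition~\ref{deg formula} to an additive invariant of $\nu_{n,1,r}(k)\otimes_\ZZ\QQ$ and then use the perfect Chern-number pairing on that group to exhibit $T_{\varnothing}$ as the claimed monomial in the $A_k$.

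First I verify that $\log T_{\varnothing}$ descends to a $\QQ$-linear map on $\nu_{n,1,r}(k)\otimes_\ZZ\QQ$. Since $d_{\varnothing,\varnothing,\varnothing}=1$, every $T_{\varnothing}(Y,D,E)$ lies in $1+\fm$, where $\fm\subset\QQ[[y_\beta,z_\delta]]$ is the augmentation ideal, so its logarithm is a well-defined element of $\QQ[[y_\beta,z_\delta]]$. Under disjoint union, the Hilbert scheme $(Y_1\sqcup Y_2)^{[N]}$ splits as $\coprod_{a+b=N}Y_1^{[a]}\times Y_2^{[b]}$, the tautological bundle $(E_1\sqcup E_2)^{[N]}$ restricts to $E_1^{[a]}\oplus E_2^{[b]}$ on each component, and the cycle $(Y_1\sqcup Y_2)(\varnothing,\beta,\delta)$ is supported on those components for which $(a,b)$ matches a distribution $\beta=\beta_1\cup\beta_2$, $\delta=\delta_1\cup\delta_2$; a dimension count together with the Whitney formula then yields
\begin{equation*}
d_{\varnothing,\beta,\delta}\bigl(Y_1\sqcup Y_2\bigr)=\sum_{\beta_1\cup\beta_2=\beta,\;\delta_1\cup\delta_2=\delta} d_{\varnothing,\beta_1,\delta_1}(Y_1,D_1,E_1)\,d_{\varnothing,\beta_2,\delta_2}(Y_2,D_2,E_2),
\end{equation*}
i.e.\ $T_{\varnothing}(Y_1\sqcup Y_2)=T_{\varnothing}(Y_1)\,T_{\varnothing}(Y_2)$. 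Combined with Proposition~\ref{deg formula}, rewritten as $T_{\varnothing}(Y_0)\,T_{\varnothing}(Y_3)=T_{\varnothing}(Y_1)\,T_{\varnothing}(Y_2)$ on every double point relation, taking logarithms shows that $\log T_{\varnothing}$ is additive under disjoint union and vanishes on the subgroup $\cQ_{n,1,r}(k)$ of double point relations. Hence it descends to a group homomorphism $L:\nu_{n,1,r}(k)\to\QQ[[y_\beta,z_\delta]]$ and extends $\QQ$-linearly to $\nu_{n,1,r}(k)\otimes_\ZZ\QQ$.

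To extract the $A_k$, I invoke the structural results of Section~\ref{sect:algcobordism}. Corollary~\ref{vdim} yields $\dim_\QQ\nu_{n,1,r}(k)\otimes_\ZZ\QQ=m$, equal to the number of degree $n$ Chern monomials in $T_Y$, $\cO(D)$ and $E$. Theorem~\ref{isom} identifies $\nu_{n,1,r}(k)$ with $\omega_{n,1,r}(k)$, and Theorem~\ref{chern} then says that the Chern-number evaluation
\begin{equation*}
\nu_{n,1,r}(k)\otimes_\ZZ\QQ\,\times\,\cC_{n,1,r}\;\longrightarrow\;\QQ,\qquad \bigl([Y,D,E],\Theta\bigr)\longmapsto \Theta\bigl(c_i(T_Y),c_1(\cO(D)),c_i(E)\bigr),
\end{equation*}
is a perfect pairing. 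Let $\{\eta_1,\dots,\eta_m\}$ be the basis of $\nu_{n,1,r}(k)\otimes_\ZZ\QQ$ dual to $\{\Theta_1,\dots,\Theta_m\}$, and set $A_k:=\exp\bigl(L(\eta_k)\bigr)\in 1+\fm$. For any triple, the basis expansion
\begin{equation*}
[Y,D,E]=\sum_{k=1}^m \Theta_k\bigl(c_i(T_Y),c_1(\cO(D)),c_i(E)\bigr)\,\eta_k
\end{equation*}
in $\nu_{n,1,r}(k)\otimes_\ZZ\QQ$, after applying $L$ and exponentiating, gives the identity~\eqref{eq:Ai}. The final assertion that the coefficients of $T_{\varnothing}$ are cobordism invariants follows immediately, since the right-hand side depends only on Chern numbers.

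The main technical obstacle is the disjoint-union multiplicativity of $T_{\varnothing}$ at the level of the counts $d_{\varnothing,\beta,\delta}$: one has to match the decomposition of the Hilbert scheme, the splitting of the tautological bundle, and the $k$-very-ampleness hypothesis on $E_1\oplus E_2$ (which is preserved because any length $k+1$ subscheme of $Y_1\sqcup Y_2$ splits into pieces supported on the two components) with the combinatorial distribution of $\beta$ and $\delta$, in a manner parallel to Lemma~\ref{1-cycle}. Once this bookkeeping is in place, everything else is the formal dual-basis manipulation described above.
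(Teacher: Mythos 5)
Your proposal is correct and follows essentially the same route as the paper: Proposition~\ref{deg formula} makes $T_{\varnothing}$ a multiplicative homomorphism on the cobordism group, and the identification of $\nu_{n,1,r}(k)\otimes_\ZZ\QQ$ with the Chern-number dual space (Theorem~\ref{isom}, Corollary~\ref{vdim}, Theorem~\ref{chern}) then yields the product form via the dual basis, exactly as in the paper's (terser) argument. Your explicit check of disjoint-union multiplicativity of $d_{\varnothing,\beta,\delta}$ and the $\log/\exp$ bookkeeping are details the paper leaves implicit, not a different method.
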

\begin{proof}
Proposition \ref{deg formula} shows $T_{\varnothing}$  defines a multiplicative homomorphism from $\nu_{n, 1, r}$ to $\QQ[[y_{\beta}, z_{\delta}]]^{\times}$. Recall $\nu_{n, 1, r}\otimes \QQ$ is isomorphic to the finite-dimensional $\QQ$-vector space of graded degree $n$ polynomials in the Chern classes of 
$\{c_i(T_Y)\}_{i=0}^{n}, c_1(\cO(D)),\{c_i(E)\}_{i=0}^{r}$ (Corollary \ref{vdim}) by sending $[Y,D,E]$ to its Chern numbers. Therefore $T_{\varnothing}$ factors through this vector space and 
$$T_{\varnothing}(Y,D,E)
= \prod_{k=1}^m A_k^{\int_Y\Theta_k(c_i(T_Y), c_1(D), c_i(E))}.$$
The rest of the proof follows from Proposition \ref{Pi}. 
\end{proof}

Even for nodal curves on surfaces, the number of coefficients in universal polynomials grow very fast with the number of conditions which makes it difficult to compute them from special cases.
The following expression, inspired by \cite{KP},  reduces the computation of universal polynomials to finding linear polynomials. 

\begin{coro}\label{exp} 
Let $exp(R)$ be the formal power series 
$\sum_{n=0}^{\infty} \frac{R^n}{n!}$.
The generating series $T_{\varnothing}(Y,D,E)$ has an exponential description
 $$T_{\varnothing}(Y,D,E)=\exp{\left(\sum_{\beta, \delta} \frac{a_{\beta, \delta} y_{\beta}z_{\delta}}{\#Aut(\beta)\cdot\#Aut(\delta)}\right)}$$ so that every  $a_{\beta, \delta}$ is a homogeneous linear polynomial in Chern numbers of $Y$, $D$ and $E$ and $a_{\varnothing, \varnothing}=0$.
\end{coro}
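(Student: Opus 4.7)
The plan is to deduce this directly from Theorem \ref{thm:Ai} by taking logarithms. Starting from
\[
T_{\varnothing}(Y,D,E)=\prod_{k=1}^m A_k^{\Theta_k(c_i(T_Y),c_1(D),c_i(E))},
\]
I take the formal logarithm of both sides. Because $d_{\varnothing,\varnothing,\varnothing}(Y,D,E)=1$ by the remark following the definition of $T_{\varnothing}$, the power series $T_{\varnothing}(Y,D,E)\in\QQ[[y_\beta,z_\delta]]^\times$ has constant term $1$, so $\log T_{\varnothing}(Y,D,E)$ is a well-defined element of $\QQ[[y_\beta,z_\delta]]$ with vanishing constant term; in particular each $A_k$ has constant term $1$ and $\log A_k$ makes sense. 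This yields
\[
\log T_{\varnothing}(Y,D,E)=\sum_{k=1}^m \Theta_k(c_i(T_Y),c_1(D),c_i(E))\cdot \log A_k.
\]

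Next I would expand each $\log A_k$ as a power series in the formal variables $y_\beta,z_\delta$. After renormalizing the coefficients to absorb the combinatorial factor, write
\[
\log A_k=\sum_{\beta,\delta}\frac{b_{k,\beta,\delta}}{\#\mathrm{Aut}(\beta)\cdot\#\mathrm{Aut}(\delta)}\,y_\beta z_\delta,\qquad b_{k,\varnothing,\varnothing}=0,
\]
where $b_{k,\beta,\delta}\in\QQ$ and the $\#\mathrm{Aut}$ factors account for the symmetry of the unordered collections $\beta,\delta$ (this is just a repackaging of the coefficients and carries no content). Swapping the order of summation,
\[
\log T_{\varnothing}(Y,D,E)=\sum_{\beta,\delta}\frac{y_\beta z_\delta}{\#\mathrm{Aut}(\beta)\cdot\#\mathrm{Aut}(\delta)}\sum_{k=1}^m b_{k,\beta,\delta}\,\Theta_k(c_i(T_Y),c_1(D),c_i(E)).
\]
Setting $a_{\beta,\delta}:=\sum_{k=1}^m b_{k,\beta,\delta}\,\Theta_k$ produces the desired expression upon exponentiating, and since $\{\Theta_k\}$ is a basis of the space of degree-$n$ monomials in the relevant Chern classes, $a_{\beta,\delta}$ is a $\QQ$-linear combination of Chern numbers of $Y$, $D$, and $E$, i.e.\ a linear polynomial in Chern numbers. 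The vanishing $a_{\varnothing,\varnothing}=0$ follows from $b_{k,\varnothing,\varnothing}=0$.

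There is essentially no obstacle: the content is entirely supplied by Theorem \ref{thm:Ai}, and the rest is the standard move of passing to the logarithm of a multiplicative generating series. The only mild bookkeeping step is choosing the normalization of the coefficients of $\log A_k$ so that they match the $1/(\#\mathrm{Aut}(\beta)\cdot\#\mathrm{Aut}(\delta))$ convention, and verifying the constant term vanishes, both of which are immediate.
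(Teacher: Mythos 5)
Your proposal is correct and follows essentially the same route as the paper: take the logarithm of the product formula in Theorem \ref{thm:Ai}, use $d_{\varnothing,\varnothing,\varnothing}(Y,D,E)=1$ to see the constant terms are $1$, expand each $\ln A_k$, and collect the coefficient of $y_\beta z_\delta$ into $a_{\beta,\delta}=\sum_k b_{k,\beta,\delta}\Theta_k$. The only point you pass over with ``in particular'' is why each individual $A_k$ has constant term $1$: the paper justifies this by noting that the Chern-number vectors of actual triples $(Y,D,E)$ form a full-rank lattice in the dual of the span of $\{\Theta_k\}$, so the relation $\prod_k(\text{const.\ term of }A_k)^{\Theta_k(Y,D,E)}=1$ for all such triples forces every constant term to equal $1$; you should include this (or an equivalent normalization of the $A_k$) to make the step airtight.
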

\begin{proof}
Let $\ln(R)$ to be the formal power series 
$\sum_{n=1}^{\infty} (-1)^{n+1}\frac{(R-1)^n}{n}$ (the Taylor series of $\ln$ at $1$).
It is the inverse function of $exp(R)$. 
Because $d_{\varnothing, \varnothing, \varnothing}(Y,D,E)=1$ for all possible $Y$, $D$, and $E$ and their Chern numbers form a full rank lattice in the vector space of all Chern numbers, the leading coefficient of  every $A_i$ is $1$. 
Therefore every $\ln A_i$ is a series of $y_{\beta}$ and $z_{\delta}$ without constant term.

Apply $\ln$ to two sides of Equation \eqref{eq:Ai} with $\alpha= \varnothing$ gives
$$ \ln (T_{\varnothing}(Y,D,E))
= \sum_{k=1}^m \int_Y\Theta_k(c_i(T_Y), c_1(D), c_i(E) ) \cdot \ln A_k.$$
 
The coefficient of $y_{\beta}z_{\delta}$ on the right hand side is the sum of $\int_Y\Theta_k$ multiplied by the coefficient of $y_{\beta}z_{\delta}$ in $\ln A_k$, which is linear in  $\int_Y\Theta_k$ and thus linear in the Chern numbers. 
The coefficient equals $\frac{a_{\beta, \delta} }{\#Aut(\beta)\cdot\#Aut(\delta)}$ therefore $a_{\beta, \delta}$ is also linear in the Chern numbers. 
\end{proof}

\begin{rem}
When there is no tangency conditions, for nodal curves on surfaces with at most eight nodes  \cite{KP} and nodal curves on $\PP^2$ with at most $14$ nodes \cite{BlockComputing}, the coefficients of $a_{\varnothing, \delta}$ are all integers. 
The general case is still open. 
\end{rem}

The existence of universal polynomials  can be easily deduced now. 
\begin{proof}[proof of Theorem \ref{thm:univ}] 
For a dimension $n-r-1$ ICIS $s$, let $m_s$ be the multiplicity of $s$ in $\alpha$. 
Equation \ref{eq:Ai} can be written as 
\begin{align}\label{ms} T_{\alpha}(Y,D,E)
= \prod_{k=1}^m A_k^{\int_Y\Theta_k(c_i(T_Y), c_1(D), c_i(E))}\cdot \prod_{s} P_{s}^{m_s}. \end{align}
where the second product is over all dimension $n-r-1$ ICIS. 

By Proposition \ref{prop:daL}, when $E$ is $(N(\alpha)+N(\beta)+N(\delta)+n-r+1)$-very ample the number of singular varieties with singular type $\delta$ satisfying tangency conditions $(\alpha, \beta)$ with $D$ is $d_{\alpha, \beta, \delta}(Y,D,E)$,  the coefficient of $y_{\beta}z_{\delta}$ in $T_{\alpha}(Y,D,E)$.
Because $\int_Y\Theta_k(c_i(T_Y), c_1(D), c_i(E))$ and $m_s$ are rational numbers, take binomial expansion of Equation \eqref{ms} then one can write  this coefficient  as a degree $|\beta|+|\delta|$ polynomial in Chern numbers and $m_s$, since $\int_Y\Theta_k(c_i(T_Y), c_1(D), c_i(E))$ is a linear combination of Chern numbers. 
\end{proof}

\begin{eg}\label{leading}

The leading terms of the universal polynomials can be computed as follows. 
Consider curves in the linear system of a line bundle $L$ on a surface $S$. 
From \cite{KP}, the universal polynomial of curves with a node ($\cA_1$-singularity) is 
$$T_{0,0,\cA_1} =3L^2+2LK_S+c_2(S).$$ 
The universal polynomial of curves with an ordinary triple point is
$$T_{0,0,\cD_4} =15L^2+20LK_S+5c_1(S)^2+5c_2(S).$$
By De Jonqui\`ere's Formula or using discriminant, if $S$ contains a line $D\cong \PP^1$, then the number of curves in $|L|$  tangent to $D$ is $2deg(L|_D)-2$. 
Then by Corollary \ref{exp}, the universal polynomial of curves in $|L|$ which has $a$ nodes and $b$ ordinary triple points and tangent to $D$ at $c$ points has leading term
$$(3L^2+2LK_S+c_2(S))^a (15L^2+20LK_S+5c_1(S)^2+5c_2(S))^b(2deg(L|_D)-2)^c/a!b!c!.$$
\end{eg}

The following theorem proves the polynomial property of the generalization of Caporaso-Harris invariants on smooth surfaces. The ampleness condition is weakened. 
\begin{theo}\label{surface} Denote  the tangency conditions of curves on surfaces by $\alpha = (\alpha_1, \alpha_2, \ldots)$ and $\beta = (0, \beta_2, \beta_3,\ldots)$ as in Caporaso-Harris invariants (see Section \ref{sect:non-induced}).  For fixed $r\in \NN$ and $\beta$, there is a degree $|\beta|+r$ polynomial $T_{\beta,r\cA_1}$ such that for  any smooth surface $S$, any smooth divisor $D$ and line bundle $L$ on $S$ and $\alpha$, the number of $r$-nodal curves in $|L|$ satisfying  tangency conditions $(\alpha,\beta)$ with $D$ is given by a polynomial $T_{\beta,r\cA_1}$ in sever Chern numbers $\fourtop, D^2, deg(L|_D), DK_S$ and $\alpha_i$ if $L$ is $(3r+\sum i(\alpha_i+\beta_i)+2)$-very ample. 
Furthermore, the generating series
$$T_{\alpha}(S,D,L) := \sum_{\beta, r} T_{\beta,r\cA_1}y_{\beta}z_{\cA_1}^r= A_1^{L^2}A_2^{LK_S}A_3^{c_1(S)^2}A_4^{c_2(S)}A_5^{D^2}A_6^{deg(L|_D)}A_7^{DK_S}\prod_{i}P_i^{\alpha_i}. $$
where $A_i\in \QQ[z_{\cA_1}]^{\times}$ for $1\leq i\leq 4$ and  $A_i\in \QQ[y_{\beta}, z_{\cA_1}]^{\times}$ for $5\leq i\leq 7$.  
\end{theo}

\begin{proof}
If one redefines $\xi(node) = \Spec \CC[x,y]/(x,y)^2$, curves on $S$ containing a subscheme isomorphic to $\xi(node)$ generically have node at the support of the subscheme.  
If the singularity is not node, then the curve must contain a subscheme isomorphic to  $\Spec \CC[x,y]/(y^2+ (x,y)^3)$ at the same point.

Run Section \ref{sect:deg} again with the new definitions will give the same degeneration formulae (Corollary \ref{goalpha}, \ref{d=dd2}). 
Therefore the universal polynomials and generating series still exist and satisfy same equations. 
But $L$ only has to be $(3r+\sum i(\alpha_i+\beta_i)+2)$-very ample since the length of $\xi(node)$ is $3$ and the invariants for tangency conditions are given in Example \ref{multiplicity}. 

There are seven Chern numbers of a line bundle $L$ and  a divisor $D$ on a surface $S$: 
$$\fourtop, D^2, deg(L|_D), DK_S.$$
Since the constant term of $T_{\varnothing}$ is $1$ and the Chern numbers of all possible $S$, sufficiently ample $L$, $D$ spans $\QQ^7$, $A_i\in \QQ[y_{\beta}, z_{\cA_1}]^{\times}$. 
If one lets $D$ be the zero divisor, then $\Lambda^{\Omega}_{\alpha,\beta, \delta}(Y,D,E)=0$ if $\alpha$ or $\beta$ is not empty. 
Therefore $A_i \in \QQ[z_{\cA_1}]^{\times}$ for $1\leq i\leq 4$.

\end{proof}

\begin{rem}

These $A_1$, $A_2$, $A_3$, $A_4$ are exactly   the $A_i$ in \cite[Theorem~1.3]{Tz}. 

The number of smooth curves with tangency conditions $(0, \beta)$ is given by the coefficient of $y_{\beta}$ in   $T_{\varnothing}(S,D,L)$. 
This is the same as counting the number of sections in $L|_D$ on $D$ with given vanishing multiplicities and is given by De Jonqui\`ere's Formula \cite[p.359]{ACGH}. 
Therefore  coefficients of $y_{\beta}$  in $A_5$, $A_6$ and $A_7$  are totally determined by De Jonqui\`ere's Formula for any $\beta$. 
\end{rem}

\section{Formulas}
The key idea of this article is expressing the number of singular varieties as  intersection numbers on Hilbert schemes. 
This approach can prove the existence of universal polynomials, but computing them from this point of view seems unfeasible because  the cycle $Y(\alpha,\beta, \delta)$  can be  very singular. 
The known formulae come from wide range of methods and usually only work in special situations. 
In the framework of this article, those results contribute to coefficients of generating series $T_{\alpha}(Y,D,E)$ or equivalently to universal polynomials.
If there are enough special cases to generate the algebraic cobordism group, all coefficients  can be computed. 

For general K3 surfaces and primitive line bundles, the generating series of nodal curves can be expressed in modular forms and quasi-modular forms \cite[Theorem~1.1]{Tz}. So it is natural to ask if other generating series (eg. on CY3 or log canonical manifold) will be in modular form and quasi-modular forms. 
Even if not, then is it possible to find a closed form of generating series? 
So far we can not answer these questions. 
More computations will be helpful to support or disprove such conjectures.

The currently known formulae are listed below.
In Example \ref{leading}, we compute leading terms of universal polynomials from the case of one singular point or one tangency point. 
The same method works in general by Corollary \ref{exp} so the formulae below do not only apply to their original setting but also contribute to infinitely many  universal polynomials. 


\subsection{Nodal curves on surfaces}

\begin{itemize}

\item Caporaso-Harris' recursive formula \cite{CH} determines the numbers of plane nodal curves with arbitrary tangency conditions with a given line without any ampleness assumption. 

\item Vakil \cite{Vakil} proved a similar recursive formula for nodal curves on Hirzebruch surfaces. $T_{\alpha, \beta, r\cA_1}$ can be determined from Caporaso-Harris' and Vakil's formulas. 

\item The universal polynomial of $r$-nodal curves on surfaces can be  determined by combining any two of  \cite{BL}, \cite{CH} and \cite{Vakil}, or by \cite{KST}. The explicit formulas for $r\leq 6 $ and $r \leq 8$ were found by Vainsencher and Kleiman-Piene  (\cite{Va}, \cite{KP}).
Block computed the equivalent form as Corollary \ref{exp} for $r\leq 14$ \cite{BlockComputing}. 
Kool, Shende and Thomas \cite{KST} provided an algorithm for any $r$. 

\item Block \cite{Blockrel} used tropical geometry to prove the Caporaso-Harris invariants are polynomials when degree of the curves are relatively large. He computed $T_{\alpha, \beta, r\cA_1}$ for $(\PP^2, \cO(d))$ when $1 \leq r\leq  6$ and any $\alpha$, $\beta$. 

\item In \cite{TzComputing}, we computed the first dozens of coefficients for $A_i$ and $P_i$ in Theorem \ref{surface}. Some properties are proven and conjectured there. 
\end{itemize}

\subsection{Curves with arbitrary singularity on surfaces}

\begin{itemize}
\item Kleiman and Piene \cite{KP}  computed the universal polynomials for many singularities of low codimensions 
such as $\delta=(D_4)$, $(D_4, A_1)$, $(D_4, A_1, A_1)$, $(D_4, A_1, A_1, A_1)$, $(D_6)$, $(D_6, A_1)$ and  $(E_7)$. 

\item When  there is only one singular point, Kerner \cite{Ke} found an algorithm to enumerate the number of plane curves with one fixed topological type singularity, provided that the normal form is known. 

\item If there is no singularity condition, then it is equivalent to  finding sections of line bundles on smooth curves which vanish at points of given  multiplicities. The answer is given by De Jonqui\`ere's Formula \cite[p.359]{ACGH}.  

\item Recently Basu and Mukherjee \cite{BM1, BM2} used topological methods to compute the number of plane curves with one or two singularities when the first singularity  is $ADE$-singularity or ordinary fourfold points and the additional one is a node. 
\end{itemize}

\subsection{General Dimension}

\begin{itemize}
\item Vainsencher \cite{Vain_six} computed the number hypersurfaces with less or equal to six double points. 

\item In a series of paper  (\cite{Kaz_thom, Kaz} and a lot more on line),  M. Kazarian showed Legendre characteristic classes  should give universal polynomials for singular hypersurfaces, and they are independent of the dimension! The explicit formulas of Legendre characteristic classes when the codimension is less or equal to six are listed in \cite{Kaz_thom}. 
They agree with Kleiman and Piene's results on surfaces. The transversality conditions and the domain of universality are quite subtle. 
\end{itemize}

\bibliography{mybib}
\bibliographystyle{amsplain}
\end{document}